\documentclass[11pt]{amsart}
\usepackage{amsmath,amsfonts,amssymb,amsthm,bm,mathtools,enumitem}
\usepackage[hidelinks]{hyperref}
\usepackage{fullpage,epigraph}
\usepackage[dvipsnames]{xcolor}
\usepackage[capitalize]{cleveref}
\usepackage[color=Purple!50!white,textsize=tiny]{todonotes}
\usepackage[numbers,sort]{natbib}
\usepackage{tikz,mathdots}
\tikzset{vert/.style={circle, fill=black, inner sep=1.5pt}}

\newtheorem{theorem}{Theorem}[section]
\newtheorem*{theorem*}{Theorem}
\newtheorem{lemma}[theorem]{Lemma}
\newtheorem{proposition}[theorem]{Proposition}
\newtheorem{corollary}[theorem]{Corollary}

\theoremstyle{definition}
\newtheorem{definition}[theorem]{Definition}

\crefname{equation}{equation}{equations}
\crefname{lem}{Lemma}{Lemmas}
\crefname{thm}{Theorem}{Theorems}

\newlist{lemenum}{enumerate}{1}
\setlist[lemenum]{label=(\alph*), ref=\thelemma(\alph*)}
\crefalias{lemenumi}{lemma} 

\AddToHook{env/lemma/begin}{\crefalias{theorem}{lemma}}
\AddToHook{env/proposition/begin}{\crefalias{theorem}{proposition}}
\AddToHook{env/corollary/begin}{\crefalias{theorem}{corollary}}
\AddToHook{env/conjecture/begin}{\crefalias{theorem}{conjecture}}
\AddToHook{env/claim/begin}{\crefalias{theorem}{claim}}
\AddToHook{env/question/begin}{\crefalias{theorem}{question}} 
\AddToHook{env/definition/begin}{\crefalias{theorem}{definition}} 
\AddToHook{env/remark/begin}{\crefalias{theorem}{remark}} 
\AddToHook{env/example/begin}{\crefalias{theorem}{example}} 

\DeclareMathOperator\pr{Pr}
\DeclareMathOperator\ext{ext}

\newcommand\up[1]{^{(#1)}}
\newcommand\wh[1]{\widehat{#1}}
\newcommand\wt[1]{\widetilde{#1}}
\newcommand\ab[1]{\lvert#1\rvert}

\newcommand{\flo}[1]{\lfloor #1 \rfloor}
\newcommand{\bflo}[1]{\left\lfloor #1 \right\rfloor}

\newcommand\E{\mathbb{E}}

\newcommand{\Z}{\mathbf{Z}}
\newcommand{\Q}{\mathbf{Q}}
\newcommand{\A}{\mathcal{A}}
\newcommand{\G}{\mathcal{G}}
\newcommand{\K}{\mathcal{K}}

\newcommand\en{\mathcal{E}}
\newcommand\largereal{R}

\let\leq\leqslant
\let\geq\geqslant

\title{Finding blowups one vertex at a time}
\author{Jacob Fox}
\author{Yuval Wigderson}
\author{Yunkun Zhou}
\date{}
\thanks{JF: Department of Mathematics, Stanford University, Stanford, CA 94305. Email: \texttt{jacobfox@stanford.edu}. Research
supported by NSF awards DMS-2154129 and DMS-2452737.\\
\indent YW: Institute for Theoretical Studies, ETH Z\"urich, 8006 Z\"urich, Switzerland. Email: \texttt{yuval.wigderson@eth-its. ethz.ch}. Research supported by Dr.\ Max R\"ossler, the Walter Haefner Foundation, and the ETH Z\"urich Foundation.\\
\indent YZ: Email: \texttt{yunkunzhou@alumni.stanford.edu}.}

\begin{document}
\begin{abstract}
	An influential theorem of Nikiforov states that if an $N$-vertex graph $G$ contains at least $\gamma N^h$ copies of some fixed $h$-vertex graph $H$, then $G$ contains an $H$-blowup of order $c_H(\gamma)\log N$. We provide a new proof of this theorem, which in particular improves the best known bound on the constant $c_H(\gamma)$. In contrast to previous proofs, our proof is iterative, finding the blowup one vertex at a time. 
\end{abstract}

\maketitle

\section{Introduction}
Given an integer $t$ and a graph $H$, its \emph{$t$-blowup} $H[t]$ is the graph obtained from $H$ by replacing every vertex by an independent set of order $t$, and every edge by a complete bipartite graph between the two corresponding independent sets. Blowups occupy a central role in extremal graph theory, and many of the field's central theorems can be phrased as statements about blowups, including the Erd\H os--Stone theorem \cite{MR0018807} and the K\H ov\'ari--S\'os--Tur\'an theorem \cite{MR0065617}, which states that if an $N$-vertex graph contains at least $\gamma N^2$ copies of $K_2$ (i.e.\ edges), then it contains a copy of $K_2[t]$, for $t = \Omega_\gamma(\log N)$. The following remarkable and influential theorem of Nikiforov \cite{MR2409174,MR2398823} extends this to yield a large blowup of any graph.
\begin{theorem}\label{thm:nikiforov}
	Let $H$ be an $h$-vertex graph and let $\gamma>0$. If $N$ is sufficiently large, then every $N$-vertex graph with at least $\gamma N^h$ copies of $H$ contains an $H$-blowup $H[t]$, where
	\[
		t \geq c_H(\gamma)\log N,
	\]
	and $c_H(\gamma)>0$ is a constant depending only on $\gamma$ and $H$. 
\end{theorem}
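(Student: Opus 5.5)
We will prove the theorem by building the blowup one vertex of $H$ at a time. We first reduce to a partite, homomorphism-counting setting. Order $V(H)=\{1,\dots,h\}$. Take a uniformly random partition $V(G)=V_1\cup\dots\cup V_h$; with positive probability at least $h^{-h}\gamma N^h$ of the copies of $H$ are \emph{canonical}, meaning vertex $i$ of $H$ is mapped into $V_i$. So we may assume there are at least $\gamma' N^h$ canonical copies, with $\gamma'=\gamma h^{-h}$. It then suffices to find sets $T_i\subseteq V_i$ with $|T_i|=t$ such that $T_i$ and $T_j$ are complete to each other whenever $ij\in E(H)$. Non-edges need no condition, since $H[t]$ only asks for a (not necessarily induced) subgraph.

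The induction is on $k$, the number of parts already fixed. We maintain sets $T_1,\dots,T_k$ with $|T_i|=t$ that are pairwise complete according to $H$. In addition we maintain a weighted count of extensions to the remaining parts. Define $F_k(x_{k+1},\dots,x_h)$ as the indicator that each $x_j$ is adjacent to all of $T_i$ for every edge $ij$ with $i\le k<j$, and that $x_{k+1},\dots,x_h$ span a canonical copy of the induced sub-structure $H[\{k+1,\dots,h\}]$. The invariant is
\[
\sum F_k \;\geq\; \gamma_k N^{h-k}, \qquad \gamma_k = \gamma'^{\,C_k t}\ \text{(or } \gamma_k\ge N^{-\epsilon}\text{)}.
\]
Keeping the density at least $N^{-\epsilon}$ with $\epsilon$ small is enough for the later steps.

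The step from $k$ to $k+1$ is a dependent-random-choice / K\H ov\'ari--S\'os--Tur\'an argument. For each $v\in V_{k+1}$, let $w(v)$ be the number of completions of $v$ to the remaining parts, i.e. $\sum_{x_{k+2},\dots,x_h}F_k(v,x_{k+2},\dots)$. Pick $t$ vertices $v_1,\dots,v_t\in V_{k+1}$ independently, each with probability proportional to $w(v)$; setting $T_{k+1}=\{v_1,\dots,v_t\}$ fixes part $k+1$. The new count $\sum F_{k+1}$ counts tuples $(x_{k+2},\dots,x_h)$ that are compatible with all $v_s$ simultaneously. By Hölder/Jensen, its expectation is
\[
\sum_{\mathbf{x}} d(\mathbf{x})^t \;\ge\; N^{h-k-1}\bigl(\text{avg } d\bigr)^t,
\]
where $d(\mathbf{x})$ is the fraction of $V_{k+1}$ compatible with $\mathbf{x}$. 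Renormalizing gives $\gamma_{k+1}\gtrsim \gamma_k^{t}$, or $\gamma_k^{O(1)\cdot t}$ after accounting for the weights. The same convexity gives $\mathbb{E}[\cdot]\ge \gamma_k^{t}N^{h-k-1}$ when the $v_s$ are taken uniformly among vertices of large $w$. A degenerate outcome with repeated $v_s$ has probability $O(t^2/N)$, which is negligible.

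The main obstacle is the compounding of exponents. Iterating $\gamma_{k+1}\approx\gamma_k^{t}$ gives $\gamma_h\approx\gamma'^{t^h}$, and requiring this to be at least $N^{-1}$ only yields $t\sim(\log N)^{1/h}$, not $\log N$. To fix this, I will not let the exponent multiply. When choosing $T_{k+1}$, I only need compatibility with the parts $j>k+1$ adjacent to $k+1$, and the loss should be measured \emph{additively in the exponent}. Concretely, track $\log(1/\gamma_k)$ and show that it increases by only $O_H(t\log(1/\gamma'))$ per step. Applying Hölder to the full $h$-partite homomorphism count, in the style of Sidorenko-type inequalities for the number of $K_{t,\dots}$-extensions, gives
\[
\E\Bigl[\sum F_{k+1}\Bigr] \;\ge\; N^{h-k-1}\cdot \frac{(\sum F_k)^t}{N^{(h-k)t}}\cdot N^{t-\dots}.
\]
Here the count of $H$-copies enters as $\gamma'$ raised to a power of order $t$, not $\gamma_k$ raised to the power $t$. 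I expect to prove this by writing $\sum F_{k+1}$, summed over $t$-tuples in $V_{k+1}$, as $\sum_{\mathbf{x}}\deg(\mathbf{x})^t$ over tuples $\mathbf{x}$ carrying the original density $\gamma'$ weighted by previous constraints. The point is that the previous constraints are absorbed as weights rather than re-powered. Granting this, $\gamma_h\ge \gamma'^{O_H(t)}$. Choosing $t=c\log N$ with $c=c_H(\gamma)$ small makes $\gamma_h N^{0}>0$ at the final step, i.e. the final count of compatible completions is positive. That yields the sets $T_1,\dots,T_h$ and hence $H[t]$.
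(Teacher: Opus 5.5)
Your partite reduction is fine (it is essentially \cref{prop:monotonicity reduction}), and you correctly diagnose why the naive iteration only gives $t\approx(\log N)^{1/(h-1)}$. But the step you ``grant'' is the whole difficulty, and nothing in the proposal supports it. The displayed H\"older inequality is not even specified (``$N^{t-\dots}$''), and ``absorbing earlier constraints as weights'' is not an argument.

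Concretely, after $T_1$ is fixed the surviving tuples live in $A_j=N(T_1)\cap V_j$. What the next convexity or K\H{o}v\'ari--S\'os--Tur\'an step sees is the density of copies of $H[\{2,\dots,h\}]$ relative to these shrunken sets, not the count normalized by $N^{h-1}$. That density can collapse while your invariant stays healthy. Take $H=K_3$ and split $V_2=X_2\sqcup Y_2$, $V_3=X_3\sqcup Y_3$ into halves. Put complete bipartite graphs on $(X_2,Y_3)$ and $(Y_2,X_3)$ and nothing else between $V_2$ and $V_3$. Join each $v\in V_1$ to all of $X_2\cup X_3$, and to each vertex of $Y_2\cup Y_3$ independently with probability $\gamma$. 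There are about $\gamma N^3/2$ triangles. With high probability, for \emph{every} $t$-set $T_1$ (with $t$ a small multiple of $\log N$), one gets $\sum F_1\approx\gamma^tN^2/2$, exactly as your invariant demands, yet $d(A_2,A_3)\approx 2\gamma^t$. So Jensen at the next step yields only $\gamma^{\Theta(t^2)}$, and weighting the choice of $T_1$ does not help. Here a $K_3[t]$ is easy to find, but only after passing to a denser sub-pair such as $(X_2,Y_3)$, which your scheme never does.

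For the same reason, ``keeping the density at least $N^{-\epsilon}$ is enough'' is false: $N^{-\epsilon}$ raised to the power $t=c\log N$ is superpolynomially small. A polynomial loss in $N$ is affordable for set sizes, but the density that gets raised to the $t$-th power must stay above a constant depending only on $\gamma$ and $H$. H\"older/Sidorenko-type bounds control counts normalized by the full parts, and cannot prevent this negative correlation.

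Moreover, the bound you claim, $\gamma_h\ge(\gamma')^{O_H(t)}$ with $t=\Theta(\log N)$, would give $c_H(\gamma)=\Omega_H(1/\log\frac1\gamma)$ for every $H$, matching the random-graph upper bound. As the paper notes, this is open for every $H$ containing a triangle (it is known only for triangle-free $H$), so it cannot come from a routine convexity argument.

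The paper obtains the statement as a consequence of \cref{thm:main}, and supplies the missing density control in a weaker form. After reducing to cliques, it uses \cref{lem:regularize-cliques} to get a $k$-mixed graph with $\tau_C(\G)\ge\tau$ and $d(\A)\tau\ge\gamma/4$. It then picks vertices of $C$ one at a time, each time passing to a strictly $\largereal$-balanced subgraph, i.e.\ one maximizing $\en_\largereal(\A)=\ab{A_1}\cdots\ab{A_k}d(\A)^\largereal$. Balancedness forces $d(N_1(c^*),\dots,N_k(c^*))\ge d(\A)-\alpha$ for a vertex $c^*$ with many extensions, since otherwise some complementary piece $\A^\sigma$ would have larger energy (\cref{lem:one step}). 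This additive loss is affordable only for about $\tau\log n/\log\frac1p$ steps. So only the $C$-part is built this way, and the other parts come from \cref{thm:kst} or induction on the clique size, giving a constant polynomial in $\gamma$. Some mechanism of this kind, controlling densities on the surviving sets rather than counts, is what your argument is missing.
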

By considering a random graph with edge density $\gamma^{1/e(H)}$, it is not hard to show that this result is best possible up to the value of $c_H(\gamma)$ for nonempty $H$.  Nikiforov's theorem has several important consequences, among them a short proof of the Bollob\'as--Erd\H os--Simonovits theorem \cite{MR335347,MR485528}. 

Nikiforov's original proof of \cref{thm:nikiforov} is very short and elegant, and the technique he introduced to prove it has since found many other applications (e.g.\ \cite{MR2959395,MR4149162}). Nevertheless, \cref{thm:nikiforov} occupies a bit of an unusual space in the theory, because several natural variants of it, though easy to conjecture, appear extremely difficult to prove. Most notably, the extension of \cref{thm:nikiforov} to hypergraphs remains an outstanding open problem, whose resolution would have a number of interesting applications. 

Because of this, there has been a great deal of interest in providing alternative proofs of \cref{thm:nikiforov}, in the hopes that new perspectives would shed greater light on the problem, and hopefully lead to techniques that are more widely applicable. In particular, there is a lot of interest in understanding the quantitative aspects of \cref{thm:nikiforov}, and specifically in determining the behavior $c_H(\gamma)$ as a function of $\gamma$ (this question was perhaps first explicitly raised in \cite[Problem 2]{MR2718688}). We henceforth use the notation $c_H(\gamma)$ to denote the optimal constant in \cref{thm:nikiforov}, that is, the supremum over all $c>0$ such that every $N$-vertex graph with at least $\gamma N^h$ copies of $H$ contains an $H$-blowup of size $(c-o(1))\log N$, where the $o(1) \to 0$ as $N \to \infty$.  The only known upper bound on $c_H(\gamma)$, from an elementary analysis of the random graph $G(n,\gamma^{1/e(H)})$, shows that $c_H(\gamma) = O_H(1/{\log \frac 1 \gamma})$. By contrast, Nikiforov's proof yields a polynomial lower bound, namely $c_H(\gamma)= \Omega_H(\gamma^h)$ for every\footnote{Nikiforov actually only claimed this bound \cite{MR2409174} for $H=K_h$, and the weaker bound $c_H(\gamma)=\Omega_H(\gamma^{h^2})$ for a general $h$-vertex graph \cite{MR2398823}. However, it is not hard to see that the result for cliques immediately implies the same bound for all graphs (see \cref{prop:monotonicity reduction} below).} $h$-vertex graph $H$. This is a rather wide gap, and reflects our poor understanding of the problem. In particular, if the logarithmic upper bound does not give the true value of $c_H(\gamma)$, meaning that there is a better construction than a random graph, this may explain the difficulty of extending \cref{thm:nikiforov} to other settings, as the heuristics driving the various conjectural extensions of Nikiforov's theorem often rely on the intuition coming from random graphs and hypergraphs. 

Motivated by the twin goals of finding new proofs of \cref{thm:nikiforov}, and of improving the bounds on $c_H(\gamma)$, many researchers have studied this problem. The first improvement was due to R\"odl and Schacht \cite{MR2993136}, who gave a new proof of \cref{thm:nikiforov}, and in so doing proved that\footnote{Here and in what follows, a $o(1)$ in the exponent represents a quantity tending to $0$ as $\gamma\to 0$.} $c_H(\gamma)\geq \gamma^{1+o(1)}$ for every graph $H$. While still polynomial, their bound is much better than Nikiforov's bound of $\gamma^h$, and in particular the exponent is independent of $H$. This bound was further slightly improved by Fox, Luo, and Wigderson \cite{MR4195582}, to $c_H(\gamma)\geq \gamma^{1-1/e(H)+o(1)}$, yielding a small power savings depending on $H$. Finally, Gir\~ao, Hunter, and Wigderson \cite{MR5007509} recently proved that $c_H(\gamma)=\Theta_H(1/{\log \frac 1 \gamma})$ whenever $H$ is triangle-free, thus completely resolving the problem in this case. 

Our main result in the present paper is a further polynomial improvement for all graphs.
\begin{theorem}\label{thm:main}
	For every graph $H$, we have $c_H(\gamma)\geq \gamma^{1/2 +o(1)}$. 

	More precisely, if $H$ has $h$ vertices and $N$ is sufficiently large with respect to $\gamma$, then every $N$-vertex graph with at least $\gamma N^h$ copies of $H$ contains an $H$-blowup $H[t]$, where
	\[
		t \geq \frac{\delta_H\sqrt \gamma }{(\log \frac 1 \gamma)^2}\log N
	\]
	and $\delta_H>0$ is a constant depending only on $H$. 
\end{theorem}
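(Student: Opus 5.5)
First, I reduce to $H=K_h$. The reduction mentioned in the footnote (\cref{prop:monotonicity reduction}) does this: a $K_h$-blowup contains an $H$-blowup of the same order. The cleanest route is to pass to an $h$-partite setting. Randomly partition $V(G)$ into parts $V_1,\dots,V_h$ and keep only the transversal copies of $H$ that respect a fixed labelling. Then delete every edge between $V_i$ and $V_j$ whenever $ij\notin E(H)$, and add all edges between such pairs. This yields an $h$-partite graph with at least $\gamma' N^h$ transversal copies of $K_h$, where $\gamma'=\Omega_H(\gamma)$. A $K_h$-blowup across the parts pulls back to an $H$-blowup in $G$. So it suffices to show the following: if an $h$-partite graph with parts of size $n$ has $\gamma n^h$ transversal $h$-cliques, then it contains $K_h[t]$ with $t\geq \delta\sqrt\gamma(\log\frac1\gamma)^{-2}\log n$.

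Following the title, I build the blowup one vertex at a time while maintaining large candidate sets. The state consists of already-chosen sets $T_1,\dots,T_h$, with $T_i\subseteq V_i$, that are pairwise complete, together with candidate sets $C_i\subseteq V_i$ of vertices complete to $\bigcup_{j\neq i}T_j$. The potential is the density $\gamma_{\mathrm{cur}}$ of transversal cliques in $C_1\times\dots\times C_h$, normalized by $\prod|C_i|$, together with the sizes $|C_i|$.

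At each step, pick the part $i$ to which we will add a vertex, cycling through the parts so that the $|T_i|$ stay balanced. Choose $v\in C_i$ to maximize a weighted combination: the clique count through $v$ when every other $C_j$ is shrunk to $C_j\cap N(v)$. By averaging, some $v$ keeps a $\gamma$-fraction of the clique mass. The naive analysis makes each $C_j$ shrink by a factor $\approx\gamma$, and that only gives $t\sim\log n/\log(1/\gamma)\cdot(\cdot)$ with poor density control. The key lemma I would prove is a \emph{dichotomy}. Either (a) a typical $v$ keeps $|C_j\cap N(v)|\geq \sqrt\gamma\,|C_j|$ for all $j$ while the clique density stays $\geq\gamma(1-1/\log\frac1\gamma)$-ish, or (b) there is a $v$ for which the clique density of the shrunken product \emph{increases} by a constant factor. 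The second case is a density increment: restricting to neighbourhoods where the cliques concentrate.

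Since the density cannot exceed $1$, increments happen only $O(\log\frac1\gamma)$ times in total, and each costs at most a factor $\gamma^{O(1)}$ in set sizes. This loses only $\gamma^{O(\log 1/\gamma)}$, which is affordable because $n$ is large. In the generic case (a), each added vertex costs a factor $\sqrt\gamma$-type shrinkage in one coordinate. More carefully, a multiplicative loss of $(1-c\sqrt\gamma)$ amortized per part leads to $t\approx\sqrt\gamma\log n$. The $(\log\frac1\gamma)^2$ loss comes from dyadic pigeonholing over degree classes and over the increment rounds.

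The main obstacle is establishing the dichotomy with the $\sqrt\gamma$ exponent. One must show that if neighbourhoods are small, meaning the degree of $v$ into $C_j$ is about $\gamma|C_j|$, then cliques concentrate, so the conditional density jumps. I would attempt this via a Cauchy--Schwarz/second-moment argument on the degree distribution. Write the clique count as $\sum_v \deg_{\mathrm{cl}}(v)$ and compare it with $\sum_v\prod_j|N(v)\cap C_j|$. The exponent $1/2$ should arise by balancing the loss in the $C_j$ against the gain in density, i.e.\ optimizing $x$ subject to $x\cdot(\gamma/x)\ge\gamma$ with both factors at least $\sqrt\gamma$. Finally, I would check that stopping when some $|C_j|$ drops below $n^{1/2}$ still leaves $t\geq\delta_H\sqrt\gamma(\log\frac1\gamma)^{-2}\log n$.
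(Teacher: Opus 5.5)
Your reduction to cliques matches \cref{prop:monotonicity reduction}. The core of the argument, however, has a genuine gap: the dichotomy you rely on is asserted rather than proved, and it is not what the available argument yields. Suppose restricting to $N(v)$ lowers the density by $\alpha$. The ``missing edges'' argument then only shows that one of the complementary pieces $(A^-,B^-)$, $(A^+,B^-)$, $(A^-,B^+)$ has density at least $p+\alpha\ab{A^+}\ab{B^+}/(3\ab X\ab Y)$. That piece is not the neighbourhood of any vertex, and its relative gain can be as small as about $\alpha\tau/p$. This is tiny, because keeping the density above $p/2$ over $s$ steps forces $\alpha\lesssim p/s$. So there is no constant-factor increment and no $O(\log\frac1\gamma)$ bound on the number of increments, and your size accounting collapses.

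The paper handles this with the energy $\en_\largereal(\A)=\prod_i\ab{A_i}\,d(\A)^\largereal$, where $\largereal=\Theta(\log n/\log\frac1p)$. After each step it passes to a strictly $\largereal$-balanced subgraph, so any such small gain would contradict maximality (\cref{lem:edge count in balanced}). This caps the density loss per chosen vertex at $\alpha\approx p/(\largereal\tau)$, while polynomial size losses are paid for by the $\largereal$-th power. Note also that the bottleneck is cumulative density erosion ($s\alpha\lesssim p$), not set sizes: a $\gamma^{O(1)}$ size loss per step would still permit $\log n/\log\frac1\gamma$ steps. Your claim that a $(1-c\sqrt\gamma)$ loss per part gives $t\approx\sqrt\gamma\log n$ does not correspond to a valid computation.

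Two further ingredients are missing. First, your cyclic, symmetric scheme has no hereditary invariant guaranteeing that a good next vertex exists. Maximizing the number of cliques through $v$ does not control the clique density of $C_i\times\prod_{j\neq i}(C_j\cap N(v))$, which is the quantity you track. The paper instead fixes one apex part $C$ and uses regularity and Paley--Zygmund (\cref{lem:regularize-cliques}) so that \emph{every} edge of $\A$ has at least $\tau\ab C$ extensions, with $\tau\geq\gamma^{2/(k+1)}$; this property is inherited by all restrictions. It then selects only $s$ vertices from $C$ and finishes inside the remaining parts with K\H ov\'ari--S\'os--Tur\'an (or induction on $k$), where $t\gg s$.

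Second, the exponent $\frac12$ comes from a \emph{role-switching} dichotomy, not a density increment. Either at least $m$ vertices of $C$ have degree at most $\mu\ab A$ into $A$ (or into $B$). Then these vertices together with $A$ and $B$ form an $(m,\mu)$-switcher in which $B$ acts as the apex with extension density about $\mu$, and \cref{cor:good for switcher} gives $s\approx\mu\log n$. Or $c^*$ can be chosen with $\ab{N(c^*)\cap A}\geq\mu\ab A$ and $\ab{N(c^*)\cap B}\geq\mu\ab B$. Subsampling these neighbourhoods to sizes $\eta\ab A,\eta\ab B$ with $\eta=\sqrt{\tau/p}$ then lets \cref{lemit:edge count strengthened} give a smaller $\alpha$, hence $s\approx(\gamma/\mu)\log n$ up to logarithms. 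Taking $\mu\approx\sqrt\gamma$ balances the two cases. Without these mechanisms your outline does not produce the stated bound; as you acknowledge, the unproved dichotomy is the entire difficulty.
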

In particular, our result is the first to obtain a power savings over the R\"odl--Schacht bound that is independent of $H$. 

However, we believe that our main contribution is not the precise bound $c_H(\gamma)\geq \gamma^{1/2+o(1)}$, but rather the new technique we introduce, and we are hopeful that this technique will prove useful in related settings. We provide a detailed overview of our technique in \cref{sec:sketch}, but let us briefly remark on the differences between our approach and previous ones. Like all of the previous proofs \cite{MR2409174,MR2398823,MR2993136,MR4195582,MR5007509}, we find the parts of the blowup $H[t]$ one at a time. However, in these earlier proofs, each blowup part is found in a ``one-shot'' way, by applying (a variant of) the K\H ov\'ari--S\'os--Tur\'an theorem to some appropriate auxiliary bipartite graph. In our approach, by contrast, we find the $t$ vertices of a part of $H[t]$ one vertex at a time, picking them out via a greedy algorithm. In order to analyze the evolution of this algorithm, we track an associated ``energy'' functional. This iterative approach bears some resemblance to the recent \emph{book algorithm} used in the spectacular improvement to the upper bound on diagonal Ramsey numbers \cite{MR5067166} (see also \cite{2407.19026,MR5057671} for variants of this technique, as well as the surveys \cite{bourbaki,2601.05221}), as well as to the technique used to prove Szemer\'edi's regularity lemma \cite{MR540024} and its many extensions. Because such iterative approaches have found applications in a wide variety of areas, including in hypergraphs, we are cautiously optimistic that our technique could also be adapted to these settings. These connections suggest many potential avenues for further research, and we speculate on some of these in \cref{sec:conclusion}.

We conclude with a few further remarks on the quantitative aspects of \cref{thm:main}. First, by a more careful analysis of our proof, one of the logarithmic factors in the denominator can be removed; however, since there is no reason to expect the resulting bound to be optimal, we have chosen to present a slightly weaker result with a more transparent proof. Second, while we do not expect the bound $\gamma^{1/2+o(1)}$ to be optimal, it appears to be a hard limit of our technique; concretely, if $G$ is a random $N$-vertex tripartite graph with bipartite edge densities $\frac 12, \sqrt{\gamma},\sqrt{\gamma}$, respectively, then $G$ contains $\Omega(\gamma N^3)$ triangles as well as a copy of $K_3[\Omega(\log N/{\log \frac 1 \gamma})]$, and yet we do not know how to use our technique to find a copy of $K_3[t]$ in $G$ for any $t = \omega(\sqrt \gamma \log N)$.

The rest of this paper is organized as follows. As our proof is fairly technical, we give a detailed overview of it in \cref{sec:sketch}. \cref{sec:prelims} contains some preliminary lemmas we use in our proof, especially the various technical computations related to our energy function $\en_\largereal$. \cref{sec:basic} rigorously defines and analyzes the basic iterative algorithm discussed above, and \cref{sec:switching} does the same for the more refined algorithm involving switching. Using these tools, we present the proof of \cref{thm:main} in \cref{sec:main proof}. We conclude in \cref{sec:conclusion} with some concluding remarks. 

\section{Proof overview}\label{sec:sketch}
We now discuss our proof of \cref{thm:main} in more detail. First, while \cref{thm:main} is stated for all graphs $H$, all of the difficulty actually lies in proving it in the case $H=K_3$. Indeed, there is a simple and well-known reduction (\cref{prop:monotonicity reduction}) that shows that it suffices to prove \cref{thm:main} for cliques; moreover, our main argument also allows us to reduce the problem for any clique to the case of the triangle. So for the moment we focus on $H=K_3$, and at the end of the proof sketch briefly discuss how the same ideas yield the reduction from all cliques to this case. 

Let now $G$ be a graph with $\gamma N^3$ triangles. Our next reduction allows us to find within $G$ three sets $A,B,C$, each of size $n = \Omega_\gamma(N)$, with the following properties: each edge in $E(A,B)$ lies in at least $\tau\ab C$ triangles of $G$, for some $\tau \geq \gamma^{2/3}$, and moreover the number of edges between $A$ and $B$ is $p\ab A \ab B$, for some $p \geq \gamma/\tau$. We find this structure by applying the regularity lemma\footnote{More precisely, we apply the cylinder regularity lemma of Duke, Lefmann, and R\"odl \cite{MR1333857}, which gives us a much more reasonable ``sufficiently large'' condition on $N$.} to $G$, and passing to a regular triple of parts $A,B,C$ spanning at least $\gamma \ab A \ab B \ab C$ triangles, which must exist by averaging. 
By the counting lemma, the number of triangles among these three parts is very close to $d(A,B)d(A,C)d(B,C)\ab A \ab B \ab C$, where $d$ denotes the edge density between two parts. Without loss of generality $p=d(A,B)$ is the smallest of the three edge densities, implying that $\tau=d(A,C)d(B,C)\geq \gamma^{2/3}$. We've now found that an \emph{average} edge in $E(A,B)$ lies on at least $\tau\ab C$ triangles, and that $p\tau=d(A,B)d(A,C)d(B,C) \geq \gamma$, so we are essentially done; the final step is to again use properties of regular pairs to argue that nearly all the edges have the average behavior, so upon deleting a few outliers we obtain the claimed structure.

We now turn to our main argument. Our goal is to pick out vertices $c_1,\dots,c_s \in C$, and to restrict to their common neighborhoods $A^\dagger \subseteq A, B^\dagger \subseteq B$. If we can then find a complete bipartite subgraph $K_{t,t}$ between $A^\dagger$ and $B^\dagger$, we have found a complete tripartite graph $K_{s,t,t}$ together with $C^\dagger=\{c_1,\dots,c_s\}$; it is for this reason that we insist that all vertices in $A^\dagger\cup B^\dagger$ are adjacent to all of $c_1,\dots,c_s$. In particular, this yields a triangle blowup $K_3[\min\{s,t\}]$. 

In order to find a copy of $K_{t,t}$ in $A^\dagger \cup B^\dagger$, we will apply the K\H ov\'ari--S\'os--Tur\'an theorem \cite{MR0065617}, which states that every bipartite graph $(X,Y)$ contains a complete bipartite subgraph with parts of size $\Omega(\log \min \{\ab X,\ab Y\}/{\log(1/d(X,Y))})$; this essentially says that for $H=K_2$, we have the optimal dependence $c_{K_2}(\gamma)\geq \Omega(1/{\log \frac 1 \gamma})$ in \cref{thm:nikiforov}. 

Note that the K\H ov\'ari--S\'os--Tur\'an theorem has a logarithmic dependence on both the sizes of the sets and on the edge density, which means that we are allowed to lose a great deal when passing from the pair $(A,B)$ to the pair $(A^\dagger,B^\dagger)$. Concretely, it is sufficient for us if $\ab{A^\dagger},\ab{B^\dagger}\geq \sqrt n$, and if $d(A^\dagger,B^\dagger)\geq p^2$, where we recall that $\ab A = \ab B = n$ and $d(A,B)=p$. That is, we are allowed to lose by a polynomial amount on both the sizes of the sets and on the edge density. The reason is that
\[
	\frac{\log \sqrt n}{\log \frac{1}{p^2}} = \frac 14 \cdot \frac{\log n}{\log \frac 1 p},
\]
hence any polynomial loss yields an absolute constant factor loss in the final bound.

In order to ensure we obtain these bounds on $\ab{A^\dagger},\ab{B^\dagger}$, and $d(A^\dagger,B^\dagger)$, it will be helpful to track a certain potential function, whose inputs are the three numbers $\ab A, \ab B, d(A,B)$. Whatever potential function we pick, we would like the effect of dividing $\ab A$ or $\ab B$ by $\sqrt n$ to be roughly the same as the effect of multiplying $d(A,B)$ by $p$, since we are allowing ourselves to lose a factor of $\sqrt n$ on the former, but only a factor of $p$ on the latter. This suggests that a natural form of the potential function multiplies $\ab A \ab B$ by a high power of $d(A,B)$, precisely so that losses in $d(A,B)$ will be amplified. Concretely, for a real number $\largereal>0$ we define the \emph{$\largereal$-energy} of the pair $(A,B)$ to be
\[
	\en_\largereal(A,B) = \ab A \ab B d(A,B)^\largereal.
\]
Thus, dividing $\ab A$ or $\ab B$ by $\sqrt n$ decreases $\en_\largereal(A,B)$ by a factor of $\sqrt n$, whereas multiplying $d(A,B)$ by a factor of $p$ decreases $\en_\largereal(A,B)$ by a factor of $p^R$. As we want these effects to be roughly the same,
in our proof, we will end up selecting $\largereal = \Theta(\log n/{\log \frac 1p})$. 

As discussed in the introduction, we will pick out the vertices $c_1,\dots,c_s$ one at a time. So suppose that at some point in the process, we have picked out $c_1,\dots,c_\ell$, and restricted $A$ and $B$ to their common neighborhoods, say $A\up \ell, B\up \ell$. Inductively, we have maintained that $A\up \ell, B\up \ell$, and $d(A\up \ell, B\up \ell)$ are fairly large, namely at most polynomially smaller than their starting values. 
Our goal is now to select $c_{\ell+1}$ in such a way that we can keep the iteration going.

To do so, let us consider some $c^* \in C$, and let $A^+=N(c^*)\cap A\up \ell, B^+=N(c^*)\cap B\up \ell$. Our hope is to show that, by selecting $c^*$ appropriately, we can ensure that $\ab{A^+},\ab{B^+}$, and $d(A^+,B^+)$ remain fairly large, so that we can set $c_{\ell+1}=c^*, A\up{\ell+1}=A^+, B\up{\ell+1}=B^+$, and hence continue the iteration. 

First, we note that controlling the size of $A^+$ and $B^+$ is not difficult. Indeed, since every edge in $E(A,B)$ lies in at least $\tau \ab C$ triangles, we see that the triple $(A\up \ell, B\up \ell, C)$ still contains at least $\tau e(A\up \ell, B\up \ell)\ab C$ triangles, and hence we can select\footnote{In reality, we also need to ensure that $c^*$ is a new vertex, i.e.\ we must pick $c^* \in C \setminus \{c_1,\dots,c_\ell\}$. But as $\ell \leq s = O_\gamma(\log n) \ll n$, this restriction has almost no impact, and we ignore it for this overview.} $c^* \in C$ which lies in at least $\tau e(A\up \ell, B\up \ell)$ triangles. But the number of triangles containing $c^*$ is precisely $e(A^+,B^+)$, hence
\[
	\ab{A^+}\ab{B^+}\geq e(A^+,B^+) \geq \tau e(A\up \ell, B\up \ell) = \tau p\up \ell \ab {A\up\ell} \ab {B\up \ell},
\]
where we set $p\up \ell = d(A\up \ell, B\up \ell)$. Recall that at the beginning of the process, we had $\tau p \geq \gamma$, and by our inductive assumption we have $p\up \ell \geq p^2$, hence $\ab{A^+}\ab{B^+}\geq \gamma^2 \ab{A\up \ell}\ab{B\up \ell}$. Note that losing a factor of $\gamma^2$ on the sizes of the sets at every step is completely fine for us: we will end up doing $s$ steps, so the amount we will lose on the sizes is $\gamma^{2s}$. So as long as $s \leq \log n/(10\log \frac 1 \gamma)$, say, the total amount that we will lose on the sizes of $A$ and $B$ is at most a small power of $n$, which is exactly what we are targeting. In particular, if this were the only constraint on our process, we could keep it going until $s = \Theta(\log n/{\log \frac 1 \gamma})$, thus obtaining the optimal bound $c_{K_3}(\gamma) = \Theta(1/{\log \frac 1 \gamma})$. Unsurprisingly, therefore, this is not the only, or even the main, constraint on our process.

Indeed, as discussed above, and as recorded in the form of our energy function $\en_\largereal$, we are willing to tolerate much larger losses on the sizes of $\ab{A^+},\ab{B^+}$ than we are on the density $d(A^+,B^+)$. Specifically, let us pick some $\alpha>0$, and say that we are happy if we can ensure that $d(A^+,B^+)\geq p\up \ell - \alpha$. Since we will eventually do $s$ such steps, we will choose $\alpha \leq p/(2s)$, so that at the end of the day we will still have $d(A^\dagger,B^\dagger) \geq p - \alpha s \geq p/2$, which is even stronger than the lower bound of $p^2$ we were originally targeting\footnote{As mentioned in the introduction, we can remove one of the logarithmic factors in \cref{thm:main} via a more careful analysis, and the slack in this step is the reason: to obtain this stronger result, we would pick a slightly larger value of $\alpha$, so that $p - \alpha s = p^2$.}. 

As we are happy if $d(A^+,B^+)\geq p\up \ell - \alpha$, let us assume contrariwise that $d(A^+,B^+)<p\up \ell-\alpha$. In this case, we use a standard but crucial trick, which goes back at least to \cite{MR928742}: we have found a large pair of subsets $A^+\subseteq A, B^+\subseteq B$ which span substantially fewer edges than what you would expect based on the global average, and hence these ``missing edges'' must be somewhere else. More precisely, setting $A^-=A\up \ell \setminus A^+, B^- = B\up \ell \setminus B^+$, we learn that one of the three pairs $(A^-,B^-),(A^+,B^-),(A^-,B^+)$ must have substantially \emph{higher} edge density than $p\up \ell$. Specifically, it is not hard to see that for some pair $(X,Y) \in \{(A^-,B^-),(A^+,B^-),(A^-,B^+)\}$, we have
\begin{equation}\label{eq:discrepancy consequence}
	d(X,Y) \geq p\up \ell + \frac{\alpha\ab{A^+}\ab{B^+}}{3\ab X \ab Y}
\end{equation}
Because our energy functional $\en_\largereal$ is highly sensitive to changes in the edge density, an elementary but tedious computation in fact implies that for one of these three pairs $(X,Y)$, we actually have $\en_\largereal(X,Y)>\en_\largereal(A\up \ell,B\up \ell)$, so long as $R \geq 10 p/(\alpha \tau)$. That is, in the situation where we are not happy because $d(A^+,B^+)< p\up \ell-\alpha$, we have the ability to restrict $A\up \ell, B\up \ell$ to subsets and to increase the energy. Of course, such steps are not directly useful to us because we are not able to select a new vertex $c_{\ell+1}$, but in return we obtain an energy increment.

Thus, we end up with two kinds of steps: steps where we select a new vertex $c_{\ell+1}$ and restrict to its neighborhoods, at the expense of losing at most $\alpha$ on the density, and steps where we do not select a new vertex but can restrict to subsets and obtain an energy increment. In the formal analysis of the algorithm, it will be more convenient to never actually perform the second kind of step, and instead, whenever we create a new pair $(A\up \ell, B\up \ell)$, we immediately restrict these two sets to subsets which maximize the energy $\en_\largereal$. By doing this, we automatically know that no energy increment is possible, hence the second scenario above can never arise, and the vertex $c^*$ we pick necessarily satisfies $d(A^+,B^+) \geq p\up \ell-\alpha$. 

It only remains to see for how many steps $s$ we can keep this process going. We also have a free choice to make, namely the choice of the parameter $\alpha$, subject to the constraints discussed above, namely $\alpha \leq p/(2s)$ and $R \geq 10 p /(\alpha \tau)$, which imply $s \leq p/(2\alpha) \leq R\tau/20$. Recall too that we require $R = \Theta(\log n/{\log \frac 1 p})$, in order to be able to conclude at the end that $\ab{A^\dagger},\ab{B^\dagger},d(A^\dagger,B^\dagger)$ are still polynomially large relative to their starting values, so that we can apply the K\H ov\'ari--S\'os--Tur\'an theorem. This implies that we can keep the process going until step
\[
	s = \Theta(R\tau) = \Theta \left( \frac{\tau \log n}{\log \frac 1 p} \right) = \Omega \left( \frac{\gamma^{2/3}\log n}{\log \frac 1 \gamma}  \right),
\]
where we recall that $\tau \geq \gamma^{2/3}$ and $p \geq \gamma/\tau \geq \gamma$.
Moreover, at the end of the process, we apply the K\H ov\'ari--S\'os--Tur\'an theorem to $(A^\dagger, B^\dagger)$, finally using our guarantee that $\ab{A^\dagger}\geq \sqrt n, \ab{B^\dagger}\geq \sqrt n, d(A^\dagger,B^\dagger)\geq p^2$, to find a copy of $K_{t,t}$ for
\[
	t = \Omega \left( \frac{\log \sqrt n}{\log \frac{1}{p^2}} \right) = \Omega \left( \frac{\log n}{\log \frac 1 \gamma} \right).
\]
In particular, $t$ is much larger than $s$, and we conclude that $G$ contains a triangle blowup $K_3[s]$, 
proving that $c_{K_3}(\gamma)\geq \gamma^{2/3+o(1)}$. 

Of course, our goal is to actually prove that $c_{K_3}(\gamma)\geq \gamma^{1/2+o(1)}$, and we now turn to describe the remaining ingredient necessary to achieve this. At a high level, the new ingredient is to exploit the symmetry between the three parts: the argument above treated $C$ completely differently from $A,B$, and there are situations where this becomes wasteful.

More precisely, in the argument above, 
the weakness is that $t$ is much larger than $s$, by a factor of roughly $1/\tau$. Thus, if we could somehow pass to a configuration with a larger value of $\tau$, we would obtain a larger value of $s$, and hence a better lower bound on $c_{K_3}(\gamma)$. 

How could we achieve this? Note that at every step of the process, the total number of triangles present is at least $e(A\up \ell, B\up \ell) \cdot (\tau \ab C) \geq (p\tau/2)\ab{A\up \ell}\ab{B\up \ell}\ab C\geq (\gamma/2)\ab{A\up \ell}\ab{B\up \ell}\ab C$, recalling that we have maintained $p\up \ell = d(A\up \ell, B\up \ell) \geq p/2$. Consequently, the average number of triangles that an edge of $E(A\up \ell, C)$ lies in is at least
\[
	\frac{(\gamma/2)\ab{A\up \ell}\ab{B\up \ell}\ab C}{e(A\up \ell,C)} \geq  \frac{\gamma/2}{d(A\up \ell,C)} \ab{B\up \ell}.
\]
In particular, if it ever happens that $\wh \tau \coloneqq \gamma/d(A\up \ell,C) \gg \tau$, then we have essentially found the configuration of the type discussed above, where the value of $\tau$ has been replaced by the much larger value $\wh \tau/2$. More precisely, letting $\wh A = A\up \ell, \wh B = C, \wh C = B\up \ell$, we see that the triple $(\wh A, \wh B, \wh C)$ contains at least $(\gamma/2)\ab{\wh A}\ab{\wh B}\ab{\wh C}$ triangles, and that an average edge in $E(\wh A, \wh B)$ lies in at least $(\wh \tau/2) \ab{\wh C}$ triangles. This is not, per se, the setup we had above, since we only have a bound on the \emph{average} number of triangles containing an edge in $E(\wh A, \wh B)$, but by deleting the edges lying in too few triangles, we can correct this issue. By doing this and then running the argument above, we end up finding a triangle blowup $K_3[\wh s]$, where
\[
	\wh s = \Omega \left( \frac{\wh \tau \log n}{\log \frac 1 \gamma} \right). 
\]

Let us pick some parameter $\mu \gg \tau$, which we will optimize later, and declare that we will do such a switch if ever $\wh \tau > \mu$. In particular, if this ever happens, then we find a triangle blowup of size $\wh s = \Omega(\mu \log n/{\log \frac 1 \gamma})$. 

We may thus assume that we never perform such a switch, and hence that at every stage of the process we have $\wh \tau \leq \mu$, or equivalently $d(A\up \ell,C)\geq \gamma/\mu$. By the same argument but with the roles of $A\up \ell$ and $B\up \ell$ interchanged, we may also assume that $d(B\up \ell,C)\geq \gamma/\mu$. This implies that the average degree of a vertex $c^* \in C$ in $A\up \ell$ is at least $(\gamma/\mu)\ab{A\up \ell}$, and similarly for the degree into $B\up \ell$. By another round of removing vertices whose degree is much lower than the average, this means that in the argument presented above, we may assume that $\ab{A^+}\geq (\gamma/\mu)\ab{A\up \ell}, \ab{B^+}\geq (\gamma/\mu)\ab{B\up \ell}$. 

The reason this is useful to us is that in \eqref{eq:discrepancy consequence}, the amount we gain on the density of the pair $(X,Y)$ is proportional to $\ab{A^+}\ab{B^+}$, and hence we are gaining more here. This, in turn, means that we can find $\en_\largereal(X,Y)>\en_\largereal(A\up \ell,B\up \ell)$ under a more modest assumption on $\largereal$; it now suffices that $\largereal \gtrsim \mu/(\alpha \tau)$,
where the notation $\gtrsim$ hides logarithmic factors. Recall that earlier we needed $R \geq 10p/(\alpha \tau)$, and hence we now obtain a milder requirement, so long as $\mu \ll p$. 

It now remains to do the optimization; we again require $\alpha \leq p/(2s)$, so we conclude that we can continue the process up to step
\[
	s \leq \frac{p}{2\alpha} \lesssim \frac{p\tau R}{\mu} = \wt\Theta \left( \frac{\gamma}{\mu} \log n \right),
\]
where we hide factors of $\log \frac 1 \gamma$ in the $\wt \Theta$ notation, and recall that we choose $R = \wt \Theta(\log n)$.
In order to have the strongest possible bound, we should choose $\mu$ so that $s$ and $\wh s$ are of roughly the same order, meaning that we should take $\gamma/\mu=\mu$, or equivalently $\mu=\sqrt \gamma$. By doing so, we conclude that $c_{K_3}(\gamma) \geq \gamma^{1/2+o(1)}$. 

Note that this argument is useful so long as $\tau \ll \mu \ll p$; we also know that $p \tau = \Omega(\gamma)$. So we can only win if $\tau \ll p$, or equivalently if $p \gg \sqrt \gamma$. In the regime where $p \asymp \tau \asymp \sqrt \gamma$, this switching argument wins us nothing. This is precisely why, in the introduction, we said that our technique does not seem able to cope with a random tripartite graph of edge densities $\frac 12, \sqrt \gamma, \sqrt \gamma$; in such a graph, no matter which of the three parts we select as $C$, we will have $p \geq \sqrt \gamma$.

To conclude, we briefly explain how we obtain the same result for $c_{K_{k+1}}(\gamma)$, using essentially the same argument. So let $G$ be an $N$-vertex graph with $\gamma N^{k+1}$ copies of $K_{k+1}$.  First, by applying the same regularity technique, we can pass to a collection of disjoint sets $A_1,\dots,A_k,C \subseteq V(G)$, each of size $n=\Omega_{\gamma,k}(N)$, with the property that there are at least $p\ab{A_1}\dotsb \ab{A_k}$ copies of $K_k$ among $(A_1,\dots,A_k)$, each of which extends to at least $\tau \ab C$ copies of $K_{k+1}$ with a vertex in $C$, where $\tau \geq \gamma^{2/(k+1)}$ and $p\tau \geq \gamma$. We now run exactly the same iterative argument, selecting vertices from $C$ one by one and restricting to their common neighborhoods in $A_1,\dots,A_k$. The analysis of this algorithm is nearly identical to the one above, and the upshot of it is that we can pass to some $C^\dagger \subseteq C, A_i^\dagger \subseteq A_i$, with the following properties: we have that $\ab{A_i^\dagger}= \sqrt n$ for all $i$, that $\ab {C^\dagger}=s = \Theta(\tau \log n/{\log \frac 1 p})$, and that the density of $K_k$-copies among $A_1^\dagger,\dots,A_k^\dagger$ is at least $p/2$. We now note that the induced subgraph on $A_1^\dagger \cup \dots \cup A_k^\dagger$ has $N'=k\sqrt n = \Omega(\sqrt{N})$ vertices and at least $\Omega(\gamma (N')^k)$ copies of $K_k$, so we may proceed by induction on $k$, with the base case $k=2$ being precisely the triangle case handled above. The key thing to note is that for $k \geq 3$, we have $k+1 \geq 4$, and hence
\[
	s = \Omega \left( \frac{\gamma^{2/(k+1)}\log n}{\log \frac 1 \gamma} \right) \geq \Omega \left( \frac{\gamma^{1/2}\log n}{\log \frac 1 \gamma} \right),
\]
meaning that the number of vertices we select from $C$ is indeed enough.
In fact, the same argument yields the recursive bound
\[
	c_{K_{k+1}}(\gamma) \geq \min \{\gamma^{2/(k+1)+o(1)}, c_{K_k}(\Omega_k(\gamma))\}.
\]

\section{Preliminaries}\label{sec:prelims}
We will use the K\H ov\'ari--S\'os--Tur\'an theorem \cite{MR0065617} in the following form. For a bipartite graph with parts $(A,B)$, we denote by $d(A,B) = e(A,B)/(\ab A \ab B)$ its edge density. 
\begin{theorem}\label{thm:kst}
	Let $p \in (0,\frac 12]$, let $G$ be a bipartite graph with parts $(X,Y)$, and assume that $d(X,Y)\geq p$. Then $G$ contains a copy of $K_{t,t}$, where
	\[
		t = \bflo{\frac{\log (\min\{\ab X, \ab Y\})}{2\log \frac 1p}}.
	\]
\end{theorem}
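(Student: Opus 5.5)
We prove \cref{thm:kst} by the standard double-counting argument of K\H ov\'ari--S\'os--Tur\'an. Let $t$ be as in the statement. If $t \leq 0$ there is nothing to prove, and if $t=1$ the conclusion is just that $G$ has an edge, which holds since $d(X,Y)\geq p>0$. So assume $t\geq 2$, and by symmetry assume $\ab X\leq \ab Y$; write $m=\ab X$. Then $t \leq \log m/(2\log\frac 1p)$, or equivalently $p^{2t}\geq 1/m$, i.e.\ $m\geq p^{-2t}$.

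We count pairs $(y,S)$ with $y\in Y$ and $S$ a $t$-subset of $N(y)\subseteq X$. On the one hand, this count equals $\sum_{y\in Y}\binom{d(y)}{t}$. By convexity of $x\mapsto\binom{x}{t}$ (extended as $0$ below $t-1$) and Jensen's inequality, it is at least $\ab Y\binom{pm}{t}$, since the average degree from $Y$ is at least $pm$. On the other hand, if $G$ contains no $K_{t,t}$, then every $t$-set $S\subseteq X$ has fewer than $t$ common neighbours, so the count is at most $(t-1)\binom{m}{t}$. Hence it suffices to show
\[
\ab Y\binom{pm}{t} \geq t\binom{m}{t}.
\]

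Since $\ab Y\geq m$, it is enough to prove $m\binom{pm}{t}\geq t\binom mt$. We bound the ratio term by term:
\[
\frac{\binom{pm}{t}}{\binom mt}=\prod_{i=0}^{t-1}\frac{pm-i}{m-i}\geq \Bigl(p-\frac{t}{m}\Bigr)^t .
\]
The main (though mild) obstacle is checking that this is at least $t/m$ in the relevant range, and the factor $2$ in the definition of $t$ supplies the needed slack. Indeed, $m\geq p^{-2t}\geq 4^t$, and $t\leq \log m/(2\log 2)$, so $t/m$ is tiny compared with $p$. For instance, one can check that $t/m \leq p\cdot p^{t}\cdot t\, 2^{-t}\leq p/2$. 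Therefore
\[
\Bigl(p-\frac tm\Bigr)^t\geq p^t\Bigl(1-\frac{t}{pm}\Bigr)^t\geq p^t\Bigl(1-\frac{t^2}{pm}\Bigr)\geq \frac{p^t}{2},
\]
and it remains to verify $p^t/2\geq t/m$, i.e.\ $m\geq 2t\,p^{-t}$. This follows from $m\geq p^{-2t}=p^{-t}\cdot p^{-t}\geq p^{-t}2^{t}\geq 2t\,p^{-t}$ for $t\geq 2$. The remaining work is routine bookkeeping with these inequalities. In particular, one should confirm that $t^2/(pm)\leq\frac12$, which again holds because $m\geq 4^t p^{-t}$ is exponentially large in $t$, and handle the edge case where $pm<t$; this case cannot occur, because $pm\geq p^{1-2t}\geq 2^{2t-1}\geq t$.
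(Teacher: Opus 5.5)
Your argument is correct. It is the standard K\H ov\'ari--S\'os--Tur\'an double-counting proof, which is exactly what the paper relies on; the paper gives no proof of its own and simply cites \cite{MR0065617}.

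Two of your side remarks are false for $p\in(\frac14,\frac12]$: both $t/m\leq p\cdot p^t\cdot t\,2^{-t}$ and $m\geq 4^tp^{-t}$ fail at $p=\frac12$, $m=16$, $t=2$. However, the inequalities they were meant to support, $t/m\leq p/2$ and $t^2/(pm)\leq\frac12$, follow directly from $pm\geq p^{1-2t}\geq 2^{2t-1}$ together with $2t^2\leq 2^{2t-1}$ for $t\geq 2$.
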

\subsection{Analytic inequalities}
In this paper, all logarithms are to base $2$. 

We need the following lemma, a simple analytic inequality.
\begin{lemma}\label{lem:bernoulli}
	For all real numbers $y\geq 0 ,\largereal \geq 1$, we have
	\[
		y^{1/\largereal}\leq 1+\frac y \largereal . 
	\]
\end{lemma}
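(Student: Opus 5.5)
The plan is to deduce this from concavity of the map $x\mapsto x^{1/\largereal}$, or equivalently from Bernoulli's inequality, which is presumably where the lemma's label comes from. First the trivial case: if $y=0$, the left-hand side is $0$ and the right-hand side is $1$, so there is nothing to prove. So assume $y>0$.

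The cleanest route is to use $1/\largereal \in (0,1]$. For an exponent $r\in(0,1]$, the function $f(x)=x^r$ is concave on $[0,\infty)$. It therefore lies below its tangent line at $x=1$, which gives
\[
	y^{r}\leq 1+r(y-1) \leq 1+ry.
\]
The second inequality is just the observation that the term $-r$ is nonpositive. Substituting $r=1/\largereal$ gives exactly $y^{1/\largereal}\leq 1+y/\largereal$. This is the weighted AM--GM inequality $y^{r}\cdot 1^{1-r}\leq ry+(1-r)$, which I would cite as the justification for the tangent-line step.

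An alternative argument I could give instead is Bernoulli's inequality $(1+y/\largereal)^{\largereal}\geq 1+\largereal\cdot (y/\largereal) = 1+y \geq y$. This holds for real $\largereal\geq 1$ and $y/\largereal\geq -1$; here it applies since $y\geq 0$. Taking $\largereal$-th roots of both sides, which is monotone on nonnegative reals, yields the claim.

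The only step needing any care is checking that Bernoulli's inequality, or concavity, is valid for non-integer $\largereal$. For this it suffices to note that $g(x)=(1+x)^{\largereal}-1-\largereal x$ has $g(0)=0$ and $g'(x)=\largereal\bigl((1+x)^{\largereal-1}-1\bigr)\geq 0$ for $x\geq 0$, since $\largereal-1\geq 0$. Hence $g\geq 0$ on $[0,\infty)$, and no real obstacle remains.
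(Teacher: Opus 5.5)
Your proposal is correct, and your ``alternative'' Bernoulli argument, $(1+y/\largereal)^{\largereal}\geq 1+y\geq y$ followed by taking $\largereal$-th roots, is exactly the paper's proof. Your primary weighted AM--GM argument is the same fact seen through the inverse function (concavity of $x^{1/\largereal}$ rather than Bernoulli's inequality for $(1+x)^{\largereal}$), and it gives the slightly sharper bound $1+(y-1)/\largereal$, which the paper does not need.
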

\begin{proof}
	Recall Bernoulli's inequality, which states that $(1+x)^\largereal\geq 1+\largereal x$ for all $\largereal \geq 1$ and $x \geq 0$. Setting $x=y/\largereal$, we find that
	\[
		\left( 1+\frac y\largereal \right)^\largereal = (1+x)^\largereal \geq 1+\largereal x = 1+y \geq y,
	\]
	which yields the claimed bound by taking $1/\largereal$ powers of both sides. 
\end{proof}
The following is a variant of \cref{lem:bernoulli}, giving us a better upper bound if we know that $y$ is either very small or very large. 
\begin{lemma}\label{lem:stronger bernoulli}
	Let $\eta \in (0,\frac 1{4}]$ and let $\largereal\geq \log \frac 1 \eta$. For every $y \in [1,(1-\eta)^{-2}] \cup [\frac 1\eta,\infty)$, we have
	\[
		y^{1/\largereal}\leq 1+(\eta \log \tfrac 1 \eta)\frac y \largereal.
	\]
\end{lemma}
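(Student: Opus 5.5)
The lemma is a statement about the concave function $y\mapsto y^{1/\largereal}$, so I would bound it in both ranges by elementary linear upper bounds. Throughout, write $L=\log\frac1\eta$, with logarithms to base $2$. Since $\eta\le\frac14$ we have $L\ge 2$, and by hypothesis $\largereal\ge L\ge 2$. I would treat the two ranges of $y$ separately.

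\emph{Small range, $1\le y\le(1-\eta)^{-2}$.} First I would sharpen \cref{lem:bernoulli} to $y^{1/\largereal}\le 1+\frac{y-1}{\largereal}$. This follows from Bernoulli's inequality with $x=(y-1)/\largereal\ge 0$, which gives $(1+x)^\largereal\ge 1+\largereal x=y$, and then taking $1/\largereal$ powers. It remains to show $y-1\le \eta L\, y$. Write $y-1=y(1-1/y)$. Using $1/y\ge(1-\eta)^2$ we get
\[
y-1\le y\bigl(1-(1-\eta)^2\bigr)=y(2\eta-\eta^2)\le 2\eta y\le \eta L y,
\]
where the last step uses $L\ge 2$. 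This finishes the small range.

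\emph{Large range, $y\ge 1/\eta$.} Here I would use concavity, bounding $y^{1/\largereal}$ by its tangent line at $y_0=1/\eta$:
\[
y^{1/\largereal}\le y_0^{1/\largereal}+\frac{y_0^{1/\largereal}}{\largereal\, y_0}(y-y_0).
\]
Now $y_0^{1/\largereal}=2^{L/\largereal}$ with $L/\largereal\in[0,1]$. Using $2^u\le 1+u$ on $[0,1]$ (convexity of $2^u$), this gives $y_0^{1/\largereal}\le 1+L/\largereal$. Separately, $y_0^{1/\largereal}\le 2$ bounds the slope by $2\eta/\largereal$. Since $y-y_0\ge0$, combining these yields
\[
y^{1/\largereal}\le 1+\frac{L}{\largereal}+\frac{2\eta}{\largereal}\Bigl(y-\frac1\eta\Bigr)=1+\frac{L-2+2\eta y}{\largereal}.
\]
So it suffices to check $L-2+2\eta y\le \eta L y$, i.e.\ $L-2\le \eta y(L-2)$. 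This holds because $L\ge2$ and $\eta y\ge 1$.

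The only delicate point is in the large range. The crude bound $y_0^{1/\largereal}\le 2$ must not be used for the constant term, since it costs an additive $1/\largereal$ that cannot be absorbed when $L=2$. It is only safe for the slope, while the constant term needs the sharper $1+L/\largereal$; with that split the final inequality is exact bookkeeping.
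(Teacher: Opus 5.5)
Your proof is correct. On the range $1\le y\le(1-\eta)^{-2}$ it is the paper's argument repackaged. Writing $L=\log\frac1\eta$ as you do, both proofs use Bernoulli's inequality to reduce to $y\le 1+\eta L y$. On this range that follows from $2\eta-\eta^2\le\eta L$, i.e.\ $(1-\eta)^2+\eta L\ge 1$.

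On the range $y\ge 1/\eta$ you take a genuinely different route. The paper sets $\theta=L/\largereal\le 1$ and $z=\eta y\ge 1$. It proves $(1+\theta z)^{1/\theta}\ge y^{1/L}$ through a chain: Bernoulli with exponent $1/\theta\ge 1$, then AM--GM $1+z\ge 2\sqrt z$, then $\sqrt z\ge z^{1/L}$ and $2z^{1/L}=(z/\eta)^{1/L}=y^{1/L}$. It then raises both sides to the power $\theta$. This keeps the whole lemma inside a single tool and needs no calculus, but the chain is less transparent and discards slack at each step.

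You instead bound the concave function $y^{1/\largereal}$ by its tangent line at $y_0=1/\eta$. You estimate $y_0^{1/\largereal}=2^{L/\largereal}$ by $1+L/\largereal$ in the constant term and by $2$ in the slope. This gives the intermediate bound $y^{1/\largereal}\le 1+(L-2+2\eta y)/\largereal$, which is sharper than required for large $y$ (slope $2\eta$ instead of $\eta L$). The lemma then collapses to $(L-2)(\eta y-1)\ge 0$, which shows exactly where the hypotheses $\eta\le\frac14$ and $y\ge 1/\eta$ enter.

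One small correction to your closing remark. Using $y_0^{1/\largereal}\le 2$ in the constant term loses $1-L/\largereal$, not $1/\largereal$. The resulting bound already fails at $y=1/\eta$ whenever $\largereal>L$, so your conclusion that the split is necessary still stands.
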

\begin{proof}
	First suppose that $y \in [1,(1-\eta)^{-2}]$. By applying Bernoulli's inequality to $x=y(\eta \log \frac 1 \eta)/\largereal$, we find
	\[
		\left( 1+(\eta \log \tfrac 1 \eta)\frac y \largereal \right)^\largereal \geq 1+(\eta \log \tfrac 1 \eta)y \geq y \left( (1-\eta)^{2}+\eta\log \tfrac 1 \eta \right),
	\]
	where the second inequality uses our assumption $y \leq (1-\eta)^{-2}$. Now note that
	\[
		(1-\eta)^2+\eta \log \tfrac 1 \eta = 1+\eta(-2+\eta+\log \tfrac 1 \eta) \geq 1 + \eta(-2+2)= 1,
	\]
	using our assumptions that $\eta > 0$ and $\eta \leq \frac 14$, the latter of which implies $\log \frac 1 \eta \geq 2$. This completes the proof in the first case, upon taking the $1/\largereal$ power of both sides.

	Now suppose that $y \geq 1/\eta$. Let $\theta = (\log \frac 1 \eta)/\largereal \leq 1$ and $z = \eta y\geq 1$. We now have that
	\begin{align*}
		\left( 1+(\eta \log \tfrac 1 \eta)\frac y \largereal \right)^{1/\theta}&=\left( 1+\theta z \right)^{1/\theta}\geq 1+z\geq 2 \sqrt z\geq 2z^{1/{\log \frac 1 \eta}} = \left( \frac{z}{\eta} \right)^{1/{\log \frac 1 \eta}}=y^{1/{\log \frac 1 \eta}},
	\end{align*}
	where the first inequality is Bernoulli's, the second is AM--GM, and the third uses that $z \geq 1$ and $\log \frac 1 \eta \geq 2$. Taking the $\theta$ power of both sides gives the claimed result. 
\end{proof}

\subsection{Energy and strictly balanced subgraphs}
For a $k$-partite $k$-graph $(A_1,\dots,A_k)$, we denote by $e(A_1,\dots,A_k)$ its edge count and by $d(A_1,\dots,A_k)=e(A_1,\dots,A_k)/\prod_i \ab{A_i}$ its edge density. We will frequently use the notation $\A$ as a shorthand for the tuple $(A_1,\dots,A_k)$, to minimize the number of symbols. Similarly, we will shorten tuples such as $(A_1',\dots,A_k')$ or $(A_1^*,\dots,A_k^*)$ as $\A',\A^*$, respectively. 

For a real number $\largereal>1$, we also define the \emph{$\largereal$-energy} of $\A=(A_1,\dots,A_k)$ to be
\[
	\en_\largereal(\A) \coloneqq \ab{A_1}\dotsb\ab{A_k} d(\A)^\largereal =e(\A)d(\A)^{\largereal-1},
\]
with the convention that $\en_\largereal(\A)=0$ if $\A$ has no edges (and in particular if some $A_i$ is empty).
Finally, we say that $\A$ is \emph{strictly $\largereal$-balanced} if for all subsets $A'_i \subseteq A_i$ such that at least one inclusion is proper, we have 
\[
	\en_\largereal(\A) > \en_\largereal(\A'). 
\]
We have the following extremely simple lemma.
\begin{lemma}\label{lem:pass to max}
	Every $k$-partite $k$-graph $\A$ contains a strictly $\largereal$-balanced subgraph $\A^*$ with $\en_\largereal(\A^*)\geq \en_\largereal(\A)$. 
\end{lemma}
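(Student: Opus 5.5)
The plan is to take a maximizer of the energy over the finite family of all sub-tuples, and then, among all maximizers, one that is minimal with respect to inclusion. Concretely, consider the finite collection $\mathcal S$ of all tuples $\A'=(A_1',\dots,A_k')$ with $A_i'\subseteq A_i$ for every $i$. Since $\mathcal S$ is finite and nonempty (it contains $\A$ itself), the quantity $M=\max_{\A'\in\mathcal S}\en_\largereal(\A')$ exists, and clearly $M\geq \en_\largereal(\A)$.

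Among all tuples in $\mathcal S$ attaining the value $M$, I will choose one, $\A^*$, minimizing the total size $\ab{A_1^*}+\dotsb+\ab{A_k^*}$. Then $\en_\largereal(\A^*)=M\geq\en_\largereal(\A)$. To verify strict balancedness, let $A_i'\subseteq A_i^*$ with at least one inclusion proper. Then $\A'\in\mathcal S$, so $\en_\largereal(\A')\leq M=\en_\largereal(\A^*)$. Equality cannot hold, since $\A'$ would then be a maximizer of strictly smaller total size, contradicting the choice of $\A^*$. Hence $\en_\largereal(\A')<\en_\largereal(\A^*)$, as required.

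There is one degenerate case to check. If $\A$ has no edges, then $M$ may be $0$, and the minimal maximizer is then a tuple of empty sets. This is still formally strictly balanced under the stated convention, because it has no proper sub-tuples, so the definition is satisfied vacuously. If one wants $\A^*$ to be nonempty, note that $M>0$ whenever $e(\A)>0$; in that case every maximizer has an edge, and the argument above goes through unchanged.

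I do not expect any real obstacle here. The only point needing care is using the tie-breaking rule (minimal total size) to turn the weak inequality coming from maximality into a strict one.
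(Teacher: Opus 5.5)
Your proof is correct and is essentially the same as the paper's. Like the paper, you maximize $\en_\largereal$ over all sub-tuples and break ties by minimizing $\sum_i \ab{A_i^*}$, so a proper sub-tuple with equal energy would contradict minimality; your remark on the edgeless case is a correct, harmless addition.
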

\begin{proof}
	Let $A^*_1\subseteq A_1,\dots, A_k^*\subseteq A_k$ be subsets which maximize $\en_\largereal(\A^*)$ among all options; if there are multiple choices achieving the same value, pick one which minimizes $\sum_i \ab{A_i^*}$. As $\A$ is one valid choice, we certainly have $\en_\largereal(\A^*)\geq \en_\largereal(\A)$. Moreover, any subsets $A_i' \subseteq A_i^*$ such that at least one inclusion is proper would be another valid choice with a strictly smaller value of $\sum_i \ab{A_i'}$, hence we conclude that $\A^*$ is strictly $\largereal$-balanced. 
\end{proof}
We remark that this notion is similar to ones used in many other contexts in extremal graph theory, e.g.\ in finding a sublinear expander or in almost-regularizing a graph. In such cases, one picks an appropriate functional of a graph, which is usually some power of its edge count divided by its vertex count, and then passes to a subgraph which maximizes this quantity. 

The main property we will need about strictly $\largereal$-balanced hypergraphs is the following lemma, which gives an upper bound on the number of edges of any subgraph of a strictly balanced hypergraph. The proof is quite simple: if a subgraph had too many edges, its energy would be higher, contradicting the assumption of strict $\largereal$-balancedness.
\begin{lemma}\label{lem:edge count in balanced}
	Let $\A$ be strictly $\largereal$-balanced, let $p=d(\A)$, and let $X_i \subseteq A_i$. Let $\mathcal X =(X_1,\dots,X_k)$. 
	\begin{lemenum}
	
	\item We have\label{lemit:edge count standard}
	\[
		e(\mathcal X) \leq p\prod_i \ab{X_i} +\frac{p}{\largereal}\prod_i \ab{A_i}.
	\]
	
	\item Let $\eta \in (0,\frac 14]$ and suppose that $\largereal \geq \log \frac 1 \eta$. If, furthermore, we have that $\frac{\prod\ab{X_i}}{\prod \ab{A_i}} \in [0,\eta] \cup [(1-\eta)^2,1]$, then we have the stronger bound
	\[
		e(\mathcal X) \leq p\prod_i \ab{X_i} + (\eta \log \tfrac 1 \eta)\frac{p}{\largereal}\prod_i \ab{A_i}.
	\]\label{lemit:edge count strengthened}
	\end{lemenum}
\end{lemma}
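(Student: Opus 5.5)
The plan is to compare energies directly. If some $X_i$ is empty, or $e(\mathcal X)=0$, the bound is trivial, so assume $e(\mathcal X)>0$. Since $\A$ is strictly $\largereal$-balanced, and $\mathcal X$ is a tuple of subsets, we have $\en_\largereal(\mathcal X)\leq \en_\largereal(\A)$ (with equality only if $\mathcal X=\A$, in which case the claim is trivial). Write $P=\prod_i\ab{A_i}$, $x=\prod_i\ab{X_i}$, and $q=d(\mathcal X)=e(\mathcal X)/x$. The inequality $x q^\largereal\leq P p^\largereal$ rearranges to
\[
	\frac{q}{p}\leq \left(\frac{P}{x}\right)^{1/\largereal}.
\]
Hence $e(\mathcal X)=qx\leq p\,x\,(P/x)^{1/\largereal}$. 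So it suffices to bound $(P/x)^{1/\largereal}$ by $1+\frac{P/x}{\largereal}$, or by the stronger version.

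For part (a), apply \cref{lem:bernoulli} with $y=P/x\geq 1$ (we assume $\largereal\geq 1$, which holds since $\largereal>1$). This gives $(P/x)^{1/\largereal}\leq 1+\frac{P}{x\largereal}$. Multiplying by $px$ yields $e(\mathcal X)\leq px+\frac{p}{\largereal}P$, as claimed.

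For part (b), apply \cref{lem:stronger bernoulli} instead, again with $y=P/x$. The hypothesis $x/P\in(0,\eta]\cup[(1-\eta)^2,1]$ translates exactly to $y\in[1/\eta,\infty)\cup[1,(1-\eta)^{-2}]$; the case $x=0$ was excluded above. This gives $y^{1/\largereal}\leq 1+(\eta\log\frac1\eta)\frac{y}{\largereal}$, and multiplying by $px$ gives the strengthened bound.

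There is no real obstacle. The only care needed is with degenerate cases: $x=0$ or $e(\mathcal X)=0$, where the energy convention is $0$ and the bound holds trivially, and $\mathcal X=\A$, where strictness is not available but the bound is immediate since $e(\A)=pP$.
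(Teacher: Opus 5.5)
Your proposal is correct and follows the paper's proof essentially step for step. Like the paper, you dispose of the degenerate cases, use strict $\largereal$-balancedness to compare $\en_\largereal(\mathcal X)$ with $\en_\largereal(\A)$, and rearrange to $d(\mathcal X)/p\leq (P/x)^{1/\largereal}$. You then apply \cref{lem:bernoulli}, respectively \cref{lem:stronger bernoulli}, with $y=P/x$.
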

\begin{proof}
	There is nothing to prove if $p=0$, if some $X_i$ is empty, or if $X_i=A_i$ for all $i$, hence we may assume that $p>0$ and that at least one inclusion is proper. This implies that $\en_\largereal(\mathcal X)<\en_\largereal(\A)$. Unpacking the definition, this implies that
	\begin{equation}\label{eq:unpack energy}
		\left( \frac{d(\mathcal X)}{d(\A)} \right)^\largereal < \frac{\prod_i \ab{A_i}}{\prod_i\ab{X_i}}.
	\end{equation}
	Applying \cref{lem:bernoulli}, we conclude that
	\[
		\frac{d(\mathcal X)}{p} < \left( \frac{\prod_i \ab{A_i}}{\prod_i\ab{X_i}} \right)^{1/\largereal} \leq 1+\frac{\prod_i \ab{A_i}}{\largereal\prod_i\ab{X_i}}
	\]
	and therefore
	\[
		e(\mathcal X) = d(\mathcal X)\prod_i\ab{X_i} \leq \left( p+ \frac{p\prod_i \ab{A_i}}{\largereal\prod_i\ab{X_i}} \right)\prod_i\ab{X_i} = p\prod_i\ab{X_i} + \frac p \largereal \prod_i \ab{A_i},
	\]
	proving \cref{lemit:edge count standard}.

	Now suppose we are in the setting of \cref{lemit:edge count strengthened}. There is nothing to prove if some $X_i$ is empty, so we may assume $\prod_i\ab{X_i} \neq 0$, and then define $y = (\prod_i \ab{A_i})/(\prod_i\ab{X_i})$. By assumption we have $1/y \in [0,\eta]\cup [(1-\eta)^2,1]$, implying that either $y \geq 1/\eta$ or $1 \leq y \leq (1-\eta)^{-2}$. Hence we are in the setting of \cref{lem:stronger bernoulli}; applying this lemma to \eqref{eq:unpack energy}, we conclude that
	\[
		\frac{d(\mathcal X)}{p}\leq y^{1/\largereal} \leq 1+(\eta \log \tfrac 1 \eta) \frac y \largereal = 1+\frac{\eta \log \tfrac 1 \eta}{\largereal}\cdot \frac{\prod_i \ab{A_i}}{\prod_i\ab{X_i}},
	\]
	from which the result follows as above. 
\end{proof}

\section{The basic iteration}\label{sec:basic}
We now introduce some additional terminology. A \emph{$k$-mixed graph} $\G$ is a (non-uniform) hypergraph with $k+1$ parts, labeled $A_1,\dots,A_k, C$.  Its edges consist of some $k$-partite $k$-graph on $\A=(A_1,\dots,A_k)$, as well as some $2$-uniform bipartite graph on $(C, A_1\cup \dots \cup A_k)$. 

We denote by $N_1(c),\dots,N_k(c)$ the neighborhoods of a vertex $c \in C$ in the sets $A_1,\dots,A_k$, respectively. We say that a vertex $c\in C$ \emph{extends} an edge $(a_1,\dots,a_k) \in E(\A)$ if $c$ is adjacent to all the vertices $a_1,\dots,a_k$. We denote by $\ext(\G)$ the total number of extensions in $\G$, namely
\begin{equation}\label{eq:ext def}
	\ext(\G) = \sum_{(a_1,\dots,a_k) \in E(\A)} \sum_{c \in C}\mathbf 1_{c\text{ extends }(a_1,\dots,a_k)} = \sum_{c \in C} e(N_1(c),\dots,N_k(c)).
\end{equation}
We also denote by $\tau_C(\G)$ the \emph{minimum extension density} of $\G$, defined as 
\[
	\tau_C(\G) = \min \left\{ \frac{\ab{\{c \in C : c\text{ extends }(a_1,\dots,a_k)\}}}{\ab C} : (a_1,\dots,a_k) \in E(\A) \right\},
\]
with the convention that $\tau_C(\G)=0$ if $E(\A)=\varnothing$.
That is, for each edge in $E(\A)$, we compute what fraction of vertices in $C$ extend this edge, and then $\tau_C(G)$ is the minimum of these ratios. 
Note that $\tau_C(\G) \in [0,1]$ for every $k$-mixed graph $\G$.
The following is the key lemma of our iterative procedure. 
\begin{lemma}\label{lem:one step}
	Let $k \geq 2$ be an integer. Fix parameters $\tau\in (0,1]$ and $\largereal \geq 2^{k+1}/\tau$.
	Let $\G$ be a $k$-mixed graph with parts $\A,C$, and suppose that $\tau_C(\G)\geq \tau$ and $\A$ is strictly $\largereal$-balanced. 

	Then there exist a vertex $c^* \in C$ and non-empty subsets $A_i^*\subseteq A_i$ satisfying the following properties.
	\begin{enumerate}[label=(\roman*)]
	
	\item $c^*$ is adjacent to all vertices in $A_1^* \cup \dots \cup A_k^*$. \label{it:complete}
	\item We have \label{it:energy bound}
	\[
		\en_\largereal(\A^*)\geq 2^{-2^{k+2}/\tau}\cdot \en_\largereal(\A)
	\]
	\item $\A^*$ is strictly $\largereal$-balanced.\label{it:balanced}
	\end{enumerate}
\end{lemma}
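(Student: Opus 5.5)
The plan is to take $c^*$ to be a vertex of $C$ whose neighbourhood box is as large as possible, to show that the box $\mathcal N=(N_1(c^*),\dots,N_k(c^*))$ loses little energy compared with $\A$, and then to apply \cref{lem:pass to max} to $\mathcal N$. That last application gives a strictly $\largereal$-balanced $\A^*$ with $A_i^*\subseteq N_i(c^*)$ and $\en_\largereal(\A^*)\geq\en_\largereal(\mathcal N)$. So \ref{it:complete} and \ref{it:balanced} hold automatically, and the $A_i^*$ are non-empty as soon as $\en_\largereal(\mathcal N)>0$. Everything therefore reduces to the energy estimate \ref{it:energy bound} for $\mathcal N$. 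Throughout, write $p=d(\A)$ and $P=\prod_i\ab{A_i}$.

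\emph{Step 1: choosing $c^*$.} Since $\tau_C(\G)\geq\tau$, \eqref{eq:ext def} gives $\sum_{c\in C}e(N_1(c),\dots,N_k(c))=\ext(\G)\geq\tau\ab C\,pP$. Applying \cref{lemit:edge count standard} to each box $(N_1(c),\dots,N_k(c))$ and summing over $c\in C$ gives
\[
	\tau pP\ab C\leq p\sum_{c\in C}\prod_i\ab{N_i(c)}+\ab C\frac{p}{\largereal}P .
\]
Because $\largereal\geq 2/\tau$, some $c^*$ therefore satisfies $\prod_i\ab{N_i(c^*)}\geq(\tau-\tfrac1\largereal)P\geq\tfrac\tau2P$. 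I take $c^*$ to be the vertex maximizing $\prod_i\ab{N_i(c)}$.

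\emph{Step 2: density lower bound via the complementary boxes.} Split each $A_i$ as $N_i(c^*)\sqcup(A_i\setminus N_i(c^*))$. This partitions $\A$ into $2^k$ sub-boxes, one of which is $\mathcal N$. Apply \cref{lemit:edge count standard} to each of the other $2^k-1$ sub-boxes. Their volumes sum to $P-\prod_i\ab{N_i(c^*)}$, and their edge counts sum to $e(\A)-e(\mathcal N)=pP-e(\mathcal N)$. Comparing the two gives
\[
	e(\mathcal N)\geq p\prod_i\ab{N_i(c^*)}-(2^k-1)\frac p\largereal P,
	\qquad\text{hence}\qquad
	\frac{d(\mathcal N)}{p}\geq 1-\frac{(2^k-1)P}{\largereal\prod_i\ab{N_i(c^*)}}\geq 1-\frac{2^{k+1}}{\tau\largereal}.
\]
Then $\en_\largereal(\mathcal N)\geq\tfrac\tau2\,P\,p^\largereal(1-x)^\largereal=\tfrac\tau2(1-x)^\largereal\,\en_\largereal(\A)$, where $x=2^{k+1}/(\tau\largereal)\leq1$. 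The factor $\tfrac\tau2$ is harmless, since $\tfrac\tau2\geq 2^{-2/\tau}$.

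\emph{Main obstacle.} The hard part is converting $(1-x)^\largereal$ into $2^{-O(2^k/\tau)}$. When $x\leq\tfrac12$ we have $(1-x)^\largereal\geq 2^{-2x\largereal}=2^{-2^{k+2}/\tau}$, as required. However, the hypothesis $\largereal\geq 2^{k+1}/\tau$ only guarantees $x\leq 1$, so the crude bound above degrades (and gives $d(\mathcal N)>0$ only barely) near the threshold. To fix this I would first sharpen the constants: use the exact factor $2^k-1$ rather than $2^{k+1}$, and use $\tau-\tfrac1\largereal$ rather than $\tfrac\tau2$. If that is not enough, I would invoke \cref{lemit:edge count strengthened} on the sub-boxes whose relative volume is small, since those are exactly the ones where the $\tfrac{p}{\largereal}P$ error is wasteful, to reduce the effective constant below the threshold. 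As a fallback I would choose $c^*$ to maximize $\en_\largereal(\mathcal N)$ directly, or its ratio with $\prod_i\ab{N_i(c)}$, and average a weighted inequality to get a $c^*$ with both large volume and density ratio at least $1-\tfrac12$. Once $d(\mathcal N)/p\geq 1-x$ with $x\leq\tfrac12$ and $x\largereal\leq 2^{k+1}/\tau$ is secured, the bound $\en_\largereal(\mathcal N)\geq 2^{-2^{k+2}/\tau}\en_\largereal(\A)$ follows. In particular $\en_\largereal(\mathcal N)>0$, and \cref{lem:pass to max} completes the proof.
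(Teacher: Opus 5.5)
Your route is correct and differs from the paper's. The step you flag as the main obstacle is already closed by your first proposed fix, so the fallbacks (\cref{lemit:edge count strengthened}, or re-choosing $c^*$) are unnecessary. As written, though, you leave this estimate unverified, so here it is.

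Put $V=\prod_i\ab{N_i(c^*)}\geq(\tau-\frac1\largereal)P$. Your complementary-box bound gives $d(\mathcal N)\geq p(1-x)$ with $x=\frac{2^k-1}{\largereal\tau-1}$. Since $\largereal\tau\geq 2^{k+1}$, we get $x\leq\frac{2^k-1}{2^{k+1}-1}<\frac12$. Also $\largereal\tau-1\geq(1-2^{-k-1})\largereal\tau$, which gives $2x\largereal\leq\frac{2^{k+1}(2^{k+1}-2)}{(2^{k+1}-1)\tau}<\frac{2^{k+1}}{\tau}$. Finally $\tau-\frac1\largereal\geq\frac78\tau\geq 2^{-2/\tau}$. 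Hence
\[
	\en_\largereal(\mathcal N)=V\,d(\mathcal N)^\largereal\geq\Bigl(\tau-\frac1\largereal\Bigr)(1-x)^\largereal\,\en_\largereal(\A)\geq 2^{-2/\tau}\cdot 2^{-2x\largereal}\,\en_\largereal(\A)\geq 2^{-(2^{k+1}+2)/\tau}\,\en_\largereal(\A)\geq 2^{-2^{k+2}/\tau}\,\en_\largereal(\A).
\]
This slack matters even in what you call the good case. With the crude $x=2^{k+1}/(\tau\largereal)$ and prefactor $\frac\tau2$ you only get $\frac\tau2\cdot 2^{-2^{k+2}/\tau}$, slightly below the stated constant. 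So your closing criterion ($x\leq\frac12$ and $x\largereal\leq 2^{k+1}/\tau$) is not sufficient on its own; you need the exact factor $2^k-1$ and the volume bound $\tau-\frac1\largereal$.

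The paper instead chooses $c^*$ purely by averaging extensions, so that $e(\mathcal N)\geq\tau pP$, with no balancedness used at that stage. It then proves $d(\mathcal N)\geq p-\alpha$ with $\alpha=2^kp/(\largereal\tau)$ by contradiction. If the density were below $p-\alpha$, the edge bound would force $V>\tau P$. That converts the error terms $\frac p\largereal P$ from \cref{lemit:edge count standard} on the $2^k-1$ complementary boxes into $\frac{p}{\largereal\tau}V$, contradicting $e(\A)=pP$. The energy is then bounded as $e(\mathcal N)\,d(\mathcal N)^{\largereal-1}$.

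So both proofs rest on the same complementary-box estimate and differ only in where the volume lower bound comes from. The paper gets it for free, and only in the case where it is needed. You get it up front by applying \cref{lemit:edge count standard} to every neighbourhood box and averaging over $C$, at the harmless cost of the $\frac1\largereal$ term. In exchange you get a direct rather than contradiction-based density bound. Both arguments use only the average extension density and yield essentially the same constant.
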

\begin{proof}
	Let us set $p = d(\A)$ and $\alpha = 2^k p/(\largereal \tau)$, and note that $\alpha\leq p/2$ since $\largereal \tau \geq 2^{k+1}$. Note that
	\[
		\ext(\G) \geq \tau_C(\G)e(\A)\ab C \geq \tau p\ab{A_1}\dotsb\ab{A_k}\ab{C},
	\]
	and hence from \eqref{eq:ext def} we have that there is some vertex $c^* \in C$ for which
	\[
		e(N_1(c^*),\dots,N_k(c^*)) \geq \tau p \prod_i \ab{A_i}.
	\]
	We fix such a $c^*$, and let
	$A_i^+ = N_i(c^*)$ and $A_i^- = A_i \setminus A_i^+$. 
	
	Our key claim is that
	\begin{equation}\label{eq:density LB}
		d(\A^+) \geq p-\alpha. 
	\end{equation}
	Suppose for contradiction that $d(\A^+)< p-\alpha\leq p$. From the above, we find that
	\[
		p\tau \prod_i \ab{A_i} \leq e(\A^+)=d(\A^+) \prod_i \ab{A_i^+} <  p\prod_i \ab{A_i^+},
	\]
	implying that 
	\begin{equation}\label{eq:A0B0 LB}
		\frac{1}{\tau}\prod_i \ab{A_i^+} > \prod_i \ab{A_i}.
	\end{equation}
	Let $\sigma \in \{+,-\}^k$. 
	We now apply \cref{lemit:edge count standard} to $(X_1,\dots,X_k) = (A_1^{\sigma_1},\dots,A_k^{\sigma_k})\eqqcolon \A^\sigma$ to find that
	\[
		e(\A^\sigma) \leq p \prod_i \ab{A_i^{\sigma_i}} + \frac p \largereal \prod_i \ab{A_i} < p \prod_i \ab{A_i^{\sigma_i}} + \frac p {\largereal\tau} \prod_i \ab{A_i^+},
	\]
	where the second inequality is from \eqref{eq:A0B0 LB}. 
	Moreover, in the case $\sigma = (+,\dots,+)$, we actually have the stronger bound
	\[
		e(\A^+) < (p-\alpha)\prod_i \ab{A_i^+} = p\prod_i \ab{A_i^+} - \alpha \prod_i \ab{A_i^+},
	\]
	using our assumption that \eqref{eq:density LB} is false. Summing these $2^k$ inequalities, and using the fact that each edge in $\A$ lies in precisely one tuple $\A^\sigma$, we find that
	\begin{align*}
		e(\A) &= \sum_{\sigma \in \{+,-\}^k} e(\A^\sigma)\\
		&< \sum_{\sigma \in \{+,-\}^k} p \prod_i \ab{A_i^{\sigma_i}} + \left( (2^k-1) \frac{p}{\largereal \tau} - \alpha \right) \prod_i \ab{A_i^+}\\
		&< p\sum_{\sigma \in \{+,-\}^k} \prod_i \ab{A_i^{\sigma_i}} =p \prod_i \ab{A_i},
	\end{align*}
	using our choice of $\alpha$ in the second inequality. This contradicts our assumption that $p = d(\A)$, and concludes the proof of \eqref{eq:density LB}.

	We now show how to use \eqref{eq:density LB} to complete the proof of the lemma. To do this, we first compute that
	\begin{align*}
		\en_\largereal(\A^+) &=e(\A^+) d(\A^+)^{\largereal-1}\\
		&\geq \left(\tau p \prod_i \ab{A_i}\right)(p-\alpha)^{\largereal-1}\\
		&=\left(p^\largereal\prod_i \ab{A_i} \right)(\tau p^{1-\largereal}(p-\alpha)^{\largereal-1})\\
		&=\en_\largereal(\A) \left( \tau \left( 1-\frac \alpha p \right)^{\largereal-1} \right).
	\end{align*}
	Recall that $\tau\leq 1$, hence
	the inequality $\tau \geq 2^{-1/\tau}$ holds. Additionally, since $\alpha/p\leq 1/2$, the inequality $1-\alpha/p\geq 2^{-2\alpha/p}$ holds. Therefore,
	\[
		\tau \left( 1-\frac \alpha p \right)^{\largereal-1} \geq \tau \left( 1-\frac \alpha p \right)^{\largereal}\geq 2^{-1/\tau} \cdot 2^{-2\largereal\alpha/p} \geq 2^{-2^{k+2}/\tau}.
	\]
	Finally, we pass to a strictly $\largereal$-balanced subgraph $(A_1^*,\dots,A_k^*)$ of $(A_1^+,\dots,A_k^+)$, as given to us by \cref{lem:pass to max}. This automatically gives us \ref{it:balanced}, as well as \ref{it:energy bound} by the computation above. As $c^*$ is adjacent to every vertex of $A_i^+ \supseteq A_i^*$, we also have \ref{it:complete}.
\end{proof}
We remark that the proof of \cref{lem:one step} would work verbatim if we only knew that the \emph{average} number of extensions of an edge in $E(\A)$ were at least $\tau \ab C$, rather than the \emph{minimum}. However, our next result is obtained by iterating \cref{lem:one step}, and controlling the minimum is much simpler than controlling the average during such an iteration. 
\begin{proposition}\label{prop:basic iteration}
	Let $k \geq 2$ be an integer, and fix $\tau_0,p \in (0,\frac 12]$. Let $n \geq p^{-10^k/\tau_0}$, and let $\G$ be a $k$-mixed graph with parts $\A,C$, where $\min \{\ab{A_1},\dots,\ab{A_k},\ab C\} \geq n$. If $\tau_C(\G)\geq \tau_0$ and $d(\A)\geq p$, then there are subsets $A_i^\dagger \subseteq A_i, C^\dagger \subseteq C$ satisfying 
	\begin{enumerate}[label=(\alph*)]
		\item $\ab{C^\dagger}= \flo{(\tau_0 \log n)/(2^{k+5}\log \frac 1p)}$,\label{it:Cdagger size}
		\item $\ab{A_i^\dagger}\geq \sqrt n$ for every $i$,  \label{it:Adagger size}
		\item $d(\A^\dagger) \geq p/2$, and \label{it:Adagger density}
		\item Every vertex of $C^\dagger$ is adjacent to every vertex of $A_1^\dagger \cup \dots \cup A_k^\dagger$.\label{it:daggers complete}
	\end{enumerate}
\end{proposition}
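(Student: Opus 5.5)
The plan is to iterate \cref{lem:one step} exactly $s \coloneqq \flo{(\tau_0 \log n)/(2^{k+5}\log \frac 1p)}$ times, tracking the $\largereal$-energy. If $s=0$ the statement is trivial (take $C^\dagger=\varnothing$ and $A_i^\dagger=A_i$), so assume $s\geq 1$. The parameters are
\[
	\tau = \tau_0/2, \qquad \largereal = \frac{2^{k+3}s}{\tau_0},
\]
so that $\largereal \leq \frac{\log n}{4\log \frac 1p}$ and $\largereal \geq 2^{k+3}/\tau_0 \geq 2^{k+1}/\tau$, as \cref{lem:one step} requires.

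\textbf{Normalization.} First I would replace each $A_i$ by a subset of size exactly $n$ with density still at least $p$. This is possible by averaging: a uniformly random $n$-subset of each part has expected density exactly $d(\A)\geq p$. Restricting $\A$ only deletes edges of $E(\A)$, so the minimum extension density is unaffected and $\tau_C\geq \tau_0$ still holds. Then I apply \cref{lem:pass to max} to get a strictly $\largereal$-balanced $\A\up 0$ with
\[
	\en_\largereal(\A\up 0)\geq p^\largereal n^k.
\]

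\textbf{Iteration.} Inductively, suppose we have $\A\up\ell$ strictly $\largereal$-balanced and distinct vertices $c_1,\dots,c_\ell$, each complete to $\A\up\ell$. Apply \cref{lem:one step} to the mixed graph with parts $\A\up\ell$ and $C\setminus\{c_1,\dots,c_\ell\}$. The one point to check is the minimum extension density, which I expect to be the only mildly delicate step. Every edge of $\A\up\ell$ is an edge of the original graph, so it is extended by at least $\tau_0\ab C$ vertices of $C$. At most $\ell\leq s$ of these were removed, so its extension density in the smaller $C$ is at least $\tau_0 - s/n\geq \tau_0/2$. Here $s=O(\log n)\ll \tau_0 n$ by the lower bound on $n$, so this hypothesis holds.

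The lemma then returns $c_{\ell+1}$ and a strictly balanced $\A\up{\ell+1}\subseteq \A\up\ell$ that is complete to $c_{\ell+1}$, with energy loss at most a factor $2^{-2^{k+3}/\tau_0}$. Since the sets are nested, every $c_j$ is complete to the final sets, which gives \ref{it:daggers complete}. After $s$ steps, set $C^\dagger=\{c_1,\dots,c_s\}$ and $\A^\dagger=\A\up s$. This gives
\[
	\en_\largereal(\A^\dagger)\geq p^\largereal n^k\, 2^{-2^{k+3}s/\tau_0} = p^\largereal n^k 2^{-\largereal}.
\]

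\textbf{Extracting (b) and (c).} Since each $\ab{A_j^\dagger}\leq n$ and $d\leq 1$, the energy is at most $\ab{A_i^\dagger}n^{k-1}$. Hence
\[
	\ab{A_i^\dagger}\geq n(p/2)^\largereal.
\]
We have $\largereal\log\frac 2p\leq 2\largereal\log\frac 1p\leq \frac12\log n$, using $p\leq\frac12$ and the choice of $\largereal$. So $\ab{A_i^\dagger}\geq\sqrt n$, which is \ref{it:Adagger size}. Similarly the energy is at most $n^k d(\A^\dagger)^\largereal$, so $d(\A^\dagger)^\largereal\geq (p/2)^\largereal$, giving \ref{it:Adagger density}. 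Finally, \ref{it:Cdagger size} holds by construction.

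The hypothesis $n\geq p^{-10^k/\tau_0}$ is used only to guarantee $s/n\leq \tau_0/2$. It also keeps the relevant quantities nontrivial; in particular, if $s\geq 1$ then $\largereal>1$, so the energy is well defined.
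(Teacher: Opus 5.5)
Your proof is correct and is essentially the paper's argument: you iterate \cref{lem:one step} with $\tau=\tau_0/2$ against $C$ minus the already-chosen vertices, and read off (b) and (c) from the accumulated energy bound. The differences are cosmetic. You tie $\largereal=2^{k+3}s/\tau_0$ to $s$, so the total loss is exactly $2^{-\largereal}$ and $\largereal\geq 2^{k+1}/\tau$ follows from $s\geq 1$ rather than from the lower bound on $n$, with $s=0$ handled separately. You also normalize $\ab{A_i}=n$, which the paper avoids by comparing directly with $\prod_i\ab{A_i}$.
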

\begin{proof}
	Let
	\[
		s = \bflo{\frac{\tau_0 \log n}{2^{k+5}\log \frac 1p}} \qquad \text{and} \qquad \largereal=\frac{\log n}{4\log \frac 1 p}.
	\]
	We will define a nested sequence of induced subgraphs $\G \supseteq \G\up 0 \supseteq \G\up 1 \supseteq \dots \supseteq \G\up s$ on vertex sets $(A_1\up \ell,\dots,A_k\up \ell, C\up \ell)_{\ell =0}^s$, satisfying the following properties.
	\begin{enumerate}[label=(\roman*)]
		\item $\ab{C\up\ell}=\ab C - \ell$. \label{property:C size}
		\item Every vertex in $C \setminus C\up \ell$ is adjacent to all vertices in $A_1\up \ell \cup \dots \cup A_k \up \ell$. \label{property:complete}
		\item\label{property:energy} $\A\up \ell$ is strictly $\largereal$-balanced and satisfies $$\en_\largereal(\A\up\ell) \geq 2^{-2^{k+3}\ell/\tau_0} p^{\largereal}\prod_i \ab{A_i}.$$ 
	\end{enumerate}
	To begin, we set $C\up 0 = C$, so that properties \ref{property:C size} and \ref{property:complete} are trivially satisfied. We also let $\A\up 0$ be a strictly $\largereal$-balanced subgraph of $\A$, as given by \cref{lem:pass to max}, for which we have
	\[
		\en_\largereal(\A\up 0) \geq \en_\largereal(\A)=d(\A)^\largereal \prod_i\ab{A_i}\geq p^\largereal \prod_{i}\ab{A_i},
	\]
	so \ref{property:energy} is also satisfied for $\ell=0$.

	Inductively, suppose we have defined $\G\up \ell$ for some $\ell<s$, and aim to define $\G\up{\ell+1}$. 
	Note that $s \leq \tau_0 n/2$, hence $\ab{C \setminus C\up \ell} = \ell < \tau_0 n/2 \leq \tau_0 \ab C/2$ by \ref{property:C size}. As a consequence, every edge in $A\up \ell$ has at least $\tau_0\ab C/2$ extensions to vertices in $C\up \ell$. That is, $\tau_{C\up \ell}(\G\up \ell) \geq \tau_0/2$.

	We are thus in a position to apply \cref{lem:one step} to $\G\up \ell$, with parameters $\tau=\tau_0/2$ and $\largereal = (\log n)/(4\log \frac 1p)$, as defined above. In order to apply the lemma, we must check that $\largereal \geq 2^{k+1}/\tau=2^{k+2}/\tau_0$, which indeed holds by our assumption that $n\geq p^{-10^k/\tau_0}$. We obtain from \cref{lem:one step} a vertex $c^*$ and sets $A_i^*\subseteq A_i\up \ell$. We now set $C\up {\ell+1}=C\up \ell \setminus\{c^*\}$ and $A_i\up {\ell+1}=A^*_i$, and claim that this satisfies the desired properties.

	Property \ref{property:C size} is immediate since we removed one vertex from $C\up \ell$, as is property \ref{property:complete}, since the newly added vertex to $C \setminus C\up{\ell+1}$, namely $c^*$, is adjacent to all vertices in $A\up{\ell+1}_{1}\cup \dots \cup A_{k}\up{\ell+1}$ by the statement of \cref{lem:one step}. Moreover, we indeed have that $\A\up{\ell+1}$ is strictly $\largereal$-balanced, and satisfies
	\[
		\en_\largereal(\A\up{\ell+1}) \geq 2^{-2^{k+3}/\tau_0} \en_\largereal(\A\up \ell) \geq 2^{-2^{k+3}(\ell+1)/\tau_0} p^\largereal\prod_i\ab{A_i},
	\]
	proving \ref{property:energy}.

	Finally, we stop this process upon defining $\G\up s$. Let $C^\dagger = C \setminus C\up s, A_i^\dagger = A_i\up s$. By construction, we have $\ab{C^\dagger}=s $, proving \ref{it:Cdagger size}. We also automatically get \ref{it:daggers complete} from property \ref{property:complete}. It remains to prove \ref{it:Adagger size} and \ref{it:Adagger density}. For both of these, we note that
	\[
		2^{-2^{k+3}s/\tau_0} \geq 2^{-(\log n)/(4\log (1/p))} = 2^{-\largereal} \geq n^{-1/4},
	\]
	where the final inequality holds since $p\leq \frac 12$. Additionally, by our choice of $\largereal$, we see that $p^{\largereal}=n^{-1/4}$. Therefore,
	\[
		d(\A^\dagger)^\largereal\prod_i\ab{A_i^\dagger}=\en_\largereal(\A^\dagger) \geq 2^{-2^{k+3}s/\tau_0} p^\largereal \prod_i \ab{A_i} \geq n^{-1/4} \cdot n^{-1/4} \cdot \prod_i\ab{A_i},
	\]
	by property \ref{property:energy} and our choices of $s$ and $\largereal$. By combining this with the trivial upper bounds $\ab{A_i^\dagger}\leq \ab{A_i}$ and $d(\A^\dagger)$, we conclude that $\ab{A_i^\dagger}\geq \ab{A_i}/\sqrt n \geq \sqrt n$ for all $i$, proving \ref{it:Adagger size}. Similarly, we find that
	\[
		d(\A^\dagger)^\largereal \geq 2^{-\largereal}\cdot p^\largereal,
	\]
	which yields \ref{it:Adagger density}.
\end{proof}

As a corollary, we can immediately find a large complete tripartite graph in a graph with many triangles. We will eventually apply this as an intermediate step in our switching argument, so the statement of the following result is optimized with that application in mind. We denote by $\kappa_3(G)$ the triangle density of a tripartite graph $G$, that is, the total number of triangles divided by the product of the sizes of the three parts. 
\begin{corollary}\label{cor:good for switcher}
	Let $\kappa,\nu \in (0,\frac 12]$ be parameters, and let $m \geq \kappa^{-400/\nu}$.
	Let $G$ be a tripartite graph with parts $A,B,C$, where $\min \{\ab A , \ab B , \ab C\}\geq m$. If $\kappa_3(G)\geq \kappa$ and $\kappa_3(G)/d(A,B)\geq \nu$, then $G$ contains a copy of $K_{s,t,t}$, where
	\[
		s = \bflo{\frac{\nu \log m}{512\log \frac 1 \kappa}}  \qquad \text{and}\qquad t = \bflo{\frac{\log m}{12\log \frac1 \kappa}}.
	\]
\end{corollary}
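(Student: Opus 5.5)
Our plan is to reduce \cref{cor:good for switcher} to \cref{prop:basic iteration} with $k=2$, taking $A_1=A$, $A_2=B$ and the $2$-uniform part of the mixed graph to be the edges of $E(A,B)$. Afterwards we apply \cref{thm:kst} to the pair $(A^\dagger,B^\dagger)$. The only missing hypothesis is the \emph{minimum} extension density: we only know the average. Write $q=d(A,B)$. The number of triangles is $\kappa_3(G)\ab A\ab B\ab C$, so an average edge of $E(A,B)$ lies in $(\kappa_3(G)/q)\ab C\geq \nu\ab C$ triangles.

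\emph{Step 1: cleaning.} Delete from $E(A,B)$ every edge lying in fewer than $\frac\nu2\ab C$ triangles. The deleted edges account for fewer than $\frac\nu2 q\ab A\ab B\ab C\leq \frac12\kappa_3(G)\ab A\ab B\ab C$ triangles, so at least half the triangles survive. Let $G'$ be the resulting graph, and take as $\G$ the $2$-mixed graph with parts $(A,B),C$, edges $E_{G'}(A,B)$, and all $C$-edges of $G$. Then $\tau_C(\G)\geq \nu/2=:\tau_0$. Moreover each surviving edge lies in at most $\ab C$ triangles, so
\[
	d_{G'}(A,B)\geq \tfrac12\kappa_3(G)\geq \kappa/2 \eqqcolon p.
\]
Both $\tau_0$ and $p$ lie in $(0,\frac12]$.

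\emph{Step 2: apply \cref{prop:basic iteration}.} This needs $m\geq p^{-100/\tau_0}=(2/\kappa)^{200/\nu}$. Since $\kappa\leq\frac12$ we have $2/\kappa\leq \kappa^{-2}$, so this follows from $m\geq \kappa^{-400/\nu}$; this is exactly where the constant $400$ comes from. The proposition produces $C^\dagger$ with
\[
	\ab{C^\dagger}=\bflo{\frac{\tau_0\log m}{2^7\log\frac1p}},
\]
together with $\ab{A^\dagger},\ab{B^\dagger}\geq\sqrt m$, $d(A^\dagger,B^\dagger)\geq p/2\geq \kappa^3/4$ or so, and $C^\dagger$ complete to $A^\dagger\cup B^\dagger$. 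Using $\log\frac1p=\log\frac2\kappa\leq 2\log\frac1\kappa$ and $\tau_0=\nu/2$, we get
\[
	\ab{C^\dagger}\geq \bflo{\frac{\nu\log m}{512\log\frac1\kappa}}=s,
\]
so we may shrink $C^\dagger$ to exactly $s$ vertices.

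\emph{Step 3: apply \cref{thm:kst}.} Apply it to $(A^\dagger,B^\dagger)$ with density parameter $p/2=\kappa/4\leq\frac12$. This gives a $K_{t',t'}$ with
\[
	t'=\bflo{\frac{\tfrac12\log m}{2\log\frac4\kappa}}.
\]
Since $\kappa\leq\frac12$ we have $\log\frac4\kappa\leq 3\log\frac1\kappa$, hence $t'\geq \flo{\log m/(12\log\frac1\kappa)}=t$. These $K_{t,t}$ edges are edges of $G'\subseteq G$, and $C^\dagger$ is complete to them, so together they form a $K_{s,t,t}$ in $G$.

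The main point needing care is Step 1. We must make sure that deleting edges does not change the $C$-adjacencies, which it does not, since only $A$--$B$ edges are removed. We must also verify that the density lower bound survives: the trivial bound (triangles per edge at most $\ab C$) gives $d_{G'}\geq\kappa/2$ independently of $q$. The remaining work is routine constant bookkeeping in the logarithms.
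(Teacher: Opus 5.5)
Your proof is correct and follows the paper's argument: delete the $A$--$B$ edges in few triangles to get $\tau_C\geq\nu/2$ while keeping half the triangles, then apply \cref{prop:basic iteration} with $k=2$, then apply \cref{thm:kst} to $(A^\dagger,B^\dagger)$. The only difference is bookkeeping: you take $p=\kappa/2$ where the paper takes $p=\kappa^2$, so your constants come from $\log\frac 2\kappa\leq 2\log\frac1\kappa$ and $\log\frac4\kappa\leq 3\log\frac1\kappa$ rather than holding exactly. (The aside ``$p/2\geq\kappa^3/4$ or so'' is harmless but unnecessary, since Step 3 uses $p/2=\kappa/4$ directly.)
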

\begin{proof}
	Let $\tau = \nu/2$, and let $F\subseteq E(A,B)$ denote the set of edges lying in at most $\tau\ab C$ triangles of $G$. Then let $G'$ be the subgraph of $G$ obtained by discarding the edges of $F$ (and not discarding any vertices); by construction, we have $\tau_C(G')\geq \tau$. 
	The total number of triangles using an edge in $F$ is at most 
	\[
		\tau \ab C \cdot e(A,B) =\tau d(A,B) \ab A \ab B \ab C \leq \frac \nu 2 \cdot \frac{\kappa_3(G)}{\nu}\ab A \ab B \ab C =\frac {\kappa_3(G)} 2 \ab A \ab B \ab C.
	\]
	Thus, at least half the triangles in $G$ are also present in $G'$. In particular,
	\[
		e_{G'}(A,B) \geq \frac{\kappa_3(G) \ab A \ab B \ab C}{2\ab C}  \geq \frac\kappa 2 \ab A \ab B,
	\]
	hence $d_{G'}(A,B)\geq \kappa/2\geq \kappa^2$, using our assumption $\kappa \leq \frac 12$. 

	We may now apply \cref{prop:basic iteration} with parameters $k=2, \tau_0=\tau, p = \kappa^2$ to $\G=G'$; note that a $2$-mixed graph is the same as a tripartite graph, and that $m$ is sufficiently large to apply \cref{prop:basic iteration} since $m\geq \kappa^{-400/\nu}= p^{-100/\tau}$. We obtain sets $A^\dagger \subseteq A, B^\dagger \subseteq B, C^\dagger \subseteq C$, satisfying
	\begin{enumerate}[label=(\alph*)]
		\item $\ab{C^\dagger} = \flo{(\tau\log m)/(2^7\log \frac 1 p)}=s$,
		\item $\ab{A^\dagger}\geq \sqrt m$ and $\ab{B^\dagger}\geq \sqrt m$,
		\item $d(A^\dagger,B^\dagger)\geq p/2$, and
		\item every vertex of $C^\dagger$ is adjacent to every vertex of $A^\dagger\cup B^\dagger$. 
	\end{enumerate}
	In particular, given the last point, it suffices to show that $(A^\dagger,B^\dagger)$ contains a copy of $K_{t,t}$. But this is an immediate consequence of \cref{thm:kst} when we note that
	\[
		\bflo{\frac{\log \min \{\ab{A^\dagger},\ab{B^\dagger}\}}{2\log \frac 2p}} \geq \bflo{\frac{\log \sqrt m}{2\log \frac{1}{\kappa^3}}} = \bflo{\frac{\log m}{12 \log \frac 1 \kappa}}=t.\qedhere
	\]
\end{proof}

\section{An improved bound via switching}\label{sec:switching}

In order to simplify the presentation of the argument, it is helpful to introduce the following definition, which precisely captures the kind of subgraph whose presence would be useful in the argument above.
\begin{definition}
	Let $H$ be a tripartite graph with parts $X,Y,Z$. We call $H$ an \emph{$(m,\mu)$-switcher} if $\min \{\ab X, \ab Y, \ab Z\}\geq m$,  $\kappa_3(H)\geq \mu^2$, and $\kappa_3(H)/d(X,Y)\geq \mu$.
\end{definition}
Note that this condition is monotone in both parameters, in the sense that every $(m,\mu)$-switcher is also an $(m',\mu')$-switcher for all $m' \leq m$ and all $\mu' \leq \mu$. 
From \cref{prop:basic iteration}, we see that if $\mu\leq \frac 12$ and $m$ is sufficiently large, then every $(m,\mu)$-switcher contains a copy of $K_{s,t,t}$, where $s = \Omega((\mu\log m)/(\log \frac 1 \mu))$ and $t = \Omega((\log m)/(\log \frac 1 \mu))$.

Using these ideas, we prove the following strengthening of \cref{lem:one step}. Note that we have added the assumption that no subgraph of $G$ is a switcher, and have obtained a stronger conclusion in \ref{switchit:energy bound}.
\begin{lemma}\label{lem:one step switching}
	Fix parameters $\tau \in (0,\frac 12]$ and $\largereal \geq 50/\tau$. Let $G$ be a tripartite graph with parts $(A,B,C)$, and suppose that $\tau_C(G)\geq \tau$. Let  $m \leq \frac \tau 4 \min \{\ab A, \ab B, \ab C\}$ be a positive integer, and let $p = d(A,B)$. Suppose further that $(A,B)$ is strictly $\largereal$-balanced, and that no subgraph of $G$ is an $(m, \sqrt{p\tau}/4)$-switcher. 

	Then there exist a vertex $c^* \in C$ and non-empty subsets $A^* \subseteq A, B^* \subseteq B$ satisfying the following properties.
	\begin{enumerate}[label=(\roman*)]
	\item $c^*$ is adjacent to all vertices in $A^*$ and $B^*$. \label{switchit:complete}
	\item We have \label{switchit:energy bound}
	\[
		\en_\largereal(A^*,B^*)\geq \tau^{20/\sqrt {p\tau}}\en_\largereal(A,B).
	\]
	\item $(A^*,B^*)$ is strictly $\largereal$-balanced.\label{switchit:balanced}
	\end{enumerate}
\end{lemma}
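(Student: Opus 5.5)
The plan is to rerun the proof of \cref{lem:one step} with $k=2$. The one change is that we first use the absence of switchers to select $c^*$ with large neighbourhoods in both $A$ and $B$. That makes the ``missing edges'' term in the discrepancy argument much larger, so a much smaller density loss $\alpha$ suffices.

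\textbf{Step 1: parameters.} Set $\mu=\sqrt{p\tau}/4$ and $\lambda=\sqrt{p\tau}$, and write $T$ for the set of triangles. Since $\tau_C(G)\geq\tau$, we have $\ab T\geq \tau p\ab A\ab B\ab C$.

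\textbf{Step 2: most triangles go through vertices of $C$ with large neighbourhoods.} Let $C_A$ be the set of $c\in C$ with $\ab{N_A(c)}<\lambda\ab A$, and suppose the triangles through $C_A$ number at least $\frac14\ab T$.
\begin{itemize}
\item If $\ab{C_A}<m\leq\frac\tau4\ab C$, these triangles number at most $m\ab A\ab B<\frac14\ab T$, using $p\leq 1$. So $\ab{C_A}\geq m$.
\item Consider the tripartite graph $H$ with parts $X=A$, $Y=C_A$, $Z=B$. Its triangle density satisfies $\kappa_3(H)\geq \frac{\tau p}{4}\cdot\frac{\ab C}{\ab{C_A}}\geq \frac{p\tau}{4}\geq\mu^2$.
\item Also $d(A,C_A)<\lambda$, so $\kappa_3(H)/d(X,Y)>\frac{p\tau}{4\lambda}=\mu$.
\item Hence $H$ would be an $(m,\mu)$-switcher, which is forbidden.
\end{itemize}
The same holds for $C_B$, defined analogously with $B$ in place of $A$. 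Therefore at least half of all triangles pass through $C\setminus(C_A\cup C_B)$. By averaging there is $c^*$ in this set with $e(A^+,B^+)\geq\frac{\tau p}{2}\ab A\ab B$, where $A^+=N_A(c^*)$ and $B^+=N_B(c^*)$. Moreover $\ab{A^+}\geq\lambda\ab A$ and $\ab{B^+}\geq\lambda\ab B$, so
\[
\ab{A^+}\ab{B^+}\geq p\tau\ab A\ab B.
\]

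\textbf{Step 3: density loss.} The aim is $d(A^+,B^+)\geq p-\alpha$ with
\[
\alpha\approx \frac{C_0\,p\log\frac1\tau}{\largereal\sqrt{p\tau}}
\]
for an absolute constant $C_0$. Suppose instead that $d(A^+,B^+)<p-\alpha$.
\begin{itemize}
\item Sum the edge counts over the four pairs $(A^{\sigma_1},B^{\sigma_2})$, exactly as in \cref{lem:one step}.
\item The deficit from the pair $(A^+,B^+)$ is now at least $\alpha\ab{A^+}\ab{B^+}\geq \alpha p\tau\ab A\ab B$.
\item The surplus of each of the other three pairs is to be controlled by \cref{lemit:edge count strengthened} with $\eta\asymp\sqrt{p\tau}$. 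This gives error terms $(\eta\log\frac1\eta)\frac p\largereal\ab A\ab B$, so a contradiction follows once $\alpha p\tau\gtrsim \eta\log\frac1\eta\cdot\frac p\largereal$, which matches the choice of $\alpha$.
\end{itemize}

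\textbf{Step 4: energy bound.} As in \cref{lem:one step},
\[
\en_\largereal(A^+,B^+)\geq \tfrac\tau2\Bigl(1-\tfrac\alpha p\Bigr)^{\largereal}\en_\largereal(A,B)\geq \tfrac\tau2\, 2^{-2\largereal\alpha/p}\,\en_\largereal(A,B)\geq \tau^{20/\sqrt{p\tau}}\en_\largereal(A,B).
\]
The middle inequality needs $\alpha\leq p/2$, which follows from $\largereal\geq 50/\tau$. Finally, pass to a strictly $\largereal$-balanced subgraph via \cref{lem:pass to max}; this gives \ref{switchit:complete}, \ref{switchit:energy bound} and \ref{switchit:balanced}.

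\textbf{Main obstacle.} \Cref{lemit:edge count strengthened} applies only when the relative size $\ab X\ab Y/(\ab A\ab B)$ of a pair lies in $[0,\eta]\cup[(1-\eta)^2,1]$. For a cross pair of intermediate size, only \cref{lemit:edge count standard} is available, with error $\frac p\largereal\ab A\ab B$.

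My first attempt would be to show that an intermediate-size cross pair forces $\ab{A^+}\ab{B^+}$ to be a constant fraction of $\ab A\ab B$, so that the deficit $\alpha\ab{A^+}\ab{B^+}$ itself absorbs the standard error. It is not clear that this works. If $\ab{A^+}\ab{B^-}$ has relative size in $(\eta,(1-\eta)^2)$, then $\ab{A^+}>\eta\ab A$ and $\ab{B^-}>\eta\ab B$. This does not prevent $\ab{B^+}$ from being as small as about $\lambda\ab B$.

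If it fails, the fallback is to use the non-switcher hypothesis a second time, aiming either to bound $\ab{A^+}$ and $\ab{B^+}$ from above or to choose $c^*$ among the many good candidates so that each of $A^\pm$ and $B^\pm$ is either tiny or almost all of its part. Carrying out this case analysis while keeping the exponent $20/\sqrt{p\tau}$ is where I expect the real work to lie.
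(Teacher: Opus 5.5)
Your proposal has a genuine gap, at exactly the point you flag: Step 3 is never completed, and neither suggested repair is the right one. Nothing stops $\ab{A^+}\approx\ab A/2$ while $\ab{B^+}\approx\lambda\ab B$. A cross pair can then have intermediate relative size, where \cref{lemit:edge count standard} only gives an error $\frac p\largereal\ab A\ab B$. A deficit $\alpha\ab{A^+}\ab{B^+}$ with your small $\alpha$ cannot absorb this. The non-switcher hypothesis supplies lower bounds on $\ab{A^+},\ab{B^+}$, not a dichotomy of the form ``tiny or almost everything'', and none is needed.

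The missing idea is that the contradiction argument never uses that $(A^+,B^+)$ is the neighbourhood pair of $c^*$. It only uses that it is a pair of subsets of density below $p-\alpha$ whose size product is of order $\tau\ab A\ab B$. So, inside the contradiction argument only, replace $A^+,B^+$ by uniformly random subsets of sizes $\min\{\ab{A^+},\flo{\eta\ab A}\}$ and $\min\{\ab{B^+},\flo{\eta\ab B}\}$.
\begin{itemize}
\item The expected density of the new pair equals $d(A^+,B^+)$, so some choice still has density below $p-\alpha$.
\item With $A^-,B^-$ now the complements of the shrunken sets, $(A^+,B^-)$ and $(A^-,B^+)$ have relative size at most $\eta$, and $(A^-,B^-)$ has relative size at least $(1-\eta)^2$. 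So \cref{lemit:edge count strengthened} applies to all three cross pairs.
\item The paper takes $\eta=\sqrt{\tau/p}$, not $\eta\asymp\sqrt{p\tau}$, matched to the degree threshold $\sqrt{p\tau}/4$ so that their product is $\tau/4$.
\item The contradiction hypothesis itself gives $\ab{A^+}\ab{B^+}>\frac\tau2\ab A\ab B$, since $p\ab{A^+}\ab{B^+}>e(A^+,B^+)\ge\frac{\tau p}2\ab A\ab B$. Combined with the previous point, the shrunken product stays of order $\tau\ab A\ab B$ (the paper gets $\frac\tau5\ab A\ab B$).
\item The choice $\alpha=15\eta p\log\frac1\eta/(\tau\largereal)$ then yields the contradiction.
\end{itemize}
The conclusion $d\ge p-\alpha$ is thus for the true neighbourhood pair, and your Step 4 goes through as written.

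There are also smaller problems.
\begin{itemize}
\item \cref{lemit:edge count strengthened} needs $\eta\le\frac14$, which rests on a step you omit. If $p\le16\tau$, then $\kappa_3(G)\ge p\tau$ and $\kappa_3(G)/d(A,B)\ge\tau\ge\sqrt{p\tau}/4$, so $G$ itself is an $(m,\sqrt{p\tau}/4)$-switcher. Hence $p>16\tau$.
\item Your claim that $\alpha\le p/2$ follows from $\largereal\ge50/\tau$ is false for your $\alpha$. It only gives $\alpha/p\lesssim\frac{C_0}{50}\sqrt{\tau/p}\,\log\frac1\tau$, which is unbounded when $p$ is a constant multiple of $\tau$. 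The cause is the weak deficit $\alpha p\tau\ab A\ab B$ together with $\eta\asymp\sqrt{p\tau}$. With a deficit of order $\alpha\tau\ab A\ab B$ and $\eta=\sqrt{\tau/p}\le\frac14$, one gets $\alpha/p\le\frac{15}{50}\eta\log\frac1\eta\le0.15$. Only afterwards is $\eta\ge\sqrt\tau$ used to bound $2\largereal\alpha/p\le15\log\frac1\tau/\sqrt{p\tau}$.
\item In Step 2, the triangles through $C_A$ number at most $\ab{C_A}\,e(A,B)$, not $m\ab A\ab B$. With the latter, the inequality you need goes the wrong way, because $p\le1$.
\end{itemize}
With these fixes, your selection of $c^*$ is a fine variant of the paper's.
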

\begin{proof}
	Let $\kappa=\kappa_3(G)$. Note that the total number of triangles in $G$ equals $\kappa \ab A \ab B \ab C$, but is also at least $e(A,B) \cdot \tau \ab C$, since every edge between $A$ and $B$ lies in at least $\tau_C(G)\ab C\geq \tau\ab C$ triangles; this in particular implies that $\kappa \geq p\tau$.
	Let $C_0\subseteq C$ consist of all vertices $c \in C$ which lie in at most $\frac 14 \kappa \ab A \ab B$ triangles. The total number of triangles touching $C_0$ is at most $\frac 14 \kappa\ab A \ab B \ab C $, hence there are at least $\frac 34 \kappa \ab A \ab B \ab C$ triangles touching a vertex in $C \setminus C_0$. Each such vertex lies in at most $e(A,B)$ triangles, hence
	\[
		\ab{C \setminus C_0}\geq \frac{\frac 34 \kappa \ab A \ab B \ab C}{e(A,B)} \geq \frac 34 \cdot \frac{e(A,B)\cdot \tau \ab C}{e(A,B)} = 
		\frac {3\tau} 4 \ab C.
	\]
	Let $\mu=\sqrt {p\tau}/4$, and note that $\mu \leq \sqrt \kappa/4$ since $\kappa \geq p\tau$. Let us call a vertex $c \in C$ \emph{$A$-sparse} if $c \notin C_0$, and additionally $\ab{N(c)\cap A} \leq \mu \ab A$. Let $C_A$ denote the set of $A$-sparse vertices in $C$, and suppose that $\ab{C_A}\geq m$. Then consider the induced subgraph $G_A$ on vertex set $A \cup B \cup C_A$. Since no vertex of $C_A$ is in $C_0$, we see that $G_A$ contains at least $( \frac 1 4 \kappa \ab A \ab B ) \cdot \ab{C_A}$ triangles, and hence $\kappa_3(G_A)\geq \kappa/4>\mu^2$. Moreover, as every vertex in $C_A$ has degree at most $\mu\ab A$ to $A$, we see that $d(A,C_A)\leq \mu$, hence $\kappa_3(G_A)/d(A,C_A)>\mu^2/\mu=\mu$.
	Finally, we have $\min \{\ab A, \ab B, \ab{C_A}\}\geq m$, hence $G_A$ is an $(m,\mu)$-switcher on parts $X=A, Y=C_A, Z=B$. This contradicts our assumption that no subgraph of $G$ is an $(m,\mu)$-switcher, so we conclude that $\ab{C_A}<m$. By the same logic, we have $\ab{C_B}<m$, where $C_B$ is defined analogously.

	We conclude that $\ab{C \setminus(C_0 \cup C_A \cup C_B)}\geq (3\tau/4) \ab C - 2m \geq \tau \ab C/4$. Fix any vertex $c^* \in C \setminus (C_0 \cup C_A \cup C_B)$, and let $A^\oplus=N(c^*)\cap A, B^\oplus=N(c^*)\cap B$. By the choice of $c^*$, we know that $e(A^\oplus,B^\oplus)\geq \frac 14 \kappa \ab A \ab B$, and that $\ab{A^\oplus}\geq \mu \ab A, \ab{B^\oplus}\geq \mu \ab B$. 

	Note that if $p \leq 16\tau$, then also $p \leq 16(\kappa/p)$, since $\kappa \geq p\tau$. Therefore,
    \[
    \left(\frac \kappa p\right)^2 \geq \frac \kappa p \cdot \frac p{16} = \frac \kappa{16}\geq \mu^2
    \]
    implying that $\kappa_3(G)/d(A,B)=\kappa/p\geq \mu$.
	Since $\min \{\ab A, \ab B , \ab C\}\geq m$, we conclude that $G$ itself is an $(m,\mu)$-switcher, a contradiction. Hence we have $p> 16\tau$.

	Now let $\eta = \sqrt{\tau/p}$, and note by the above that $\eta \leq \sqrt{\tau/(16\tau)}=\frac 14$. Let $\alpha=({15\eta p \log \tfrac 1 \eta})/ ({\tau \largereal})$. Note that since $\largereal \geq 50/\tau$ and $\eta \leq \frac 14$, we have that $\alpha \leq p/2$. 
    We also note for future reference that $\eta \ab A \geq 4\eta/\tau>4/\sqrt \tau >5$, using that $1 \leq m \leq \frac \tau 4 \ab A$. This implies that $\flo{\eta \ab A}\geq \frac{4}{5}\eta \ab A$, and similarly $\flo{\eta\ab B}\geq \frac{4}{5}\eta \ab B$.

    Our key claim is that
	\begin{equation}\label{eq:lose alpha}
		d(A^\oplus,B^\oplus)\geq p-\alpha.
	\end{equation}
	Suppose for contradiction that $d(A^\oplus,B^\oplus)<p-\alpha<p$. In particular, we find that
	\[
		p\ab{A^\oplus}\ab{B^\oplus}>e(A^\oplus,B^\oplus) \geq \frac 14 \kappa \ab A \ab B \geq \frac 14 (p\tau)\ab A \ab B,
	\]
	since $\kappa \geq \tau_C(G)p\geq \tau p$. Rearranging, we conclude that $\ab{A^\oplus}\ab{B^\oplus} \geq \frac \tau 4 \ab A \ab B$.

	Let $A^+$ be a uniformly random subset of $A^\oplus$ of size $\min\{\ab{A^\oplus},\flo{\eta\ab A}\}$, and similarly for $B^+$. 
    By averaging, we may find a specific choice of $A^+,B^+$ such that $d(A^+,B^+) < p-\alpha$; let us fix such a choice. The key point is that in doing this, we have lost essentially nothing on the lower bound on the product of the sizes of $A^+$ and $B^+$, since
	\[
    \ab{A^+}\ab{B^+} = \min \{\ab{A^\oplus}\ab{B^\oplus}, \flo{\eta \ab A} \ab{B^\oplus}, \flo{\eta\ab B}\ab{A^\oplus}, \flo{\eta \ab A}\flo{\eta \ab B}\}. 
	\]
	The first term is at least $\frac \tau 4 \ab A \ab B$. 
    The next two terms are each at least $\frac{4}{5}\eta \mu \ab A \ab B = \frac \tau 5 \ab A \ab B$, by our choices of $\eta=\sqrt{\tau/p}$ and $\mu=\sqrt{p\tau}/4$. Finally, the last term is at least $(\frac{4}{5})^2\eta^2 \ab A \ab B > \frac 15{(\tau/p)\ab A \ab B} \geq \frac \tau5 \ab A \ab B$. So in total, we conclude that $\ab{A^+}\ab{B^+}\geq \frac \tau 5 \ab A \ab B$. 

	Let $A^- = A \setminus A^+, B^- = B \setminus B^+$. Since $\ab{A^+} \leq \eta \ab A, \ab{B^+}\leq \eta \ab B$, we see that for every $(X,Y) \in \{(A^-,B^-),(A^+,B^-),(A^-,B^+)\}$, we have that $(\ab X \ab Y)/(\ab A \ab B) \in [0,\eta] \cup [(1-\eta)^2,1]$. Moreover, by our choice of $\largereal$ and $\eta$, we have $\largereal \geq \frac{50}\tau \geq \log \frac 1 \tau \geq \log \frac 1 \eta$. Therefore, since $(A,B)$ is strictly $\largereal$-balanced, we may apply \cref{lemit:edge count strengthened} to conclude that for each such choice of $(X,Y)$, we have
	\[
		e(X,Y) \leq p \ab X \ab Y + (\eta \log \tfrac 1 \eta) \frac p \largereal \ab A \ab B \leq  p \ab X \ab Y + \frac {5\eta p \log \tfrac 1 \eta} {\tau \largereal} \ab{A^+}\ab{B^+}.
	\]
	Summing this inequality over all three choices of $(X,Y)$, as well as the inequality $e(A^+,B^+) < (p-\alpha)\ab{A^+}\ab{B^+}$, we conclude that
	\[
		e(A,B) < p \ab A \ab B + \left( \frac {15\eta p \log \tfrac 1 \eta} {\tau \largereal} -\alpha\right) \ab {A^+} \ab {B^+}.
	\]
	But this is impossible since, by our choice of $\alpha$, the second term above is $0$, and the first term equals $e(A,B)$. This contradiction concludes the proof of \eqref{eq:lose alpha}.

	It now only remains to estimate $\en_\largereal(A^\oplus, B^\oplus)$. For this, we recall that $e(A^\oplus, B^\oplus)\geq \frac 14 \kappa \ab A \ab B \geq \frac 14 p\tau \ab A \ab B$, and compute that
	\begin{align*}
		\en_\largereal(A^\oplus, B^\oplus) &= e(A^\oplus, B^\oplus) d(A^\oplus, B^\oplus)^{\largereal-1}\\
		&\geq \left( \frac 14 p\tau \ab A \ab B \right)(p-\alpha)^{\largereal-1}\\
		&=(\ab A \ab B p^\largereal) \left( \frac \tau4 p^{1-\largereal}(p-\alpha)^{\largereal-1} \right)\\
		&\geq \en_\largereal(A,B) \left( \tau^3 \left( 1- \frac \alpha p \right)^{\largereal-1} \right),
	\end{align*}
	using that $\tau \leq 1/2$ in the final inequality.
	Since $\alpha \leq p/2$, we may apply the inequality $1-\alpha/p \geq 2^{-2\alpha/p}$, which implies $(1-\alpha/p)^{\largereal-1} \geq 2^{-2\largereal \alpha/p}$. Now note that
	\[
		\frac{2\largereal \alpha}{p} = \frac{30 \eta \log \frac 1 \eta}{\tau} \leq 30 \frac{\sqrt{\tau/p}\cdot \log(1/\sqrt \tau)}{\tau} = \frac{15 \log \frac 1 \tau}{\sqrt{\tau p}} 
	\]
	where we use that $\eta = \sqrt{\tau/p}\geq \sqrt{\tau}$ in the first inequality. We may thus conclude that
	\[
		\tau^3 \left( 1- \frac \alpha p \right)^{\largereal-1} \geq \tau^3 \cdot 2^{-15\log(1/\tau)/\sqrt {p\tau}} = \tau^3 \cdot \tau^{15/\sqrt {p\tau}} \geq \tau^{20/\sqrt {p\tau}},
	\]
	and hence that $\en_\largereal(A^\oplus, B^\oplus) \geq \tau^{20/\sqrt {p\tau}}\en_\largereal(A,B)$. Finally, we pass to a strictly $\largereal$-balanced subgraph $(A^*,B^*)$ satisfying $\en_\largereal(A^*,B^*)\geq \en_\largereal(A^\oplus, B^\oplus)$, as given by \cref{lem:pass to max}, which satisfies the desired properties since $c^*$ is adjacent to all vertices in $A^\oplus \cup B^\oplus \supseteq A^*\cup B^*$. 
\end{proof}

Our next result is obtained by iterating \cref{lem:one step switching}, and the proof is very similar to that of \cref{prop:basic iteration}.

\begin{proposition}\label{prop:switch iteration}
	Fix parameters $\tau_0,p_0 \in (0,\frac 12]$. Let $n \geq p_0^{-400/\tau_0}$ and let $G$ be a tripartite graph with parts $A,B,C$ satisfying $\min \{\ab A,\ab B, \ab C \}\geq n$. Suppose that $\tau_C(G)\geq \tau_0$ and $d(A,B)\geq p_0$, and that no subgraph of $G$ is a $(\flo{n^{1/3}}, \sqrt{p_0\tau_0}/8)$-switcher. 

	Then there exist subsets $A^\dagger \subseteq A, B^\dagger \subseteq B, C^\dagger \subseteq C$ satisfying
	\begin{enumerate}[label=(\alph*)]
		\item $\ab{C^\dagger}=\flo{({\sqrt{p_0\tau_0}\log n})/({320\log \frac 1{p_0} \log \frac 1{\tau_0}})}$,
		\item $\ab{A^\dagger}\geq \sqrt n$ and $\ab{B^\dagger}\geq \sqrt n$,
		\item $d(A^\dagger, B^\dagger)\geq p_0/2$, and
		\item Every vertex of $C^\dagger$ is adjacent to every vertex of $A^\dagger \cup B^\dagger$. 
	\end{enumerate}
\end{proposition}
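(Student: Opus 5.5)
We mirror the proof of \cref{prop:basic iteration}, replacing \cref{lem:one step} by \cref{lem:one step switching}. Set
\[
	\largereal=\frac{\log n}{4\log \frac1{p_0}},\qquad s=\bflo{\frac{\sqrt{p_0\tau_0}\log n}{320\log\frac1{p_0}\log\frac1{\tau_0}}},\qquad m=\flo{n^{1/3}}.
\]
We build nested induced subgraphs $G\up0\supseteq\dots\supseteq G\up s$ on $(A\up\ell,B\up\ell,C\up\ell)$ maintaining three properties. First, $\ab{C\up\ell}=\ab C-\ell$. Second, every vertex of $C\setminus C\up\ell$ is complete to $A\up\ell\cup B\up\ell$. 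Third, $(A\up\ell,B\up\ell)$ is strictly $\largereal$-balanced with
\[
	\en_\largereal(A\up\ell,B\up\ell)\geq \beta^{\ell}p_0^\largereal\ab A\ab B
\]
for a fixed per-step loss factor $\beta$, to be chosen below. We also maintain $d(A\up\ell,B\up\ell)\geq p_0/2$ as an inductive hypothesis. The start is given by \cref{lem:pass to max}, exactly as before.

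For the inductive step, we check the hypotheses of \cref{lem:one step switching} for $G\up\ell$ with $\tau=\tau_0/2$.
\begin{itemize}
\item As before, $\ell<s\leq \tau_0 n/2$ gives $\tau_{C\up\ell}(G\up\ell)\geq\tau_0/2$.
\item The condition $\largereal\geq 50/\tau=100/\tau_0$ follows from $n\geq p_0^{-400/\tau_0}$.
\item The switcher hypothesis is where we need care. Let $p=d(A\up\ell,B\up\ell)$. Since $p\geq p_0/2$, we have $\sqrt{p\tau}/4\geq\sqrt{(p_0/2)(\tau_0/2)}/4=\sqrt{p_0\tau_0}/8$. By monotonicity of switchers, an $(m,\sqrt{p\tau}/4)$-switcher inside $G\up\ell$ would be a $(\flo{n^{1/3}},\sqrt{p_0\tau_0}/8)$-switcher in $G$, which is excluded.
\item We need $m\leq\frac\tau4\min\{\ab{A\up\ell},\ab{B\up\ell},\ab{C\up\ell}\}$. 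This follows from $\ab{A\up\ell},\ab{B\up\ell}\geq\sqrt n$ (established as in the final step below, using the energy bound at stage $\ell$) together with $n^{1/3}\leq \frac{\tau_0}{8}\sqrt n$, which holds since $n$ is huge.
\end{itemize}

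The lemma then yields $c^*$ and $(A^*,B^*)$, and we set $C\up{\ell+1}=C\up\ell\setminus\{c^*\}$, $A\up{\ell+1}=A^*$, $B\up{\ell+1}=B^*$. The energy loss per step is $\tau^{20/\sqrt{p\tau}}$. Using $p\geq p_0/2$ and $\tau=\tau_0/2$, this is at least
\[
	\beta:=2^{-c\log(1/\tau_0)/\sqrt{p_0\tau_0}}
\]
for an absolute constant $c$; roughly $c\approx 40\cdot 2$ after tracking the halvings. The choice of $s$ is exactly what makes $\beta^s\geq 2^{-\largereal}\geq n^{-1/4}$, since $s\cdot c\log\frac1{\tau_0}/\sqrt{p_0\tau_0}\leq \log n/(4\log\frac1{p_0})$ when the constant $320$ absorbs $c$ and the factor $4$.

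The conclusion is identical to the end of \cref{prop:basic iteration}. From $d^\largereal\ab{A\up\ell}\ab{B\up\ell}\geq n^{-1/4}p_0^\largereal\ab A\ab B=n^{-1/2}\ab A\ab B$ and the trivial bounds $d\leq1$, $\ab{A\up\ell}\leq\ab A$, $\ab{B\up\ell}\leq\ab B$, we get $\ab{A\up\ell},\ab{B\up\ell}\geq\sqrt n$. We also get $d^\largereal\geq 2^{-\largereal}p_0^\largereal$, i.e.\ $d\geq p_0/2$. These hold for every $\ell\leq s$, which closes the induction, including the density hypothesis used above.

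The main obstacle is this circularity. The switcher and size hypotheses at step $\ell$ need $d(A\up\ell,B\up\ell)\geq p_0/2$ and $\ab{A\up\ell},\ab{B\up\ell}\geq\sqrt n$, which come from the accumulated energy bound. So the induction must carry these as consequences of the third property at every stage, not just at the end. Beyond that, the only real work is verifying that the constants $320$, $400$ and $1/8$ are large or small enough to survive the halvings from $\tau=\tau_0/2$ and $p\geq p_0/2$.
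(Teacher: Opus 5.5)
Your proposal is correct and is essentially the paper's proof: the same choices of $\largereal$, $s$ and $m$, and the same three invariants. It also resolves the circularity the same way, deducing $\ab{A\up\ell},\ab{B\up\ell}\geq\sqrt n$ and $d(A\up\ell,B\up\ell)\geq p_0/2$ from the energy bound at every stage $\ell\leq s$. The per-step factor matches too: the paper writes it as $\tau_0^{40/\sqrt{\kappa_0}}$ with $\kappa_0=p_0\tau_0/4$, which is exactly your $\beta$ with $c=80$ (hence $80\cdot 4=320$). The only check you leave implicit is $\ab{C\up\ell}\geq\ab C-s\geq\sqrt n$ in the hypothesis on $m$, and that is immediate.
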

As the proof of \cref{prop:switch iteration} is quite similar to that of \cref{prop:basic iteration}, we will be brief, mostly highlighting the places where the differences lie.
\begin{proof}[Proof of \cref{prop:switch iteration}]
	Let
	\[
		s = \bflo{\frac{\sqrt{p_0\tau_0}\log n}{320\log \frac 1{p_0} \log \frac 1{\tau_0}}}, \qquad \largereal = \frac{\log n}{4 \log \frac 1{p_0}}, \qquad m = \flo{n^{1/3}}, \qquad\kappa_0 = \frac{p_0\tau_0}4, \qquad \mu = \frac{\sqrt{\kappa_0}}{4}.
	\]
	We note that by our assumption $n \geq p_0^{-400/\tau_0}\geq 2^{400/\tau_0}$, we have that $\frac {\tau_0}8\sqrt n \geq m$. 
	We will define a nested sequence of induced subgraphs $G\supseteq G\up 0 \supseteq \dots \supseteq G\up s$ on vertex sets $(A\up \ell, B\up \ell, C\up \ell)_{\ell=0}^s$, satisfying that $\ab{C\up \ell}=\ab C-\ell$, that every vertex in $C \setminus C\up \ell$ is adjacent to all of $A\up \ell \cup B \up \ell$, and that $(A\up \ell, B\up \ell)$ is strictly $\largereal$-balanced and satisfies
	\[
		\en_\largereal(A\up \ell, B\up \ell) \geq \tau_0^{40\ell/\sqrt{\kappa_0}} p_0^\largereal \ab A \ab B.
	\]
	To begin the iteration, we let $C\up 0=C$, and let $(A\up 0, B\up 0)$ be a strictly $\largereal$-balanced subgraph of $(A,B)$, as given by \cref{lem:pass to max}, which trivially satisfies these properties. 

	Now suppose we have defined $G\up \ell$ for some $\ell\leq s$. We note several properties of this graph. First, as $\ab{C \setminus C\up \ell} = \ell \leq s \leq \tau_0 n/2$, we have that $\tau_{C\up \ell}(G\up \ell)\geq \tau_0/2$. Second, using the fact that $p_0^\largereal=n^{-1/4}$ and 
	\[
		\tau_0^{40\ell/\sqrt{\kappa_0}} \geq \tau_0^{40s/\sqrt{\kappa_0}}\geq 2^{-\largereal} \geq n^{-1/4},
	\]
	we find that
	\[
		d(A\up \ell,B\up \ell)^\largereal\ab{A\up \ell}\ab{B\up \ell} =\en_\largereal(A\up \ell, B\up \ell) \geq 2^{-\largereal}p_0^\largereal \ab A \ab B \geq n^{-1/2}\ab A \ab B.
	\]
	This implies immediately that $\ab{A\up \ell}\geq \sqrt n, \ab{B\up \ell}\geq \sqrt n$, as well as that $d(A\up \ell,B\up \ell)\geq p_0/2$. 

	Assuming now that $\ell<s$,
	we apply \cref{lem:one step switching} to $G\up \ell$, with parameters $\tau=\tau_0/2, p=d(A\up \ell, B\up \ell) \geq p_0/2$ and $\largereal,m$ defined as above. Our assumption $n \geq p_0^{-400/\tau_0}$ implies that $\largereal \geq 100/\tau_0 = 50/\tau$. We've also seen that $m \leq \frac {\tau_0} 8 \sqrt n\leq \frac \tau 4 \min \{\ab{A\up \ell},\ab{B\up \ell},\ab{C\up \ell}\}$, using that $\ab{C\up \ell} \geq\ab C-s \geq \sqrt n$. Finally, since $G\up \ell$ is a subgraph of $G$, we see that no subgraph of $G\up \ell$ is an $(m,\sqrt{p_0 \tau_0}/8)$-switcher, hence no subgraph of $G\up \ell$ is an $(m,\sqrt{p\tau}/4)$-switcher. We have thus verified all the conditions of \cref{lem:one step switching}, which produces for us a vertex $c^*$ and sets $A^* \subseteq A\up \ell, B^* \subseteq B\up \ell$ satisfying
    \[
    \en_\largereal(A^*,B^*)\geq \tau^{20/\sqrt{p\tau}} \en_\largereal(A\up \ell,B\up \ell)\geq \left(\frac{\tau_0} 2\right)^{20/\sqrt{p_0\tau_0/4}}\en_\largereal(A\up \ell,B\up\ell) \geq \tau_0^{40/\sqrt{\kappa_0}}\en_\largereal(A\up \ell,B\up \ell).
    \]
    We set $C\up {\ell+1}=C\up \ell \setminus \{c^*\}, A\up{\ell+1}=A^*, B\up {\ell+1}=B^*$, and see that all of our desired properties remain true for $G\up{\ell+1}$. Moreover, when we stop the iteration at step $s$, the computations above show that we may take $A^ \dagger=A\up s, B^\dagger=B\up s, C^\dagger=C\setminus C\up s$, which satisfy all the desired properties. 
\end{proof}

\section{Bounds in Nikiforov's theorem}\label{sec:main proof}
In this section, we use the results proved in the previous sections to prove \cref{thm:main}.
In order to apply \cref{prop:basic iteration,prop:switch iteration}, we need a result that converts an arbitrary graph with many copies of $K_{k+1}$ into a $k$-mixed graph in which we control both $\tau_C(G)$ and $d(\mathcal A)$. This is not too hard to do via standard tools from the theory of graph regularity, and we prove the following result in \cref{sec:appendix}.
\begin{lemma}\label{lem:regularize-cliques}
	For every integer $k \geq 2$ and every $\gamma\in (0,\frac 12]$, there exists some $\zeta \geq \gamma^{k^{50}\gamma^{-10}}>0$ and some $\tau \in [\gamma^{2/(k+1)},\frac 12]$ such that the following holds. Let $G$ be an $N$-vertex graph with at least $\gamma N^{k+1}$ copies of $K_{k+1}$.  Then there exist disjoint sets $A_1,\dots,A_k,C \subseteq V(G)$, each of size at least $\flo{\zeta N}$, as well as a $k$-mixed graph $\G$ on vertex set $(\A,C)$ such that
	\begin{enumerate}
		\item Every edge of $E_\G(C,A_1 \cup \dotsb \cup A_k)$ is an edge of $G$,
		\item Every edge of $E_\G(\A)$ is a copy of $K_k$ in $G$, and
		\item $\tau_C(\G)\geq \tau \geq \gamma^{2/(k+1)}$ and $d(\A)\tau\geq \gamma/4$.
	\end{enumerate}
\end{lemma}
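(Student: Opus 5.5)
We will follow the strategy from the overview: apply a cylinder regularity lemma, locate a $(k+1)$-tuple of parts carrying many cliques, and then clean it up. First we discard a negligible fraction of copies of $K_{k+1}$: by averaging over a random balanced partition of $V(G)$ into $k+1$ parts, we pass to a $(k+1)$-partite subgraph with at least $c_k\gamma N^{k+1}$ transversal copies. Next we apply the Duke--Lefmann--R\"odl cylinder regularity lemma with regularity parameter $\varepsilon=\mathrm{poly}_k(\gamma)$. This partitions the product $V_1\times\dots\times V_{k+1}$ into cylinders $W_1\times\dots\times W_{k+1}$, where all but an $\varepsilon$-fraction (by volume) are $\varepsilon$-regular in every pair. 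Every cylinder has parts of size at least $\varepsilon^{k^{O(1)}\varepsilon^{-2}}N$, which accounts for the bound $\zeta\geq\gamma^{k^{50}\gamma^{-10}}$. Averaging shows that some cylinder is $\varepsilon$-regular in all pairs and has clique density at least $\gamma/2$ (in units of $\prod\ab{W_i}$), once we note that irregular cylinders contribute only $\varepsilon$ to the total.

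Inside this regular cylinder, the counting lemma gives a clique density equal to $\prod_{i<j}d_{ij}\pm k^2\varepsilon$. This product is therefore at least $\gamma/3$, so every $d_{ij}\geq\gamma/3$, and hence $\varepsilon\ll$ all densities. We now choose which part plays the role of $C$. Label the $k+1$ parts so that the product $\tau':=\prod_{i}d_{i,C}$ of densities between $C$ and the other parts is as \emph{large} as possible. Each pair density appears in exactly two of the $k+1$ such "star" products, so the product of all the star products equals $(\prod_{i<j}d_{ij})^2\geq(\gamma/3)^2$. Consequently $\tau'\geq(\gamma/3)^{2/(k+1)}$. Meanwhile $q:=\prod_{i<j\leq k}d_{ij}$ satisfies $q\tau'\approx$ the clique density $\geq\gamma/3$. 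Note that $\tau'\leq$ each $d_{i,C}$, and some slack is still needed to obtain $\tau\leq\frac12$ and $\tau\geq\gamma^{2/(k+1)}$ exactly. We will simply set $\tau=\min\{\frac12,\tau'/2\}$ or similar, absorbing the constants into the slack between $(\gamma/3)^{2/(k+1)}/2$ and $\gamma^{2/(k+1)}$. This step is delicate for small $k$ and $\gamma$ near $\frac12$, and may require running the argument with $\gamma$ slightly inflated in the counting, or exploiting that $\tau$ need only be some number in the stated range.

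The $k$-mixed graph $\G$ is then defined as follows. Its $C$-edges are the edges of $G$ between $C$ and $A_1\cup\dots\cup A_k$. Its $\A$-edges are those $k$-cliques $(a_1,\dots,a_k)$ whose common neighbourhood in $C$ has size at least $\tau\ab C$. Properties (1) and (2) hold by construction. Property (3), namely $\tau_C(\G)\geq\tau$, holds by definition of the edge set, so the remaining task is to show $d(\A)\geq\gamma/(4\tau)$. This is the main obstacle. It requires showing that most $k$-cliques in $\A$ have about $\tau'\ab C$ common neighbours in $C$, i.e.\ that few cliques are "deficient". We plan to use the standard consequence of regularity that, for all but an $\varepsilon'$-fraction of $k$-tuples of vertices, the joint neighbourhood in $C$ has size $(\prod_i d_{i,C}\pm\varepsilon')\ab C$; this follows by iterating the degree property of regular pairs. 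Combined with the counting lemma lower bound $e(\A)\geq(q-\varepsilon')\prod\ab{A_i}$, removing the deficient tuples leaves $d(\A)\geq q-2\varepsilon'$. Then $d(\A)\tau\geq(q-2\varepsilon')\tau'/2\geq\gamma/4$, provided $\varepsilon'$ is chosen small compared with $\gamma$; this fixes $\varepsilon=\gamma^{O(k)}$.

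Finally, we trim all parts to a common size $\flo{\zeta N}$ if necessary. Since the statement only requires each part to have size at least $\flo{\zeta N}$, no trimming is actually needed, and we simply take $A_i,C$ to be the chosen cylinder parts.
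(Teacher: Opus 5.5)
There is a genuine gap. As written, your bookkeeping does not reach the constants in the statement, and the "slack" you appeal to points the wrong way.

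From $\prod_{i<j}d_{ij}\geq\gamma/3$ you get $\tau'\geq(\gamma/3)^{2/(k+1)}$. So $\tau=\tau'/2\geq\frac12\,3^{-2/(k+1)}\gamma^{2/(k+1)}$, which is strictly below $\gamma^{2/(k+1)}$, and taking $\tau$ closer to $\tau'$ does not help. Likewise your final chain gives only $d(\A)\tau\gtrsim q\tau'/2\geq\gamma/6$, not $\gamma/4$.

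The missing idea is a factor you discarded in the first step. A fixed copy of $K_{k+1}$ is transversal with probability $(k+1)!/(k+1)^{k+1}$, while $\prod_i\ab{V_i}\leq(N/(k+1))^{k+1}$. Hence the transversal clique density relative to $\prod_i\ab{V_i}$ is at least $\gamma(k+1)!$, not $\gamma/2$. Note also that $\gamma\leq 1/(k+1)!$ always, so the worry about $\gamma$ near $\frac12$ is vacuous.

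Keeping this factor and taking $\varepsilon\leq\gamma/\binom{k+2}{2}$, the chosen regular cylinder has $\prod_{i<j}d_{ij}\geq\gamma((k+1)!-1)\geq 5\gamma$. Hence $\tau'\geq(\gamma((k+1)!-1))^{2/(k+1)}\geq\frac52\gamma^{2/(k+1)}$ and $q\geq5\gamma$. Then $\tau=\tau'/2$ lies in $[\frac54\gamma^{2/(k+1)},\frac12]$. If at least $\frac q2\prod_i\ab{A_i}$ of the $k$-cliques survive, you get $d(\A)\tau\geq q\tau'/4\geq\frac54\gamma$. This is exactly the role of the factor $(k+1)!-1$ in the paper's proof.

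With that repaired, your route is valid and genuinely different from the paper's.
\begin{itemize}
\item The paper uses the counting lemma to count $K_k$, $K_{k+1}$, and homomorphic copies of $K_{k+2}^-$ (with the two non-adjacent vertices in $C$). This gives $\E[\Z]^2/\E[\Z^2]\geq 16/25$ for the number $\Z$ of extensions of a random $k$-clique. It then applies Paley--Zygmund, keeping a $\frac4{25}$-fraction of the cliques with $\tau=\frac25\prod_i d(A_i,C)$.
\item Your argument iterates the degree property of the regular pairs $(A_i,C)$. It shows that all but a $k\varepsilon$-fraction of $k$-tuples have at least $\prod_i(d_{i,C}-\varepsilon)\ab C$ common neighbours in $C$, which is valid once $k\varepsilon\ll\tau'$.
\item Your version is more elementary and keeps almost all cliques, at the cost of nothing.
\end{itemize}

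One smaller quantitative slip. The Duke--Lefmann--R\"odl bound is $\beta\geq\varepsilon^{r^2\varepsilon^{-5}}$, not $\varepsilon^{-2}$. So your stated choice $\varepsilon=\gamma^{O(k)}$ would violate $\zeta\geq\gamma^{k^{50}\gamma^{-10}}$ for large $k$. However, your argument only needs $k\varepsilon\ll\tau'$ and $\binom{k+1}2\varepsilon\ll q$. So $\varepsilon=\gamma/(10k^2)$ suffices and comfortably meets the required bound on $\zeta$.
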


We begin by proving \cref{thm:main} in the case of triangles.
\begin{theorem}\label{thm:main triangle}
	$c_{K_3}(\gamma)=\Omega(\sqrt \gamma/(\log \frac 1 \gamma)^2)$. 
\end{theorem}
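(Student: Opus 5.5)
Given $G$ with $N$ vertices and at least $\gamma N^3$ triangles, we first apply \cref{lem:regularize-cliques} with $k=2$. This gives disjoint sets $A,B,C$, each of size at least $n\coloneqq\flo{\zeta N}$, together with a tripartite subgraph $G'\subseteq G$ in which every $A$--$B$ edge is an edge of $G$ (a copy of $K_2$), $\tau_C(G')\geq\tau_0\coloneqq\tau\geq\gamma^{2/3}$, and $p_0\coloneqq d(A,B)\geq \gamma/(4\tau_0)\geq\gamma/4$. Every triangle of $G'$ is a triangle of $G$. Because $\zeta$ depends only on $\gamma$, we have $\log n=(1-o(1))\log N$ as $N\to\infty$, so it suffices to find $K_3[t]$ in $G'$ with $t\gtrsim \sqrt\gamma\log n/(\log\frac1\gamma)^2$. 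We may also take $p_0\le\frac12$: if not, just set $p_0=\frac12$ in the hypotheses.

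The argument splits on whether $G'$ contains a switcher. Let $m=\flo{n^{1/3}}$ and $\mu=\sqrt{p_0\tau_0}/8$, and note $\mu\geq\sqrt{\gamma}/16$.

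\emph{Case 1: no subgraph of $G'$ is an $(m,\mu)$-switcher.} We apply \cref{prop:switch iteration}. Its size condition $n\geq p_0^{-400/\tau_0}$ holds for $N$ large in terms of $\gamma$. It yields $C^\dagger$ with $\ab{C^\dagger}\gtrsim \sqrt{p_0\tau_0}\log n/(\log\frac1{p_0}\log\frac1{\tau_0})=\Omega(\sqrt\gamma\log n/(\log\frac1\gamma)^2)$, using $p_0\tau_0\geq\gamma/4$ and $p_0,\tau_0\geq\gamma/4$. It also yields $A^\dagger,B^\dagger$ of size at least $\sqrt n$ with $d(A^\dagger,B^\dagger)\geq p_0/2\geq\gamma/8$, completely joined to $C^\dagger$. \cref{thm:kst} then gives $K_{t,t}$ in $(A^\dagger,B^\dagger)$ with $t=\Omega(\log n/\log\frac1\gamma)$. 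Together with $C^\dagger$ this is a $K_{s,t,t}$ with $s\le t$, and hence a $K_3[s]$ with $s=\Omega(\sqrt\gamma\log n/(\log\frac1\gamma)^2)$.

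\emph{Case 2: some subgraph $H\subseteq G'$ with parts $X,Y,Z$ is an $(m,\mu)$-switcher.} Then $\kappa_3(H)\geq\mu^2$ and $\kappa_3(H)/d(X,Y)\geq\mu$. We apply \cref{cor:good for switcher} to $H$ with $\kappa=\mu^2$, $\nu=\mu$, and the part size $m$. This requires $m\geq\mu^{-800/\mu}$, which holds for $n$ sufficiently large in terms of $\gamma$; note that $\log m\approx\frac13\log n$. The corollary produces $K_{s,t,t}$ with $s=\Omega(\mu\log m/\log\frac1\mu)=\Omega(\sqrt\gamma\log n/\log\frac1\gamma)$ and $t=\Omega(\log n/\log\frac1\gamma)\ge s$, which is again a $K_3[s]$ of the required size, in fact with a better bound.

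The main obstacle is bookkeeping rather than any new idea. One must check that the threshold ``$N$ sufficiently large'' can be chosen depending only on $\gamma$. This means $\zeta N\ge p_0^{-400/\tau_0}$ and $\flo{(\zeta N)^{1/3}}\geq\mu^{-800/\mu}$, both of which are $\exp(\mathrm{poly}(1/\gamma))$-type thresholds. One must also check the parameter ranges $\tau_0,p_0,\kappa,\nu\in(0,\frac12]$ and that $s\le t$ in each case. The constants are absolute, so the conclusion is $c_{K_3}(\gamma)=\Omega(\sqrt\gamma/(\log\frac1\gamma)^2)$.
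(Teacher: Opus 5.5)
Your proposal is correct and follows the paper's proof essentially step for step: apply \cref{lem:regularize-cliques} with $k=2$, split on whether $G$ contains an $(\flo{n^{1/3}},\mu)$-switcher, and finish with \cref{prop:switch iteration} plus \cref{thm:kst} in one case and \cref{cor:good for switcher} (with $\kappa=\mu^2$, $\nu=\mu$) in the other. The only difference is cosmetic: you take $p_0=\min\{d(A,B),\frac12\}$, so your switcher threshold $\sqrt{p_0\tau_0}/8\geq\sqrt\gamma/16$ depends on $G$, whereas the paper takes $p_0=\gamma/(4\tau_0)$ so that the threshold is exactly $\sqrt\gamma/16$; also, in Case~1 your claim $s\le t$ can fail if you apply \cref{thm:kst} at density $\gamma/8$ rather than $p_0/2$, so it is cleaner to take $K_3[\min\{s,t\}]$, which has the required order either way.
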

\begin{proof}
    We may assume that $\gamma \leq \frac 12$, by choosing the implicit constant large enough. 
	Let $N \geq \gamma^{-2^{60}\gamma^{-10}}$ be sufficiently large, and let $G_0$ be an $N$-vertex graph with at least $\gamma N^3$ triangles. Applying \cref{lem:regularize-cliques} with $k=2$, we obtain a tripartite subgraph $G$ on parts $A,B,C \subseteq V(G)$, satisfying $n=\min\{\ab A, \ab B, \ab C\}\geq \flo{\zeta N}\geq \sqrt N$, as well as some $\tau \in [\gamma^{2/3},\frac 12]$ such that $\tau_C(G)\geq \tau \geq \gamma^{2/3}$ and $d_{G}(A,B)\tau\geq \gamma/4$. This in particular implies $d_G(A,B)\geq \gamma/4$. Note that $n \geq \sqrt N \geq (\gamma/4)^{-400/\gamma^{2/3}}$. 

	We split into two cases. First, suppose that no subgraph of $G$ is a $(\flo{n^{1/3}}, \sqrt\gamma/16)$-switcher. Then we are precisely in the setting of \cref{prop:switch iteration} (with parameters $\tau_0=\tau$ and $p_0= \gamma/(4\tau_0)$), hence we may find sets $A^\dagger, B^\dagger, C^\dagger$ as in the statement of \cref{prop:switch iteration}. We have that
	\[
		s=\ab{C^\dagger} \geq \bflo{\frac{\sqrt{\gamma/4}\log n}{320 \log \frac 1 {\gamma^{2/3}} \log \frac 4{\gamma}}} \geq \bflo{\frac{\sqrt{\gamma}\log n}{2000(\log \frac 1 \gamma)^2}} \geq \bflo{\frac{\sqrt{\gamma}\log N}{4000(\log \frac 1 \gamma)^2}},
	\]
	as well as $\ab{A^\dagger},\ab{B^\dagger}\geq \sqrt n \geq N^{1/4}$, and all vertices of $C^\dagger$ are adjacent to all vertices of $A^\dagger \cup B^\dagger$. We now apply \cref{thm:kst} to the bipartite graph between $A^\dagger$ and $B^\dagger$, which has edge density at least $d(A,B)/2\geq \gamma/8$. We conclude that $(A^\dagger,B^\dagger)$ contains a copy of $K_{t,t}$ for 
	\[
		t = \bflo{\frac{\log N^{1/4}}{2\log \frac 8 \gamma}}\geq \bflo{\frac{\log N}{32 \log \frac 1 \gamma}}\geq s.
	\]
	Together with $C^\dagger$, we find a copy of $K_{s,t,t}\supseteq K_{s,s,s}$ in $G$, completing the proof in this case.

	On the other hand, if $G$ contains a $(\flo{n^{1/3}},\sqrt \gamma/16)$-switcher, we apply \cref{cor:good for switcher} to this switcher, with parameters $m=\flo{n^{1/3}}\geq N^{1/7},\kappa = \gamma/256$, and $\nu = \sqrt \gamma/16$, noting that $m \geq \kappa^{-400/\nu}$. We conclude that $G$ contains a complete tripartite graph $K_{s',t',t'}$, where
	\[
		s' = \bflo{\frac{\nu \log m}{512 \log \frac{256}{\gamma}}} \geq \bflo{\frac{\sqrt \gamma \log N}{10^7 \log \frac 1 \gamma}}, \qquad t' = \bflo{\frac{\log m}{12\log \frac {256} \gamma}} \geq s',
	\]
	and we again obtain the desired result. 
\end{proof}
Next, we use \cref{prop:basic iteration} to provide a recursive bound on $c_{K_{k+1}}(\gamma)$ in terms of $c_{K_k}(\gamma)$. 
\begin{theorem}\label{thm:Kk+1 reduction}
	For every $k \geq 3$, we have
	\[
		c_{K_{k+1}}(\gamma) \geq \min \left\{\Omega_k\left( \frac{\gamma^{2/(k+1)}}{\log \frac 1 \gamma} \right), \frac 18 c_{K_k}\left( \frac\gamma{8k^k} \right)\right\}.
	\]
\end{theorem}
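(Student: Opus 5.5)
The plan is to follow the sketch in \cref{sec:sketch}. Let $G_0$ be an $N$-vertex graph with at least $\gamma N^{k+1}$ copies of $K_{k+1}$, with $N$ large in terms of $\gamma$ and $k$.

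First I apply \cref{lem:regularize-cliques}. This produces disjoint sets $A_1,\dots,A_k,C$, each of size at least $n \coloneqq \flo{\zeta N}\geq \sqrt N$, and a $k$-mixed graph $\G$ on them whose hyperedges are copies of $K_k$ in $G_0$ and whose $C$-edges are edges of $G_0$. It satisfies $\tau_C(\G)\geq\tau\geq\gamma^{2/(k+1)}$ and $d(\A)\geq \gamma/(4\tau)\geq \gamma/4$. I may shrink all parts to exactly $n$ vertices by random subsets. This preserves $d(\A)$ on average, but it does not obviously preserve the minimum extension density. So I would rather keep the sizes as they are and work with $n$ as a lower bound only.

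Next I apply \cref{prop:basic iteration} with $\tau_0=\tau$ and $p=\gamma/4$. The hypothesis $n\geq p^{-10^k/\tau_0}$ holds for $N$ large, since $\tau_0\geq \gamma^{2/(k+1)}$. This gives $C^\dagger$ with
\[
\ab{C^\dagger}=\bflo{\frac{\tau\log n}{2^{k+5}\log\frac 4\gamma}}=\Omega_k\!\left(\frac{\gamma^{2/(k+1)}\log N}{\log\frac1\gamma}\right),
\]
together with sets $A_i^\dagger$ of size at least $\sqrt n$, with $d(\A^\dagger)\geq \gamma/8$, and with $C^\dagger$ complete to $\bigcup_i A_i^\dagger$.

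Now I pass to the $K_k$ count. I trim each $A_i^\dagger$ to a subset of a common size $M=\flo{\sqrt n}$ by choosing random subsets. Averaging gives a choice whose $k$-partite density of $\G$-hyperedges is still at least $\gamma/8$. Let $G'$ be the induced subgraph of $G_0$ on $A_1^\dagger\cup\dots\cup A_k^\dagger$, so $N'=kM$. Every hyperedge is a distinct copy of $K_k$ in $G'$, which gives at least $(\gamma/8)M^k=\frac{\gamma}{8k^k}(N')^k$ copies. By the definition of $c_{K_k}$, for $N$ large $G'$ contains $K_k[t]$ with $t\geq (c_{K_k}(\gamma/(8k^k))-o(1))\log N'$. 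Since $N'\geq N^{1/4}/2$ roughly, using $n\geq\sqrt N$ and $M\approx\sqrt n$, this yields $t\geq (\frac18 c_{K_k}(\gamma/8k^k)-o(1))\log N$, where the $\frac18$ absorbs the loss in the exponent. The $K_k[t]$ lies in $\bigcup A_i^\dagger$, and every vertex of $C^\dagger$ is adjacent to all of it, while $C^\dagger$ is disjoint from the $A_i$. Taking $\min\{t,\ab{C^\dagger}\}$ vertices from each side gives $K_{k+1}[\min\{t,\ab{C^\dagger}\}]$, which is the claimed bound.

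The main obstacle is bookkeeping in the passage to $G'$. Copies of $K_k$ in $G'$ need not be transversal to the parts, and the blowup found there might have parts that are not independent sets in $G_0$. The latter is not an issue, because blowups are only required as (not necessarily induced) subgraphs. Counting only transversal hyperedges gives a lower bound, so that is also fine. The remaining care is the exponent: I must check that $\log N'\geq \frac14\log N - O(1)$ uses only $\zeta N\geq \sqrt N$. That inequality holds for $N\geq\gamma^{-k^{50}\gamma^{-10}\cdot 2}$, and the $o(1)$ terms vanish as $N\to\infty$ with $\gamma,k$ fixed.
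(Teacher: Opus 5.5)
Your proposal is correct and follows the paper's proof essentially step for step. You apply \cref{lem:regularize-cliques}, then run \cref{prop:basic iteration} to obtain $C^\dagger$ of size $\Omega_k(\gamma^{2/(k+1)}\log N/\log\frac1\gamma)$, complete to sets $A_i^\dagger$ of size at least $\sqrt n$ whose hyperedge density is at least $\gamma/8$. You then randomly trim the $A_i^\dagger$ to a common size, giving a graph with at least $\frac{\gamma}{8k^k}(N')^k$ copies of $K_k$ on $N'\gtrsim N^{1/4}$ vertices, and invoke the definition of $c_{K_k}$. The differences from the paper are cosmetic: you take $p=\gamma/4$ and $\tau_0=\tau$ instead of $p=\gamma/(4\tau_C(\G))$ and $\tau_0=\gamma^{2/(k+1)}$, and you trim to $\flo{\sqrt n}$ instead of $\flo{N^{1/4}}$.
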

\begin{proof}
    As before, we may assume that $\gamma \leq 2^{-k}$, by picking the implicit constant sufficiently large.
	Let $\gamma'=\gamma/(8k^k)$, and let $m_0$ be chosen sufficiently large so that for all $m \geq m_0$, every $m$-vertex graph with $\gamma'm^k$ copies of $K_k$ contains a copy of $K_k[t]$, where $t\geq \frac 12 c_{K_k}(\gamma') \log m$. 

	Let now $N$ be sufficiently large, and in particular at least $m_0^4$. Let $G$ be an $N$-vertex graph with at least $\gamma N^{k+1}$ copies of $K_{k+1}$. We apply \cref{lem:regularize-cliques} to find disjoint sets $A_1,\dots,A_k,C\subseteq V(G)$, as well as a $k$-mixed graph $\G$ on this vertex set. Since $N$ is sufficiently large, we may assume $n = \min\{\ab{A_1},\dots,\ab{A_k},\ab C\}\geq \sqrt N$. We now apply \cref{prop:basic iteration} to $\G$, with parameters $\tau_0=\gamma^{2/(k+1)}$ and $p = \gamma/(4\tau_C(\G))\geq \gamma/4$, to obtain sets $A_i^\dagger, C^\dagger$. We have that 
	\[
		\ab{C^\dagger} = \bflo{\frac{\tau_0\log n}{2^{k+5}\log \frac 1 p}} \geq \bflo{\frac{\gamma^{2/(k+1)}\log N}{2^{k+8}\log \frac 1 \gamma}}.
	\]
	Now, let $A_i' \subseteq A_i^\dagger$ be a randomly chosen set of size exactly $\flo{N^{1/4}}$, which exists since $\ab{A_i^\dagger}\geq \sqrt n \geq \flo{N^{1/4}}$. In expectation, the number of $k$-uniform edges of $\G$ in $A_1'\cup \dots \cup A_k'$ is at least $(p/2)\flo{N^{1/4}}^k$, hence we may fix an outcome with at least that many edges. This yields for us an induced subgraph of $G[A_1^\dagger \cup \dots \cup A_k^\dagger]$ of order $m=k\flo{N^{1/4}}$ with at least $(\gamma/8)\flo{N^{1/4}}^k = \gamma' m^k$ copies of $K_k$. By the definition of $m_0$, we can find in this subgraph a copy of $K_k[t]$, for $t = \frac 12 c_{K_k}(\gamma') \log m \geq \frac 18 c_{K_k}(\gamma')\log N$, and all of its vertices are adjacent in $G$ to $C^\dagger$. Hence, together with $C^\dagger$, we obtain a blowup of $K_{k+1}$ of order $\min \{t, \ab{C^\dagger}\}$, which yields the desired result. 
\end{proof}
We note that this proof actually shows that the ``sufficiently large'' condition on the number of vertices for $K_{k+1}$ is polynomial in that for $K_k$. In particular, since \cref{thm:main triangle} already applies for $N \geq \gamma^{-O(\gamma^{-10})}$, we conclude that similarly \cref{thm:Kk+1 reduction} also holds for $N \geq \gamma^{-O_k(\gamma^{-10})}$. 

Finally, we use the following simple and well-known argument, that shows that a bound on $c_{K_h}(\gamma)$ implies essentially the same bound on $c_H(\gamma)$ for any $h$-vertex graph $H$. 
\begin{proposition}\label{prop:monotonicity reduction}
	If $H$ has $h$ vertices, then $c_{H}(\gamma)\geq c_{K_h}(h^{-h}\gamma)$. 
\end{proposition}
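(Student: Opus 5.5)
The plan is the standard random-partition reduction from $H$ to $K_h$. Let $G$ be an $N$-vertex graph with at least $\gamma N^h$ copies of $H$. Label $V(H)=\{v_1,\dots,v_h\}$ and assign each vertex of $G$ independently and uniformly a label in $[h]$, giving a partition $V(G)=V_1\cup\dots\cup V_h$. A fixed copy of $H$, viewed as an injection $\phi\colon V(H)\to V(G)$, is \emph{well-placed} if $\phi(v_i)\in V_i$ for all $i$. This happens with probability $h^{-h}$. So in expectation at least $h^{-h}\gamma N^h$ copies are well-placed, and we fix such a partition.

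Next, build an auxiliary graph $G'$ on $V(G)$. For each edge $v_iv_j\in E(H)$, include all edges of $G$ between $V_i$ and $V_j$. Include no other edges; in particular there are no edges inside any $V_i$ and no edges between $V_i,V_j$ when $v_iv_j\notin E(H)$. Then add all pairs between $V_i$ and $V_j$ whenever $v_iv_j\notin E(H)$, $i\neq j$. Every well-placed copy of $H$ now spans a copy of $K_h$ in $G'$, with one vertex in each part. Distinct well-placed copies (as injections) give distinct $h$-tuples, so $G'$ contains at least $h^{-h}\gamma N^h$ copies of $K_h$. If copies are counted as unlabeled subgraphs, there is a bounded automorphism factor, and one should check that the counting convention makes the constant match exactly $h^{-h}\gamma$. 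I would phrase everything in terms of labeled copies, or note that $K_h$ has the most automorphisms, so the count only improves.

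Applying the definition of $c_{K_h}(h^{-h}\gamma)$, for any $c<c_{K_h}(h^{-h}\gamma)$ and $N$ large, $G'$ contains $K_h[t]$ with $t\geq (c-o(1))\log N$. The key step is that this blowup must be ``transversal.'' Since $G'$ is $h$-partite, each part of the blowup, being a clique-adjacent independent set structure, must occupy distinct classes $V_{\pi(i)}$. Two vertices from different blowup parts are adjacent, so they cannot lie in the same $V_j$. Hence each $V_j$ meets at most one blowup part, and by pigeonhole (there are $h$ parts and $h$ classes) each blowup part lies inside a single class, with the classes distinct.

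Relabel so that blowup part $U_j\subseteq V_j$. For every $v_iv_j\in E(H)$, all $U_i$–$U_j$ pairs are edges of $G'$ between $V_i$ and $V_j$, and those edges come from $G$. Hence $G$ contains $H[t]$ with parts $U_1,\dots,U_h$. The main subtlety is this transversality argument, together with the counting-convention check above; both are routine. Taking $c\to c_{K_h}(h^{-h}\gamma)$ gives $c_H(\gamma)\geq c_{K_h}(h^{-h}\gamma)$.
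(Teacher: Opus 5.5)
Your proposal is correct and follows the paper's proof essentially step for step. The shared steps are the random $h$-partition in which an $h^{-h}$ fraction of labeled copies become spanning, the auxiliary $h$-partite graph that keeps $G$'s edges on pairs $ij\in E(H)$ and is complete on non-edges, and the observation that any $K_h[t]$ in it is transversal. Your pigeonhole argument for transversality, and your remark that distinct well-placed injections give distinct transversal $K_h$'s (so the constant $h^{-h}\gamma$ survives under either counting convention), spell out details the paper states in a single line.
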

\begin{proof}
	Let $G$ be an $N$-vertex graph, and suppose that $G$ contains at least $\gamma N^h$ copies of $H$. In particular, if we identify the vertex set of $H$ with $[h]$, then $G$ contains at least $\gamma N^h$ labeled copies of $H$. We now pick a random partition of $V(G)$ into $h$ parts $V_1,\dotsc,V_h$. Every labeled copy of $H$ has a probability $h^{-h}$ of being \emph{spanning}, that is, having its $i$th vertex end up in $V_i$ for all $i \in [h]$. So by linearity of expectation, there exists some $h$-partition of $V$ so that at least $h^{-h} \gamma N^h$ labeled copies of $H$ are spanning.

	We now define a new $h$-partite graph $\wt G$ on the same vertex set as $G$. We obtain $\wt G$ from $G$ by first deleting every edge contained in some $V_i$. Next, for every pair $(i,j)$ such that $ij \notin E(H)$, we place a complete bipartite graph between $V_i$ and $V_j$. However, if $ij \in E(H)$, then we let $\wt G$ have the same edges between $V_i$ and $V_j$ as are found in $G$.

    Every spanning copy of $H$ in $G$ thus yields a copy of $K_h$ in $\wt G$, and conversely, every copy of $K_h$ in $\wt G$ arises from a spanning copy of $H$ in $G$.
    So by the definition of $c_{K_h}(h^{-h} \gamma)$, we see that $\wt G$ must contain $K_h[t]$ as a subgraph, where $t \geq (c_{K_h}(h^{-h} \gamma)-o(1))\log N$, and where the $o(1)$ term tends to $0$ as $N\to \infty$. As $\wt G$ is $h$-partite, each of the parts of this $K_h[t]$ must lie in a distinct part $V_i$ of $\wt G$. But if we only restrict our attention to pairs $(V_i,V_j)$ where $ij \in E(H)$, we see that this $K_h[t]$ gives rise to a copy of $H[t]$ inside $G$. By the definition of $c_{H}(\gamma)$, this completes the proof.
\end{proof}

With these tools, the proof of \cref{thm:main} is essentially immediate.
\begin{proof}[Proof of \cref{thm:main}]
    The case $h=1$ is trivial, and the case $h=2$ is covered by \cref{thm:kst}, hence we may assume that $h \geq 3$. As before, we may also assume that $\gamma \leq \frac 12$.
	We first claim that for all $h \geq 3$, we have $c_{K_h}(\gamma)=\Omega_h(\sqrt \gamma/(\log \frac 1 \gamma)^2)$. We prove this by induction on $h$, with the base case $h=3$ being precisely the statement of \cref{thm:main triangle}. Having proved the statement for $h=k$, we obtain it for $h=k+1\geq 4$ by \cref{thm:Kk+1 reduction}, since
	\[
		c_{K_{k+1}}(\gamma)\geq \min \left\{\Omega\left( \frac{\gamma^{2/(k+1)}}{\log \frac 1 \gamma} \right), \frac 18 c_{K_k}\left( \frac\gamma{8k^k} \right)\right\} = \min \left\{ \Omega \left( \frac{\gamma^{2/4}}{\log \frac 1 \gamma}\right), \Omega_k\left( \frac{\sqrt \gamma}{(\log \frac 1 \gamma)^2} \right)  \right\},
	\]
	and both terms are at least $\Omega_{k+1}(\sqrt \gamma/(\log \frac 1 \gamma)^2)$, completing the induction. Finally, for a general graph $H$ with $h$ vertices, we see by \cref{prop:monotonicity reduction} that
	\[
		c_H(\gamma)\geq c_{K_h}(h^{-h}\gamma) = \Omega_H \left( \frac{\sqrt \gamma}{(\log \frac 1 \gamma)^2} \right) .\qedhere
	\]
\end{proof}

\section{Concluding remarks}\label{sec:conclusion}
As discussed in the introduction, our iterative approach to Nikiforov's theorem is closely connected to techniques used in many other problems, and this suggests that these connections should be explored further. For example, the resemblance between our technique and the one used to bound multicolor Ramsey numbers \cite{MR5057671} suggests that it may be useful to attack a multicolor Ramsey-theoretic variant of Nikiforov's theorem proposed in \cite[Conjecture 1.6]{MR5007509}. Moreover, in the setting of regularity lemmas, such iterative proofs can often be ``reverse engineered'' to show that the bound they give is optimal; this was first achieved in the breakthrough work of Gowers \cite{MR1445389}, and his technique has been adapted and extended to a wide variety of problems, e.g.\ \cite{MR5035093,MR5024850,MR4634684,MR4025519,MR3939561,MR3737374,MR3530502,MR3516883,MR3048552,MR2989432,MR2153903}. Although we have not succeeded in doing the analogous thing here, it is conceivable that by reverse-engineering our proof, one could show that, for example, $c_{K_3}(\gamma)\leq \gamma^{\Omega(1)}$, demonstrating that random graphs are very far from extremal in Nikiforov's theorem. 

Indeed, the most interesting question about this topic remains open, namely whether the true behavior of $c_H(\gamma)$ is polynomial or sub-polynomial. In case $H$ is triangle-free, the result of \cite{MR5007509} shows that the true value is $c_H(\gamma)=\Theta_H(1/{\log \frac 1 \gamma})$, but for any other $H$ there is a wide gap between the bounds
\[
	\gamma^{1/2+o(1)} \leq c_H(\gamma) \leq O_H\left( \frac{1}{\log \frac 1 \gamma} \right),
\]
where the lower bound is \cref{thm:main} and the upper bound comes from considering a random graph. Given the result of \cite{MR5007509}, it is natural to conjecture that the upper bound is closer to the truth, but we are not sure whether to believe this conjecture: as discussed above, it is natural to try to ``reverse-engineer'' our iterative proof to obtain a polynomial upper bound $c_H(\gamma)\leq \gamma^{\Omega(1)}$. While we have been unsuccessful in doing this, we believe this is a problem that deserves further attention.

Another promising direction is to take advantage of the apparent similarity between our technique and the book algorithm \cite{MR5067166}. Recall that the main difficulty we encounter is that when we select a vertex $c^* \in C$ and define $A^+=N(c^*)\cap A, B^+=N(c^*)\cap B$, the density $d(A^+,B^+)$ may be smaller than $d(A,B)$. This is the main effect hurting us in our iterative algorithm, and much of the argument is about trying to limit its harm. 

This is essentially the same issue one encounters when trying to prove upper bounds on diagonal Ramsey numbers with the book algorithm. Note that this problem can be described as a ``negative correlation'' phenomenon between the neighborhoods of different vertices. Indeed, the only way for $d(A^+,B^+)$ to be substantially smaller than $d(A,B)$ is if the neighborhoods of vertices in $C$ are biased away from the edges in $E(A,B)$. In \cite{MR5057671}, Balister et al.\ introduced a novel way of controlling the effect of such negative correlations, via their so-called \emph{geometric lemma}. Roughly speaking, they show that if there are a lot of these negative correlations, one is able to focus on a large subgraph where one actually has substantial \emph{positive} correlation. It would be very interesting if one could adapt their technique to our setting, and ideally to thus obtain a better bound on $c_H(\gamma)$. 

Finally, we note that, like most prior works on this subject \cite{MR2409174,MR2398823,MR2993136,MR4195582} (but notably not \cite{MR5007509}), \cref{thm:main} assumes that $N$ is sufficiently large with respect to $\gamma$. As discussed in detail in \cite[Section 1.2]{MR5007509}, it would also be of great interest to prove such results without this assumption, i.e.\ to allow $\gamma$ to tend to $0$ as a function of $N$. Our proof shows that \cref{thm:main} holds when $N \geq 2^{C_H \gamma^{-11}}$ for some constant $C_H$ depending on $H$, or equivalently that one can let $\gamma$ tend to $0$ as any function slower than $(\log N)^{-1/11}$ and still obtain meaningful results. This is unfortunately too slow for most applications, e.g.\ \cite[Problem 3]{MR2993136} or \cite[Conjecture 1]{MR2959395} (and in any case the bound of $\gamma^{1/2+o(1)}$ is too weak for these applications). We refer to \cite[Section 1.2]{MR5007509} for more information on this point.


\begin{thebibliography}{10}
\providecommand{\url}[1]{\texttt{#1}}
\providecommand{\urlprefix}{URL }
\providecommand{\eprint}[2][]{\url{#2}}

\bibitem{MR5057671}
P.~Balister, B.~Bollob\'as, M.~Campos, S.~Griffiths, E.~Hurley, R.~Morris, J.~Sahasrabudhe, and M.~Tiba, Upper bounds for multicolour {R}amsey numbers, \emph{J. Amer. Math. Soc.} \textbf{39} (2026), 765--780.

\bibitem{MR335347}
B.~Bollob\'as and P.~Erd\H{o}s, On the structure of edge graphs, \emph{Bull. London Math. Soc.} \textbf{5} (1973), 317--321.

\bibitem{MR485528}
B.~Bollob\'as, P.~Erd\H{o}s, and M.~Simonovits, On the structure of edge graphs. {II}, \emph{J. London Math. Soc. (2)} \textbf{12} (1975/76), 219--224.

\bibitem{MR5067166}
M.~Campos, S.~Griffiths, R.~Morris, and J.~Sahasrabudhe, An exponential improvement for diagonal {R}amsey, \emph{Ann. of Math. (2)} \textbf{203} (2026), 869--932.

\bibitem{MR2989432}
D.~Conlon and J.~Fox, Bounds for graph regularity and removal lemmas, \emph{Geom. Funct. Anal.} \textbf{22} (2012), 1191--1256.

\bibitem{MR2959395}
D.~Conlon, J.~Fox, and B.~Sudakov, {Erd\H{o}s}--{H}ajnal-type theorems in hypergraphs, \emph{J. Combin. Theory Ser. B} \textbf{102} (2012), 1142--1154.

\bibitem{MR1333857}
R.~A. Duke, H.~Lefmann, and V.~R\"{o}dl, A fast approximation algorithm for computing the frequencies of subgraphs in a given graph, \emph{SIAM J. Comput.} \textbf{24} (1995), 598--620.

\bibitem{MR928742}
P.~Erd\H{o}s, M.~Goldberg, J.~Pach, and J.~Spencer, Cutting a graph into two dissimilar halves, \emph{J. Graph Theory} \textbf{12} (1988), 121--131.

\bibitem{MR0018807}
P.~Erd\"{o}s and A.~H. Stone, On the structure of linear graphs, \emph{Bull. Amer. Math. Soc.} \textbf{52} (1946), 1087--1091.

\bibitem{MR3737374}
J.~Fox and L.~M. Lov\'asz, A tight lower bound for {S}zemer\'edi's regularity lemma, \emph{Combinatorica} \textbf{37} (2017), 911--951.

\bibitem{MR4195582}
J.~Fox, S.~Luo, and Y.~Wigderson, Extremal and {R}amsey results on graph blowups, \emph{J. Comb.} \textbf{12} (2021), 1--15.

\bibitem{MR4634684}
J.~Fox, H.~T. Pham, and Y.~Zhao, Tower-type bounds for {R}oth's theorem with popular differences, \emph{J. Eur. Math. Soc. (JEMS)} \textbf{25} (2023), 3795--3831.

\bibitem{MR5024850}
F.~Garbe and J.~Hladk\'y, A tower lower bound for the degree relaxation of the regularity lemma, \emph{Comb. Theory} \textbf{5} (2025), Paper No. 8, 17pp.

\bibitem{MR5007509}
A.~Gir\~ao, Z.~Hunter, and Y.~Wigderson, Blowups of triangle-free graphs, \emph{Adv. Comb.}  (2025), Paper No. 10, 23pp.

\bibitem{MR5035093}
L.~Gishboliner, A.~Shapira, and Y.~Wigderson, Is it easy to regularize a hypergraph with easy links?, \emph{Int. Math. Res. Not. IMRN}  (2026), Paper No. rnag018, 18pp.

\bibitem{MR1445389}
W.~T. Gowers, Lower bounds of tower type for {S}zemer\'edi's uniformity lemma, \emph{Geom. Funct. Anal.} \textbf{7} (1997), 322--337.

\bibitem{MR2153903}
B.~Green, A {S}zemer\'edi-type regularity lemma in abelian groups, with applications, \emph{Geom. Funct. Anal.} \textbf{15} (2005), 340--376.

\bibitem{2407.19026}
P.~Gupta, N.~Ndiaye, S.~Norin, and L.~Wei, Optimizing the {CGMS} upper bound on {R}amsey numbers, 2024. Preprint available at arXiv:2407.19026.

\bibitem{MR3530502}
K.~Hosseini, S.~Lovett, G.~Moshkovitz, and A.~Shapira, An improved lower bound for arithmetic regularity, \emph{Math. Proc. Cambridge Philos. Soc.} \textbf{161} (2016), 193--197.

\bibitem{MR3048552}
S.~Kalyanasundaram and A.~Shapira, A {W}owzer-type lower bound for the strong regularity lemma, \emph{Proc. Lond. Math. Soc. (3)} \textbf{106} (2013), 621--649.

\bibitem{MR0065617}
T.~K\"{o}vari, V.~T. S\'{o}s, and P.~Tur\'{a}n, On a problem of {K}. {Z}arankiewicz, \emph{Colloq. Math.} \textbf{3} (1954), 50--57.

\bibitem{2601.05221}
R.~Morris, Some recent results in {R}amsey theory, 2026. Preprint available at arXiv:2601.05221.

\bibitem{MR3516883}
G.~Moshkovitz and A.~Shapira, A short proof of {G}owers' lower bound for the regularity lemma, \emph{Combinatorica} \textbf{36} (2016), 187--194.

\bibitem{MR3939561}
G.~Moshkovitz and A.~Shapira, A sparse regular approximation lemma, \emph{Trans. Amer. Math. Soc.} \textbf{371} (2019), 6779--6814.

\bibitem{MR4025519}
G.~Moshkovitz and A.~Shapira, A tight bound for hypergraph regularity, \emph{Geom. Funct. Anal.} \textbf{29} (2019), 1531--1578.

\bibitem{MR2398823}
V.~Nikiforov, Graphs with many copies of a given subgraph, \emph{Electron. J. Combin.} \textbf{15} (2008), Note 6, 6pp.

\bibitem{MR2409174}
V.~Nikiforov, Graphs with many {$r$}-cliques have large complete {$r$}-partite subgraphs, \emph{Bull. Lond. Math. Soc.} \textbf{40} (2008), 23--25.

\bibitem{MR2993136}
V.~R\"{o}dl and M.~Schacht, Complete partite subgraphs in dense hypergraphs, \emph{Random Structures Algorithms} \textbf{41} (2012), 557--573.

\bibitem{MR2718688}
A.~Shapira and R.~Yuster, On the density of a graph and its blowup, \emph{J. Combin. Theory Ser. B} \textbf{100} (2010), 704--719.

\bibitem{MR4149162}
V.~Souza, Blowup {R}amsey numbers, \emph{European J. Combin.} \textbf{92} (2021), Paper No. 103238, 14pp.

\bibitem{MR540024}
E.~Szemer\'edi, Regular partitions of graphs, in \emph{Probl\`emes combinatoires et th\'eorie des graphes ({C}olloq. {I}nternat. {CNRS}, {U}niv. {O}rsay, {O}rsay, 1976)}, \emph{Colloq. Internat. CNRS}, vol. 260, CNRS, Paris, 1978,  399--401.

\bibitem{bourbaki}
Y.~Wigderson, Upper bounds on diagonal {R}amsey numbers [after {C}ampos, {G}riffiths, {M}orris, and {S}ahasrabudhe], \emph{Ast\'erisque} \textbf{462} (2025), 85--138.

\end{thebibliography}

\appendix
\section{Proof of Lemma \ref{lem:regularize-cliques}}\label{sec:appendix}
In this section, we present the proof of \cref{lem:regularize-cliques}. Before doing so, we recall a number of concepts and results related to Szemer\'edi's regularity lemma. Given $\varepsilon \in (0,1)$, a pair of vertex sets $(S,T)$ (not necessarily disjoint) in a graph $F$ is said to be $\varepsilon$-regular if $\ab{d(S',T')-d(S,T)} <\varepsilon$ for all $S' \subseteq S, T' \subseteq T$ with $\ab{S'} \geq \varepsilon \ab S, \ab{T'} \geq \varepsilon \ab T$. Suppose now that $F$ is $r$-partite, with $r$-partition $V=V_1 \sqcup \dotsb \sqcup V_r$. A \emph{cylinder} $K$ is a set of the form $W_1 \times \dotsb W_r$, where $W_i \subseteq V_i$ for all $i \in [r]$. For such a cylinder $K$, let $V_i(K)=W_i$. We say that $K$ is \emph{$\varepsilon$-regular} if $(W_i,W_j)$ is $\varepsilon$-regular for all $1 \leq i<j \leq r$. A \emph{cylinder partition} $\K$ is a partition of $V_1 \times \dotsb \times V_r$ into cylinders, and we say that $\K$ is \emph{$\varepsilon$-regular} if at most an $\varepsilon$-fraction of the $r$-tuples $(v_1,\ldots,v_r) \in V_1 \times \dotsb \times V_r$ are not in $\varepsilon$-regular cylinders of $\K$. The following weak regularity lemma of Duke, Lefmann, and R\"odl says that every $r$-partite graph has an $\varepsilon$-regular cylinder partition with a bounded number of parts. 
\begin{lemma}[Duke--Lefmann--R\"odl]\label{lem:dlr}
	For every $\varepsilon \in (0,\frac 12)$ and integer $r \geq 2$, there exists $\beta=\beta(\varepsilon,r) >0$ such that the following holds. Suppose that $F=(V,E)$ is an $r$-partite graph with an $r$-partition $V_1 \sqcup \dotsb \sqcup V_r$. Then there exists an $\varepsilon$-regular cylinder partition $\K$ of $V_1 \times \dotsb \times V_r$ such that $|V_i(K)|\geq \beta |V_i|$ for each $K \in \K$ and $i \in [r]$. Moreover, we may take $\beta \geq \varepsilon^{r^2 \varepsilon^{-5}}$. 
\end{lemma}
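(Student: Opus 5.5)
We would prove \cref{lem:dlr} by the standard energy-increment argument, adapted to cylinders. The feature that makes it work is that a cylinder partition need not be a product partition: each cylinder can be refined independently of the others. So each round shrinks part sizes by only a factor of about $\varepsilon$, and there is no exponential blowup in the number of parts of a common partition.

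\textbf{Energy and the refinement step.} For a cylinder partition $\K$, define the index
\[
q(\K)=\sum_{K\in\K}\frac{\prod_i\ab{V_i(K)}}{\prod_i\ab{V_i}}\cdot\binom r2^{-1}\sum_{1\le i<j\le r} d(V_i(K),V_j(K))^2 \in[0,1].
\]
Start from the trivial partition $\K_0=\{V_1\times\dotsb\times V_r\}$. If the current $\K$ is not $\varepsilon$-regular, then the irregular cylinders carry more than an $\varepsilon$-fraction of the volume. For each irregular cylinder $K=W_1\times\dotsb\times W_r$, fix one irregular pair $(W_i,W_j)$ with witnesses $S\subseteq W_i$, $T\subseteq W_j$ satisfying $\ab S\ge\varepsilon\ab{W_i}$, $\ab T\ge\varepsilon\ab{W_j}$ and $\ab{d(S,T)-d(W_i,W_j)}\ge\varepsilon$. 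Replace $K$ by the (at most) four cylinders obtained by splitting $W_i$ into $S,W_i\setminus S$ and $W_j$ into $T,W_j\setminus T$; all other coordinates are left alone. Regular cylinders are not touched.

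\textbf{Counting rounds.} Refining a cylinder never decreases any pair's contribution to $q$, by convexity (Cauchy--Schwarz). For the chosen irregular pair, the usual defect form of Cauchy--Schwarz gives a gain of at least $\varepsilon^2\cdot\frac{\ab S\ab T}{\ab{W_i}\ab{W_j}}\ge\varepsilon^4$ times that cylinder's volume fraction. Summing over irregular cylinders and dividing by $\binom r2$, each round increases $q$ by at least $\varepsilon^5/\binom r2\ge 2\varepsilon^5/r^2$. Since $q\le 1$, the process stops after at most $r^2\varepsilon^{-5}/2$ rounds with an $\varepsilon$-regular cylinder partition. If every part produced in a round has size at least $\varepsilon$ (or, after the fix below, $\varepsilon/2$) times its parent, then after at most $r^2\varepsilon^{-5}/2$ rounds we get $\ab{V_i(K)}\ge(\varepsilon/2)^{r^2\varepsilon^{-5}/2}\ab{V_i}\ge \varepsilon^{r^2\varepsilon^{-5}}\ab{V_i}$, using $\varepsilon<\frac12$. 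Hence we may take $\beta\ge\varepsilon^{r^2\varepsilon^{-5}}$.

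\textbf{Main obstacle: tiny complements.} The step I expect to need care is guaranteeing that the complements $W_i\setminus S$ and $W_j\setminus T$ are not tiny, because the witnesses only give lower bounds on $\ab S,\ab T$. The fix I would try first is as follows. If $\ab{W_i\setminus S}<\frac\varepsilon2\ab{W_i}$, then $d(W_i,T)$ is within $\varepsilon/2$ of $d(S,T)$, since the missing mass is small. Therefore $(W_i,T)$ is itself a witness of irregularity with deviation at least $\varepsilon/2$. In that case we split only $W_j$, and treat $W_j\setminus T$ symmetrically. This costs only constant factors in the increment, giving a gain of $\Omega(\varepsilon^4)$ per irregular cylinder. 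Every part produced then has size at least $\frac\varepsilon2$ times its parent. The resulting constant losses in the exponent are absorbed by the slack in the stated bound $\varepsilon^{r^2\varepsilon^{-5}}$, possibly after tightening the constant in the energy increment.
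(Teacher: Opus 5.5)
The paper does not prove this lemma; it quotes it from Duke, Lefmann, and R\"odl. Your argument is the standard proof of it: an index increment over cylinder partitions, where each irregular cylinder is refined independently along a single witnessing pair, so parts shrink by a bounded factor per round instead of multiplying. The overall structure is sound, including your fallback to a one-sided split when a complement is small.

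The one claim that is not right as stated is the closing sentence, that constant-factor losses in the increment are ``absorbed by the slack.'' There is essentially no slack.

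\begin{itemize}
\item In your own accounting, the inequality $(\varepsilon/2)^{1/2}\ge\varepsilon$ is tight at $\varepsilon=\frac12$.
\item Even counting more carefully (fewer than $\binom r2\varepsilon^{-5}$ rounds), an increment of only $\varepsilon^4/2$ per irregular cylinder gives $\beta\ge(\varepsilon/2)^{2\binom r2\varepsilon^{-5}}$.
\item This is already smaller than $\varepsilon^{r^2\varepsilon^{-5}}$ at $r=3$, $\varepsilon=0.3$. In general, an increment of $c\varepsilon^4$ suffices only if $1/c\le\frac{r}{r-1}\cdot\frac{2\log(1/\varepsilon)}{1+\log(1/\varepsilon)}$, and this bound tends to $1$.
\end{itemize}

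So the stated value of $\beta$ requires the one-sided split to lose nothing in the increment, and it does once computed exactly. Suppose you split only $W_j$ into $T$ and $W_j\setminus T$, with $x=\ab T/\ab{W_j}\ge\varepsilon$ and $\delta=\ab{d(W_i,T)-d(W_i,W_j)}>\varepsilon/2$. Then the $(i,j)$-term of the index rises by exactly $\frac{x}{1-x}\delta^2$ times the cylinder's volume. This quantity satisfies $\frac{x}{1-x}\delta^2>\frac{\varepsilon^3}{4(1-\varepsilon)}\ge\varepsilon^4$, because $4\varepsilon(1-\varepsilon)\le 1$.

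You should also state why the one-sided split is always available, i.e.\ why $W_i\setminus S$ and $W_j\setminus T$ cannot both be smaller than an $\varepsilon/2$ fraction. If they were, $S\times T$ would cover more than a $(1-\varepsilon/2)^2>1-\varepsilon$ fraction of $W_i\times W_j$. That would force $\ab{d(S,T)-d(W_i,W_j)}<\varepsilon$, contradicting the choice of witnesses.

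With these two points:
\begin{itemize}
\item every irregular cylinder gains at least $\varepsilon^4$ times its volume;
\item there are fewer than $\binom r2\varepsilon^{-5}$ rounds;
\item every new part keeps at least an $\varepsilon/2$ fraction of its parent;
\item $(\varepsilon/2)^{\binom r2\varepsilon^{-5}}\ge\varepsilon^{r^2\varepsilon^{-5}}$ for $\varepsilon<\frac12$, since $\frac{r-1}{2r}\bigl(1+\log\frac1\varepsilon\bigr)<\log\frac1\varepsilon$.
\end{itemize}
This gives the stated bound on $\beta$.
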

We will also need the following standard counting lemma.
\begin{lemma} \label{lem:counting}
	Let $H$ be a graph on vertex set $[t]$. Suppose $V_1,\ldots,V_{t}$ are (not necessarily distinct) vertex sets in a graph $G$ such that $(V_i,V_j)$ is $\varepsilon$-regular for all $\{i,j\} \in E(H)$. Then the number of tuples $(v_1,\ldots,v_t) \in V_1 \times \dotsb \times V_t$ such that $\{v_i,v_j\} \in E(G)$ whenever $\{i,j\} \in E(H)$ is
	\[
		\left( \prod_{\{i,j\} \in E(H)} d(V_i,V_j) \pm \varepsilon \binom t2 \right) \prod_{i=1}^t \ab{V_i}.
	\]
\end{lemma}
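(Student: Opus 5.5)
The plan is to induct on the number of edges of $H$, peeling off one edge at a time. For a graph $H'$ on $[t]$, write $\hom(H')$ for the number of tuples $(v_1,\dots,v_t)\in V_1\times\dotsb\times V_t$ with $\{v_i,v_j\}\in E(G)$ for every $\{i,j\}\in E(H')$. I will prove the stronger statement
\[
	\Bigl|\hom(H')-\prod_{\{i,j\}\in E(H')} d(V_i,V_j)\prod_{i=1}^t \ab{V_i}\Bigr| \leq \varepsilon\, e(H')\prod_{i=1}^t\ab{V_i}
\]
for every subgraph $H'\subseteq H$ on $[t]$. The lemma then follows because $e(H)\leq\binom t2$.

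The base case $e(H')=0$ is trivial, since then $\hom(H')=\prod_i\ab{V_i}$. For the inductive step, pick an edge $\{a,b\}\in E(H')$ and set $H''=H'-\{a,b\}$. The key claim is
\[
	\ab{\hom(H')-d(V_a,V_b)\hom(H'')}\leq \varepsilon\prod_i\ab{V_i}.
\]
To prove it, fix the coordinates $(v_i)_{i\neq a,b}$ and assume they satisfy all constraints of $H''$ not involving $a$ or $b$. Since $H$ is simple, the only constraint between $v_a$ and $v_b$ is the edge $\{a,b\}$ itself. Hence the admissible $v_a$ form a set $S\subseteq V_a$, determined by adjacency to the fixed $v_i$ with $\{i,a\}\in E(H'')$, and similarly the admissible $v_b$ form a set $T\subseteq V_b$. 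For this fixed choice, the contribution to $\hom(H'')$ is $\ab S\ab T$ and the contribution to $\hom(H')$ is $e(S,T)$. Here $e(S,T)$ counts ordered pairs $(x,y)\in S\times T$ with $xy\in E(G)$, which is the right convention when $V_a=V_b$ or the sets overlap; the regularity definition in this appendix is stated for arbitrary pairs, so nothing changes.

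The pointwise estimate splits into two cases.
\begin{itemize}
	\item If $\ab S\geq\varepsilon\ab{V_a}$ and $\ab T\geq\varepsilon\ab{V_b}$, then $\varepsilon$-regularity of $(V_a,V_b)$ gives $\ab{e(S,T)-d(V_a,V_b)\ab S\ab T}\leq\varepsilon\ab S\ab T\leq\varepsilon\ab{V_a}\ab{V_b}$.
	\item Otherwise, both $e(S,T)$ and $d(V_a,V_b)\ab S\ab T$ lie in $[0,\varepsilon\ab{V_a}\ab{V_b}]$, so their difference is again at most $\varepsilon\ab{V_a}\ab{V_b}$.
\end{itemize}
Summing over the at most $\prod_{i\neq a,b}\ab{V_i}$ choices of the other coordinates gives the claim.

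To finish the induction, apply the hypothesis to $H''$, multiply by $d(V_a,V_b)\in[0,1]$ (which does not increase the error), and combine with the claim by the triangle inequality. This yields error at most $\varepsilon(e(H'')+1)=\varepsilon\,e(H')$. Note that every edge of $H''$ is still an edge of $H$, so the regularity hypotheses remain available throughout. I expect the only point needing care to be the small-set case in the pointwise estimate, together with the bookkeeping for possibly coinciding $V_i$. Both are handled as above, and the rest is routine.
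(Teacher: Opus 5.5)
Your proof is correct. The paper gives no proof of this lemma: it is quoted as a standard counting lemma and used as a black box in the appendix, so there is no argument in the paper to compare yours against.

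What you give is the standard edge-deletion (telescoping) proof, and the details check out:
\begin{itemize}
\item The pointwise estimate in the case $\ab S<\varepsilon\ab{V_a}$ or $\ab T<\varepsilon\ab{V_b}$ is handled correctly.
\item Multiplying the inductive bound by $d(V_a,V_b)\in[0,1]$ is legitimate.
\item You in fact obtain the slightly sharper error $\varepsilon\, e(H)$ in place of $\varepsilon\binom t2$.
\end{itemize}

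Your remark that $e(S,T)$ should count ordered pairs when the $V_i$ overlap is the right convention. It is the one under which the statement holds when some $V_i$ coincide: with unordered edge counts, the case $V_a=V_b$ would be off by a factor of $2$.

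In the paper's only use of the lemma with a repeated set, namely the count $N_{k+2}^-$ where $C$ appears twice, the repeated pair is a non-edge of $K_{k+2}^-$. So regularity of $(C,C)$ is never actually invoked there.
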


\begin{proof}[Proof of \cref{lem:regularize-cliques}]
	Let $\varepsilon = \gamma^2/(9\binom{k+2}2)$. 
	Let $\beta=\beta(\varepsilon,k+1)$ be the constant from \cref{lem:dlr}, and let $\zeta = \beta \gamma/e^{k+1}$. Note that $\zeta \geq \gamma^{k^{50}\gamma^{-10}}$, as claimed.

	We now choose a uniformly random partition of $V(G)$ into $k+1$ parts, by assigning each vertex independently and uniformly to one of the $k+1$ parts. The probability that a given $K_{k+1}$ in $G$ spans the $k+1$ parts is exactly $(k+1)!/(k+1)^{k+1}$. Linearity of expectation then implies that there exists a $(k+1)$-partite subgraph of $G$ with parts $V_1,V_2,\ldots,V_{k+1}$ which contains at least $\gamma (k+1)! (N/(k+1))^{k+1} \geq \gamma(k+1)! \prod_{i=1}^{k+1} \ab{V_i}$ copies of $K_{k+1}$. This in particular implies that $\min_i\ab{V_i}\geq \gamma(k+1)! N/(k+1)^{k+1}\geq \gamma N/e^{k+1}$, since the total number of $K_{k+1}$ in this subgraph is at most $N^k\min_i\ab{V_i}$. Let $F$ be the $(k+1)$-partite subgraph of $G$ with these parts. By \cref{lem:dlr}, there is an $\varepsilon$-regular cylinder partition $\K$ of $V_1 \times \dotsb \times V_{k+1}$, with $\ab{V_i(K)} \geq \flo{\beta \ab{V_i}}\geq \flo{\beta \gamma N/e^{k+1}} = \flo{\zeta N}$ for all $K \in \K$ and $i \in [k+1]$. By \cref{lem:counting}, if $K$ is an $\varepsilon$-regular cylinder, then the number of $K_{k+1}$ contained in $K$ is at most
	\[
		\left(\prod_{1 \leq i<j\leq k+1} d(V_i(K),V_j(K))+ \varepsilon \binom {k+1}2\right) \prod_{i=1}^{k+1} \ab{V_i(K)}.
	\]
	On the other hand, since at most an $\varepsilon$-fraction of the $(k+1)$-tuples in $V_1 \times \dotsb \times V_{k+1}$ are contained in non-$\varepsilon$-regular cylinders, we find that the number of $K_{k+1}$ contained in irregular cylinders is at most $\varepsilon \prod_i\ab{V_i}$. Adding these two facts up over all the cylinders $K \in \K$, we find that the total number of $K_{k+1}$ in $V_1 \times \dotsb \times V_{k+1}$ is at most
	\[
		\varepsilon \prod_{i=1}^{k+1}\ab{V_i}+\sum_{K\text{ $\varepsilon$-regular}}\left(\prod_{1 \leq i<j\leq k+1} d(V_i(K),V_j(K))+ \varepsilon \binom {k+1}2\right) \prod_{i=1}^{k+1} \ab{V_i(K)}.
	\]
	On the other hand, we also know that this number is at least $\gamma(k+1)!\prod_i \ab{V_i}$. Therefore, we find some $\varepsilon$-regular cylinder $K$ which satisfies
	\begin{equation}\label{eq:prod-density-lb}
		\prod_{1 \leq i<j \leq k+1} d(V_i(K),V_j(K)) \geq \gamma(k+1)!-\varepsilon - \varepsilon\binom{k+1}2 \geq  \gamma ((k+1)!-1) ,
	\end{equation}
	by our choice of $\varepsilon$.
	We can then rewrite (\ref{eq:prod-density-lb}) as
	\begin{equation}\label{eq:density-avg}
		\prod_{i=1}^{k+1} \left( \prod_{\substack{1 \leq j \leq k+1\\j \neq i}} d(V_i(K),V_j(K)) \right) ^{1/2} \geq\gamma ((k+1)!-1) .
	\end{equation}
	Suppose without loss of generality that the parenthesized term in (\ref{eq:density-avg}) is maximized for $i =k+1$. This implies that
	\begin{equation}\label{eq:X-dens-lb}
		\prod_{j=1}^{k} d(V_j(K),V_{k+1}(K)) \geq {\left( \gamma ((k+1)!-1) \right)^{2/(k+1)}} \geq \frac 52 \gamma^{2/(k+1)},
	\end{equation}
	where the final inequality uses that $((k+1)!-1)^{2/(k+1)}$ is an increasing function of $k$, and its value at $k=2$ is $5^{2/3}>5/2$. We now set $C=V_{k+1}(K)$ and $A_i=V_i(K)$ for $1 \leq i \leq k$, recalling that we proved that all these sets have size at least $\flo{\zeta N}$, as needed.
	We note that by \eqref{eq:prod-density-lb} and our choice of $\varepsilon$, we have that 
	\begin{align}
		\varepsilon \binom{k}2 &\leq \frac 1{9} \prod_{1 \leq i<j \leq k} d(A_i,A_j),\label{eq:eta-bound1}\\  
		\varepsilon \binom {k+1}2 &\leq \frac 1{9}\left( \prod_{1 \leq i<j\leq k} d(A_i,A_j) \right) \left( \prod_{i=1}^k d(A_i,C) \right), &\text{ and} \label{eq:eta-bound2}\\ 
		\varepsilon \binom{k+2}2 &\leq \frac 1{9} \left( \prod_{1 \leq i<j \leq k} d(A_i,A_j) \right) \left( \prod_{i=1}^{k} d(A_i,C)^2 \right).\label{eq:eta-bound3}
	\end{align}
	Indeed, the final inequality follows immediately from \eqref{eq:prod-density-lb}, \eqref{eq:X-dens-lb}, and our choice of $\varepsilon$, and it immediately implies the other two.

	Let $N_{k}$ denote the number of labeled $K_{k}$ with one vertex in each of $A_1,\ldots,A_{k}$, and let $N_{k+1}$ denote the number of labeled $K_{k+1}$ with one vertex in each of $A_1,\ldots,A_k,C$. Additionally, let $K_{k+2}^-$ denote $K_{k+2}$ with an edge deleted, and let $N_{k+2}^-$ denote the number of labeled homomorphic copies of $K_{k+2}^-$ such that the two non-adjacent vertices are in $C$, and the $k$ remaining vertices are in $A_1,\ldots,A_{k}$, respectively.

	Let $\Q$ be a uniformly randomly chosen $K_{k}$ among $A_1,\ldots,A_{k}$, and let $\Z$ denote the number of extensions of $\Q$ in $C$, i.e.\ the number of vertices $c \in C$ such that $\Q \cup \{c\}$ is a $K_{k+1}$. We observe that
	\[
		\E[\Z] = \frac{1}{N_{k}}\sum_{Q}\ab{\{c \in C:Q \cup \{c\}\text{ is a }K_{k+1}\}} = \frac{N_{k+1}}{N_{k}},
	\]
	where the sum is over all choices of $\Q$, that is all cliques $Q$ among $A_1,\ldots,A_{k}$. Similarly, we have that
	\[
		\E[\Z^2] = \frac{1}{N_{k}} \sum_Q \ab{\{c_1,c_2 \in C:Q \cup \{c_1\}, Q \cup \{c_2\}\text{ are both }K_{k+1}\}} = \frac{N_{k+2}^-}{N_{k}}.
	\]
	By Lemma \ref{lem:counting} and the bounds \eqref{eq:eta-bound1}, \eqref{eq:eta-bound2}, and \eqref{eq:eta-bound3}, we have that
	\begin{align*}
		\frac 89 \prod_{1 \leq i<j \leq k} d(A_i,A_j) \prod_{i=1}^{k} \ab{A_i}  \leq N_{k} &\leq \frac {10}{9} \prod_{1 \leq i<j \leq k} d(A_i,A_j) \prod_{i=1}^{k} \ab{A_i}, \\
		 N_{k+1}&\geq \frac 89 \left( \prod_{1 \leq i<j \leq k} d(A_i,A_j) \right)\left( \prod_{i=1}^{k} d(A_i,C) \right) \left( \prod_{i=1}^k \ab{A_i} \right)\ab C\\
		N_{k+2}^- &\leq \frac {10}{9} \left( \prod_{1 \leq i<j \leq k} d(A_i,A_j) \right) \left( \prod_{i=1}^{k} d(A_i,C)^2 \right) \left( \prod_{i=1}^{k} \ab{A_i} \right) \ab{C}^2.
	\end{align*}
	Therefore, we deduce that
	\begin{align*}
		\E[\Z]=\frac{N_{k+1}}{N_{k}} \geq \frac 45 \left( \prod_{i=1}^{k} d(A_i,C) \right) \ab{C} \qquad \text{ and } \qquad
		\frac{\E[\Z]^2}{\E[\Z^2]}&= \frac{N_{k+1}^2}{N_{k} N_{k+2}^-} \geq \frac{16}{25}.
	\end{align*}
	Let
	\[
		\tau = \frac 25 \prod_{i=1}^{k} d(A_i,C) 
	\]
	so that $\tau \geq \gamma^{2/(k+1)}$ by \eqref{eq:X-dens-lb}, and $\tau \leq \frac 25 \leq \frac 12$ by definition. Let $\G$ be the
	$k$-mixed graph on $(\A,C)$ whose edges between $C$ and $A_1\cup \dots \cup A_k$ are all edges of $G$ between these sets, and whose edges in $\A$ consist of all $K_k$ which have at least $\tau\ab C$ extensions in $C$. By definition, we have $\tau_C(\G)\geq \tau \geq \gamma^{2/(k+1)}$.

	To compute $e_\G(\A)$, we note that $e_\G(\A)=N_k \pr(\Z \geq \tau\ab C)$. 	
	By the Paley--Zygmund inequality, we have
	\[
		\pr(\Z \geq \tau \ab{C}) \geq \pr \left( \Z \geq \frac 12 \E[\Z] \right) \geq \frac 14 \frac{\E[\Z]^2}{\E[\Z^2]} \geq \frac{4}{25},
	\]
	and therefore
	\[
		d_\G(\A)= \frac{N_k \pr(\Z \geq \tau \ab C)}{\prod_{i=1}^k\ab{A_i}} \geq \frac{4}{25}\cdot \frac 89 \prod_{1 \leq i<j \leq k} d(A_i,A_j) .
	\]
	This implies that
	\[
		d_\G(\A) \tau \geq \frac{4}{25}\cdot \frac 89 \cdot \frac 25 \left( \prod_{1 \leq i<j \leq k} d(A_i,A_j) \right) \left( \prod_{i=1}^{k} d(A_i,C)  \right) \geq \frac{1}{20} \cdot \gamma ((k+1)!-1) \geq \frac \gamma 4,
	\]
	where the second inequality uses \eqref{eq:prod-density-lb} and the final inequality uses that $k \geq 2$.
\end{proof}

\end{document}